\newtheorem{theorem}{Theorem}[section]
\newtheorem{lemma}[theorem]{Lemma}
\theoremstyle{definition}
\newtheorem{definition}[theorem]{Definition}
\newtheorem{proposition}[theorem]{Proposition}
\newtheorem{corollary}[theorem]{Corollary}
\theoremstyle{remark}
\newtheorem{remark}[theorem]{Remark}
\numberwithin{equation}{section}
\begin{document}

\title{The Classification of Homotopy Classes of Bounded Curvature Paths}

\author{Jos\'{e} Ayala}
\address{FIA, Universidad Arturo Prat, Iquique, Chile}
\email{jayalhoff@gmail.com}

\author{Hyam Rubinstein}
\address{Department of Mathematics and Statistics, University of Melbourne
              Parkville, VIC 3010 Australia}
\email{rubin@ms.unimelb.edu.au}


\subjclass[2000]{Primary 49Q10; Secondary 90C47, 51E99, 68R99}
%
%
%
\keywords{Bounded curvature paths, Dubins paths, path optimization, connected components}
\maketitle

\begin{abstract}

A bounded curvature path is a continuously differentiable piecewise $C^2$ path with bounded absolute curvature that connects two points in the tangent bundle of a surface. In this note we give necessary and sufficient conditions for two bounded curvature paths, defined in the Euclidean plane, to be in the same connected component while keeping the curvature bounded at every stage of the deformation. Following our work in \cite{papera}, \cite{paperb} and \cite{paperc} this work finishes a program started by Lester Dubins in \cite{dubins 2} in 1961.
\end{abstract}


\section{Introduction}

A bounded curvature path corresponds to a $C^1$ and piecewise $C^2$ path lying in ${\mathbb R}^2$ having its curvature bounded by a positive constant and connecting two elements of the tangent bundle ${T \mathbb R}^2$. Throughout the papers \cite{papera}, \cite{paperb} and \cite{paperc} we have developed techniques to answer questions about the connectivity of the spaces of bounded curvature paths and to establish the minimal length elements in these spaces. In 1961 in {\it On Plane Curves with Curvature}, Lester Dubins raised important questions initiating the study of the homotopy classes of bounded curvature paths.  {\it Here we only begin the exploration, raise some questions that we hope will prove stimulating, and invite others to discover the proofs of the definite theorems, proofs that have eluded us} (see \cite{dubins 2} page 471).  This work provides an answer to the principal question raised by Lester Dubins, that is, finding necessary and sufficient conditions for two bounded curvature paths to lie in the same homotopy class. 

 Let $\Gamma(n)$ be the space of bounded curvature paths having fixed initial and final points $\mbox{\sc x,y}\in T{\mathbb R}^2$ and having winding number $n$. We define the winding number by closing up the paths using a fixed path from the final to the initial point in $T{\mathbb R}^2$. By analysing the distances between the points {\sc x} and {\sc y} we obtain four conditions of distances called proximity conditions. The proximity conditions give qualitative insight to understand some of the topological features of $\Gamma(n)$, as for example the number of homotopy classes in $\Gamma(n)$. In the papers \cite{papera} and \cite{paperb} we classified the minimal length elements in $\Gamma(n)$ for all $n$ and all $\mbox{\sc x,y}\in T{\mathbb R}^2$. In particular, in \cite{papera} we characterised the bounded curvature paths that are candidates for being of minimum length via a normalisation process. This is then followed by a reduction process performed on bounded curvature paths to obtain the well known {\sc csc}, {\sc ccc} characterisation as first obtained in \cite{dubins 1} (compare Theorem \ref{embdub}). In \cite{paperb} we characterised the bounded curvature paths of minimal length in $\Gamma(n)$ for all $n$ (compare Theorem \ref{singudub}). This is achieved by using some of the the ideas developed in \cite{papera}. The normalisation and reduction processes are intimately related with the concept of fragmentation of a path and this concept is developed in a way that leaves the winding number of the paths invariant. However, in order to achieve the desired classification, we need to develop a continuity argument keeping track of the curvature bound at any stage of the deformation. These ideas are crucial for our purposes and are developed in Section 3.

In \cite{paperc} we analysed the extent to which paths in $\Gamma(n)$ can be made homotopic via a one parameter family of bounded curvature paths, paying special attention on the proximity condition the points {\sc x} and {\sc y} satisfy. We concluded that under certain conditions it is not possible to continuously deform embedded bounded curvature paths lying in a planar region $\Omega$ to bounded curvature paths lying outside $\Omega$ while keeping the curvature bounded at each state of the deformation  (compare Corollary \ref{mainresultp1}). In particular, the region $\Omega$ is compact and every path in the homotopy class of the minimal length path cannot be homotopic via a one parameter family of bounded curvature paths to a path outside of $\Omega$. 

The ideas described in the previous paragraphs will be essential for giving necessary and sufficient conditions for two bounded curvature paths, defined in the euclidean plane, to be in the same connected component while keeping the curvature bounded at every stage of the deformation. Our main result Theorem  \ref{classificationbcps} can be described informally as follows. Given two directed points $\mbox{\sc x,y}\in T{\mathbb R}^2$, the collection of homotopy classes of bounded curvature paths, starting and ending at {\sc x}, {\sc y}, with winding number $k$, has either one or two elements.  The case of two homotopy classes only occurs for a unique choice of $k$ and for {\sc x}, {\sc y} satisfying a proximity condition D. This condition D is that the points corresponding to $x,y\in {\mathbb R}^2$ are distance less than 4 apart, assuming that the curvature of the paths is bounded by 1. Following our work in \cite{papera}, \cite{paperb} and \cite{paperc} this paper finishes a program started by Lester Dubins in \cite{dubins 2} in 1961.

\section{Preliminaries}
 
Let us denote by $T{\mathbb R}^2$ the tangent bundle of ${\mathbb R}^2$. The elements in $T{\mathbb R}^2$ correspond to pairs $(x,X)$ sometimes denoted just by {\sc x}. As usual, the first coordinate corresponds to a point in ${\mathbb R}^2$ and the second to a tangent vector to ${\mathbb R}^2$ at $x$.

\begin{definition} \label{adm_pat} Given $(x,X),(y,Y) \in T{\mathbb R}^2$, we say that a path $\gamma: [0,s]\rightarrow {\mathbb R}^2$ connecting these points is a {\it bounded curvature path} if:
\end{definition}
 \begin{itemize}
\item $\gamma$ is $C^1$ and piecewise $C^2$.
\item $\gamma$ is parametrized by arc length (i.e $||\gamma'(t)||=1$ for all $t\in [0,s]$).
\item $\gamma(0)=x$,  $\gamma'(0)=X$;  $\gamma(s)=y$,  $\gamma'(s)=Y.$
\item $||\gamma''(t)||\leq \kappa$, for all $t\in [0,s]$ when defined, $\kappa>0$ a constant.
\end{itemize}
Of course, $s$ is the arc-length of $\gamma$.

  The first condition means that a  bounded curvature path has continuous first derivative and piecewise continuous second derivative. For the third condition, without loss of generality, we can extend the domain of $\gamma$ to $(-\epsilon,s+\epsilon)$ for $\epsilon$ arbitrarily small. Sometimes we describe the third item as the endpoint condition. The last condition means that  bounded curvature paths have absolute curvature bounded above by a positive constant. We denote the interval $[0,s]$ by $I$.
  Also, when more than one path is under consideration, we write $\gamma: [0,s_\gamma]\rightarrow {\mathbb R}^2$ to specify arc-length.

\begin{definition} \label{admsp} Given $\mbox{\sc x,y}\in T{\mathbb R}^2$ and a maximum curvature $\kappa>0$. The space of bounded curvature paths satisfying the given endpoint condition is denoted by $\Gamma(\mbox{\sc x,y})$. \end{definition}

It is important to note that the topological and geometrical properties of the space of bounded curvature paths $\Gamma(\mbox{\sc x,y})$ depends on the chosen elements in $T{\mathbb R}^2$. Properties such as compactness, connectedness, as well as the type of minimal length elements in $\Gamma(\mbox{\sc x,y})$, are intimately related with the  endpoint condition.

\begin{remark}\label{coord} In Definition \ref{adm_pat} the curvature is bounded above by a positive constant. This constant can be normalized by choosing suitable scaling via a dilation (contraction) of the plane. Throughout this work we will always consider $\kappa=1$ in Definition \ref{adm_pat}. Moreover, without loss of generality, consider the origin of our orthogonal global coordinate system as the base point $x$ and let $X$ lie in the space generated by the vector $(\frac{\partial}{\partial x})$ in the tangent space $T_x{\mathbb R}^2$ denoted, from now, $\mathbb R^2$.
\end{remark}

  \begin{definition} Let $\mbox{\sc C}_ l(\mbox{\sc x})$ be the unit circle tangent to $x$ and to the left of $X$. An analogous interpretation applies for $\mbox{\sc C}_ r(\mbox{\sc x})$, $\mbox{\sc C}_ l(\mbox{\sc y})$ and $\mbox{\sc C}_ r(\mbox{\sc y})$. These circles are called {\it adjacent circles} and their arcs are called {\it adjacent arcs}. We denote their centers with lower-case letters, so the center of $\mbox{\sc C}_ l(\mbox{\sc x})$ is denoted by $c_l(\mbox{\sc x})$.
  \end{definition}

\begin{remark}\label{bcurvhomot}  We employ the following convention: When a path is continuously deformed under parameter $p$, we reparametrize each of the deformed paths by its arc-length. Thus $\gamma: [0,s_p]\rightarrow {\mathbb R}^2$ describes a deformed path at parameter $p$, with $s_p$ corresponding to its arc-length.
\end{remark}

\begin{definition}  \label{hom_adm} Given $\gamma,\eta \in \Gamma(\mbox{\sc x,y})$. A {\it bounded curvature homotopy}  between $\gamma: [0,s_0] \rightarrow \mathbb R^2$ and $\eta: [0,s_1] \rightarrow \mathbb R^2$ corresponds to a continuous one-parameter family of immersed paths $ {\mathcal H}_t: [0,1] \rightarrow \Gamma(\mbox{\sc x,y})$ such that:
\begin{itemize}
\item ${\mathcal H}_t(p): [0,s_p] \rightarrow \mathbb R^2$ for $t\in [0,s_p]$ is an element of $\Gamma(\mbox{\sc x,y})$ for all $p\in [0,1]$.
\item $ {\mathcal H}_t(0)=\gamma(t)$ for $t\in [0,s_0]$ and ${\mathcal H}_t(1)=\eta(t)$ for $t\in [0,s_1]$.
\end{itemize}
\end{definition}

It is of interest to establish under what conditions a bounded curvature path can be deformed, while keeping all the intermediate paths in $\Gamma(\mbox{\sc x,y})$, to a path of arbitrarily large length.

\begin{definition}  A bounded curvature path is said to be {\it free} if there exists a bounded curvature path arbitrarily long and a bounded curvature homotopy deforming one path into another.
\end{definition}

The next remark summarizes well known facts about homotopy classes of paths on metric spaces. These facts are naturally adapted for elements in $\Gamma(\mbox{\sc x,y})$ for all endpoint conditions. Our convention is to consider $\Gamma(\mbox{\sc x,y})$ together with the $C^1$ metric.

\begin{remark}  For $\mbox{\sc x,y}\in T{\mathbb R}^2$ then,
\end{remark}
\begin{itemize}
\item Two bounded curvature paths are {\it bounded-homotopic} if there exists a bounded curvature homotopy from one path to another.  Such a relation is an equivalence relation.
\item A homotopy class on $\Gamma(\mbox{\sc x,y})$ corresponds to an equivalence class on $\Gamma(\mbox{\sc x,y})$ under the relation described above.
\item The maximal path connected sets of $\Gamma(\mbox{\sc x,y})$ are called the {\it homotopy classes}  (or {\it path connected components}) of $\Gamma(\mbox{\sc x,y})$.
\item The homotopy classes of $\Gamma(\mbox{\sc x,y})$ are nonempty, pairwise disjoint and their union is $\Gamma(\mbox{\sc x,y})$.
\end{itemize}



In the next subsection we collect definitions and results from \cite{paperc}.
\subsection{Spaces of Bounded Curvature Paths}\label{proximity}

 For given $\mbox{\sc x,y}\in T{\mathbb R}^2$, the normalized bounded curvature constraint $|\kappa|\leq 1$ makes the four associated adjacent circles as barriers for local deformations of paths in $\Gamma(\mbox{\sc x,y})$ around $x$ and $y$. We extract four simple pairs of inequalities. These summarize the possible relations of distance between the adjacent circles. We denote by $d$ the Euclidean metric in $\mathbb R^2$. It is easy to see that $d(c_l(\mbox{\sc x}),c_r(\mbox{\sc x}))=2$ and that $d(c_l(\mbox{\sc y}),c_r(\mbox{\sc y}))=2$.

Let an endpoint condition $\mbox{\sc x,y}\in T{\mathbb R}^2$ be given. Taking into account the centres of the associated adjacent circles, four conditions of distance between them can be stated:

 \begin{equation} d(c_l(\mbox{\sc x}),c_l(\mbox{\sc y}))\geq 4 \quad \mbox{and}\quad d(c_r(\mbox{\sc x}),c_r(\mbox{\sc y}))\geq4 \label{con_a}\tag{i}\end{equation}
 \begin{equation} d(c_l(\mbox{\sc x}),c_l(\mbox{\sc y}))< 4 \quad \mbox{and}\quad d(c_r(\mbox{\sc x}),c_r(\mbox{\sc y}))\geq 4 \label{con_b}\tag{ii} \end{equation}
  \begin{equation} d(c_l(\mbox{\sc x}),c_l(\mbox{\sc y}))\geq4 \quad \mbox{and}\quad d(c_r(\mbox{\sc x}),c_r(\mbox{\sc y}))< 4  \label{con_b'}\tag{iii} \end{equation}
   \begin{equation} d(c_l(\mbox{\sc x}),c_l(\mbox{\sc y}))< 4 \quad \mbox{and}\quad d(c_r(\mbox{\sc x}),c_r(\mbox{\sc y}))< 4 \label{con_c}\tag{iv} \end{equation}
   \vspace{.3cm}

 These possible configurations for the centers of the adjacent circles $\mbox{\sc C}_ l(\mbox{\sc x})$, $\mbox{\sc C}_ r(\mbox{\sc x})$, $\mbox{\sc C}_ l(\mbox{\sc y})$, and $\mbox{\sc C}_ r(\mbox{\sc y})$ are illustrated in Figures~\ref{figab} and {\ref{figc}}. Observe that  (ii) and (iii) give the same planar configurations via a reflection (see Figure \ref{figab} bottom). In addition, (iv) implies that $d(x,y)<4$.

{ \begin{figure} [[htbp]
 \begin{center}
\includegraphics[width=.9\textwidth,angle=0]{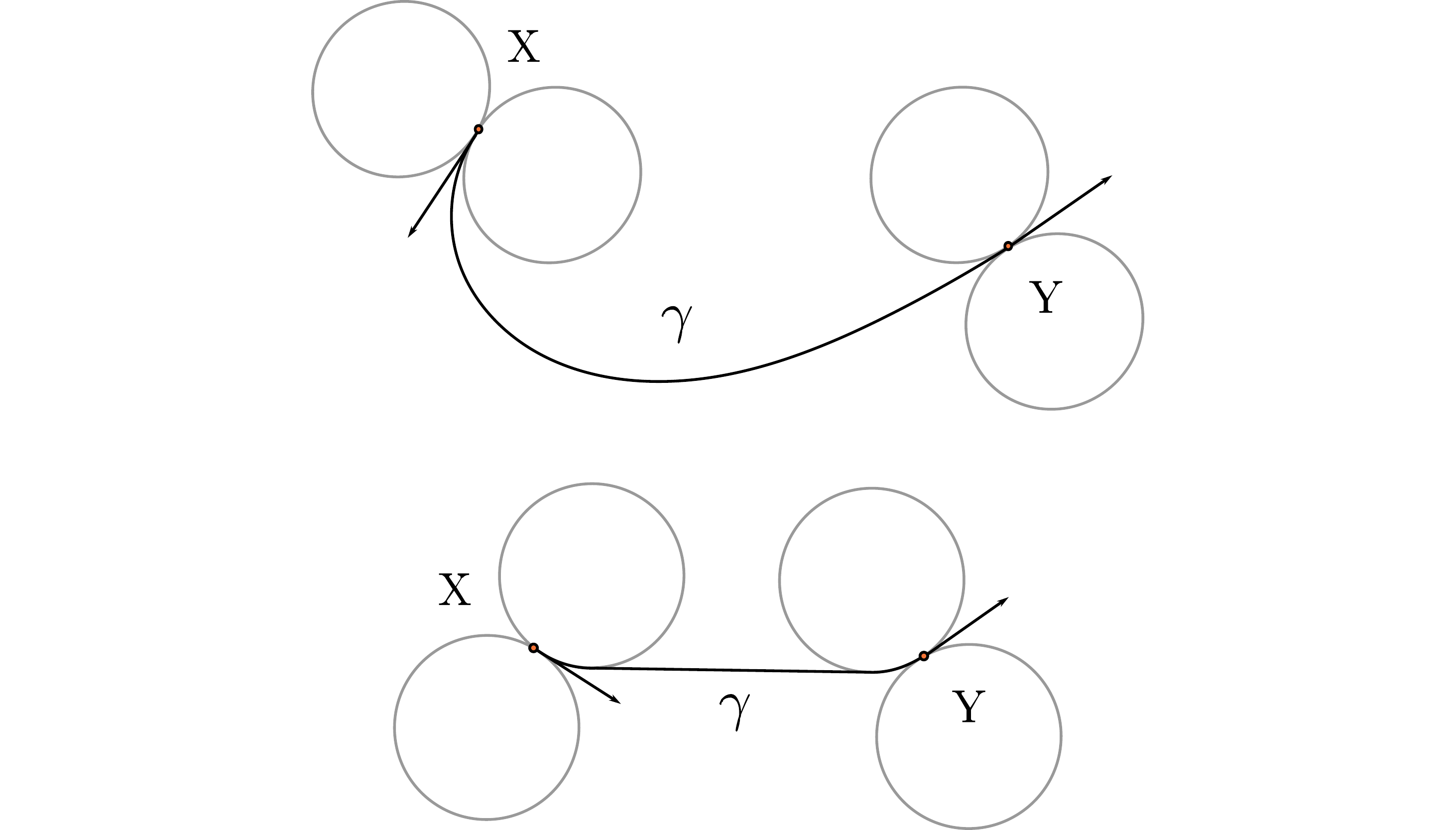}
\end{center}
\caption{Top: An example of configuration given by condition (i). Bottom: An example of configuration given by condition (ii)}
\label{figab}
\end{figure}}

\begin{remark} \label{condcconf} If the given endpoint condition $\mbox{\sc x,y}\in T{\mathbb R}^2$ satisfies proximity condition (iv) we have three possible scenarios (see Figure \ref{figc}):
\begin{itemize}
\item $\mbox{\sc x,y}\in T{\mathbb R}^2$ is the endpoint condition of a path consisting of a single arc of a unit circle of length less than $\pi$ or $\mbox{\sc x,y}\in T{\mathbb R}^2$ is the endpoint condition of a path consisting of a concatenation of two arcs of unit circles each of length less than $\pi$.
\item The curvature bound for elements in $\Gamma(\mbox{\sc x,y})$ induces the existence of a region denoted by $\Omega$ (see Figure \ref{figc} left).
\item $\mbox{\sc x,y}\in T{\mathbb R}^2$ is the endpoint condition of a path bounded-homotopic to a path of arbitrary length.
\end{itemize}
\end{remark}

{{ \begin{figure} [[htbp]
 \begin{center}
\includegraphics[width=1\textwidth,angle=0]{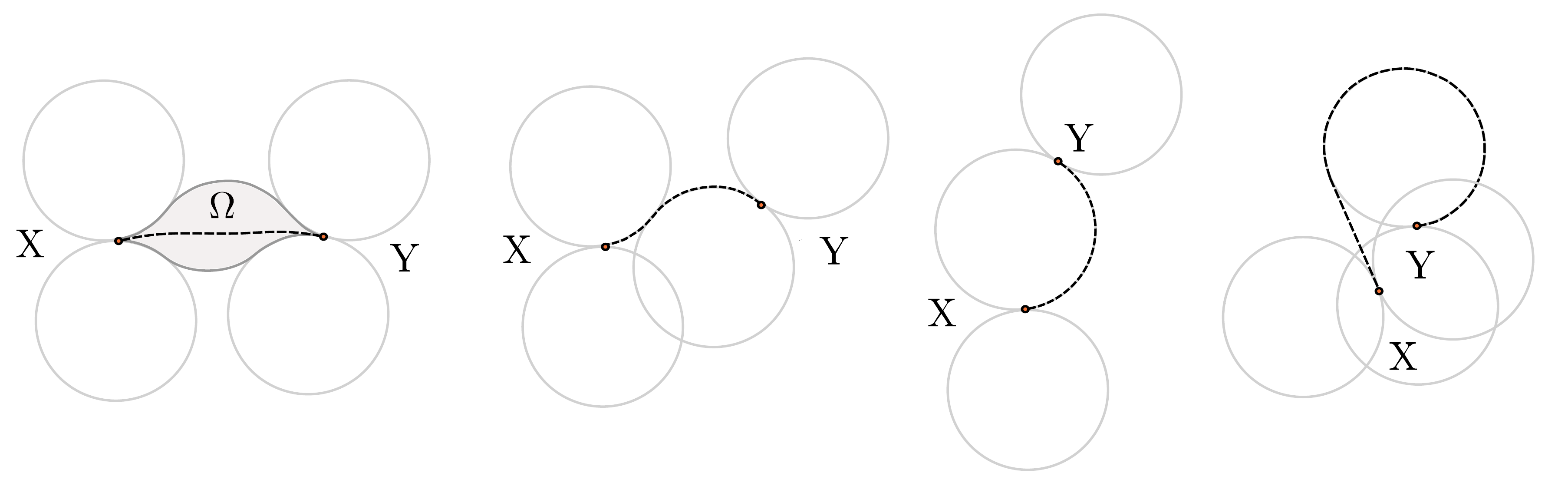}
\end{center}
\caption{The possible scenarios under proximity condition (iv). The dashed trace represents a path in $\Gamma(\mbox{\sc x,y})$.}
 \label{figc}
\end{figure}}}

In order to make the figures clearer, sometimes we omit the initial and final tangent vectors.

\begin{definition} The space $\Gamma({\mbox{\sc x,y}})$ satisfies proximity condition {\sc A} if its endpoint condition satisfies (i).
\end{definition}

\begin{definition}  The space $\Gamma({\mbox{\sc x,y}})$ satisfies proximity condition {\sc B} if its endpoint condition satisfies (ii) or (iii).
\end{definition}

\begin{definition}\label{condcnoreg}  The space $\Gamma({\mbox{\sc x,y}})$ satisfies proximity condition {\sc C} if its endpoint condition satisfies (iv) and $\Gamma({\mbox{\sc x,y}})$ contains a path that has as a subpath:
\begin{itemize}
\item an arc of circle of length greater than or equal to $\pi $ or
\item a line segment of length greater than or equal to $4$.
\end{itemize}
\end{definition}

The next definition is of special importance, it captures features that allow the existence of non-trivial homotopy classes of bounded curvature paths in $\Gamma(\mbox{\sc x,y})$. For details and related consequences on the existence of non trivial homotopy classes of bounded curvature paths refer to \cite{paperc}.

\begin{definition}\label{defd} The space $\Gamma({\mbox{\sc x,y}})$ satisfies proximity condition {\sc D} if its endpoint condition satisfies (iv) and:
\begin{itemize}
\item The curvature bound for elements in $\Gamma(\mbox{\sc x,y})$ induces the existence of a region $\Omega$ or
\item $\Gamma(\mbox{\sc x,y})$ contains a path which is an arc of a unit circle of length less than $\pi $ or
\item $\Gamma(\mbox{\sc x,y})$ contains a path which is a concatenation of two oppositely oriented arcs of unit circles of length less than $\pi $ each.
\end{itemize}
\end{definition}

The three items in Definition \ref{defd} are mutually exclusive. For example, it's easy to see that no endpoint condition simultaneously allow the existence of a path which is an arc of a unit circle of length less than $\pi $ and a path which is a concatenation of two oppositely oriented arcs of unit circles of length less than $\pi $ each.

\begin{definition} \hspace{5cm}
\begin{itemize}
\item A path $\gamma: I \rightarrow {\mathbb R}^2$ is {\it in} $\Omega$ if $\gamma(t) \in \Omega$ for all $t\in I$.
\item A path $\gamma: I \rightarrow \mathbb R^2$ is {\it not in} $\Omega$ if there exists $t\in I$ such that $\gamma(t) \notin \Omega$.
\end{itemize}
\end{definition}

\begin{definition} We denote by $\Delta(\Omega)\subset \Gamma(\mbox{\sc x,y})$ the space of embedded bounded curvature paths in $\Omega$. And, by $\Delta'(\Omega)\subset \Gamma(\mbox{\sc x,y})$ the space of bounded curvature paths not in $\Omega$.
\end{definition}

 It was a relatively long standing problem to find a mathematical argument to prove that embedded bounded curvature paths {\it in} $\Omega$ cannot be made bounded-homotopic to paths not in $\Omega$ compare [7]. In [4] we developed techniques to prove the following results.

\begin{theorem}\label{insbound} (cf. Theorem 8.12 in \cite{paperc}) Paths in $\Delta(\Omega)$ have bounded length.
\end{theorem}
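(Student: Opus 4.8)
The plan is to exploit the fact that $\Omega$ is a compact planar region together with the bounded curvature constraint in order to bound the arc-length of any embedded path lying entirely in $\Omega$. First I would fix, once and for all, a compact $\Omega$ arising from an endpoint condition satisfying proximity condition (iv); by Remark \ref{condcconf} such an $\Omega$ exists precisely in the second scenario, and its boundary is made up of adjacent arcs of the four unit circles $\mbox{\sc C}_l(\mbox{\sc x}), \mbox{\sc C}_r(\mbox{\sc x}), \mbox{\sc C}_l(\mbox{\sc y}), \mbox{\sc C}_r(\mbox{\sc y})$, so $\Omega$ has finite diameter, say bounded by $4$ since $d(x,y)<4$. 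The key geometric input is that a $C^1$, piecewise $C^2$ unit-speed path $\gamma$ with $|\kappa|\le 1$ cannot "turn back on itself" too quickly: over any arc of length $\pi$ the tangent direction can rotate by at most $\pi$, so $\gamma$ restricted to any subinterval of length $\pi$ is a graph over the line in its average direction — in particular it is embedded on such subintervals automatically, and more importantly each such subarc is trapped in a lens-shaped region of controlled diameter.

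Second, I would set up a covering/packing argument. Partition the domain $[0,s_\gamma]$ of an arbitrary $\gamma \in \Delta(\Omega)$ into consecutive subintervals $I_1, \dots, I_N$ each of length exactly $L_0$ for a suitable constant $L_0 < \pi$ (with the last one possibly shorter). On each $I_j$ the path moves a definite Euclidean distance — since the tangent turns by less than $\pi$, the chord length of $\gamma|_{I_j}$ is bounded below by some $\delta(L_0) > 0$ depending only on $L_0$. The heart of the matter is then to show that the midpoints (say) of the segments $\gamma(I_j)$ cannot accumulate: because $\gamma$ is embedded and each subarc is "almost straight," two subarcs $\gamma(I_j)$ and $\gamma(I_k)$ with $|j-k|$ large must be separated in $\mathbb{R}^2$ by a definite distance, or else an embeddedness-violating configuration is forced (two nearly-parallel nearly-straight strands running in opposite directions within a thin tube would have to cross, contradicting that $\gamma$ is an embedded arc). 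This yields a uniform positive lower bound on the area that a tubular neighbourhood of $\gamma$ of some fixed radius $\rho$ must occupy inside a fixed enlargement $\Omega_\rho$ of $\Omega$; since $\mathrm{Area}(\Omega_\rho)$ is finite, $N$ is bounded, hence $s_\gamma = \sum |I_j| \le N L_0$ is bounded by a constant depending only on $\Omega$.

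Third, I would assemble the estimate cleanly: letting $M = M(\Omega)$ denote the resulting bound, every $\gamma \in \Delta(\Omega)$ satisfies $\mathrm{length}(\gamma) \le M$, which is the assertion. I expect the main obstacle to be the embeddedness-to-separation step — quantifying precisely how the constraint $|\kappa|\le 1$ plus injectivity of $\gamma$ forces distinct long-separated subarcs to stay apart in the plane, so that the area/packing bound genuinely closes. This is exactly the sort of delicate planar-topology-meets-curvature argument that the authors flag as having been "a relatively long standing problem," so I anticipate the real proof either invokes the machinery of \cite{paperc} on the structure of $\Omega$ and its foliation by adjacent arcs, or runs a more refined argument tracking how $\gamma$ must repeatedly cross a fixed transversal arc and bounding the number of such crossings via the Jordan curve theorem applied to $\gamma$ together with an arc of $\partial\Omega$. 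The remaining steps (the turning-angle bound, the chord-length lower bound, the final summation) are routine once that separation estimate is in hand.
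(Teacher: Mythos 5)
First, note that the present paper does not prove this statement at all: it is imported verbatim as Theorem 8.12 of \cite{paperc}, so the comparison has to be with that cited argument, which rests on the specific geometry of the region $\Omega$ arising under proximity condition {\sc D}, not on a generic compactness/packing principle.

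Your proposal has a genuine gap at exactly the step you flag as the ``heart of the matter''. The claimed separation estimate --- that two nearly straight subarcs $\gamma(I_j)$, $\gamma(I_k)$ of an embedded curve must stay a definite distance apart, with only the antiparallel case needing a crossing argument --- is false, and with it the area/packing bound collapses. Parallel strands of an embedded curve with $|\kappa|\le 1$ can be arbitrarily close (concentric near-circular windings of a spiral with spacing $\varepsilon$), and antiparallel strands can also be arbitrarily close without crossing (the two legs of a long hairpin whose unit-radius turn is made elsewhere). Indeed the statement you are trying to prove is simply not true at the level of generality your argument uses: an embedded path with curvature bound $1$, both endpoints on the boundary of a disk of radius $3$, can spiral inward with spacing $\varepsilon$, make a single turn of radius $1$ near the centre, and spiral back out interleaved with the inward windings; this stays in a compact region, is embedded, respects $|\kappa|\le 1$, and has length of order $1/\varepsilon$. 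So no proof invoking only compactness of $\Omega$, embeddedness and the curvature bound can close, and the tubular-neighbourhood area count has no uniform lower bound per subarc. What makes the theorem true is the particular shape of $\Omega$ under condition {\sc D}: it is the small region cut out by arcs of the four adjacent unit circles (in particular $d(x,y)<4$), and it cannot accommodate the unit-radius turning a path would need in order to double back and accumulate length inside $\Omega$; this is precisely the structural analysis carried out in \cite{paperc} (and acknowledged there as the resolution of the ``long standing problem'' you mention). Your closing remark that the real proof probably needs the machinery of \cite{paperc} on the structure of $\Omega$ is therefore the actual content of the theorem, and the self-contained packing argument you outline cannot be repaired without importing exactly that structure. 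The routine ingredients you list (turning-angle bound over subarcs of length $<\pi$, chord-length lower bound, final summation) are fine but are not where the difficulty lies.
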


\begin{corollary}\label{mainresultp1} (cf. Corollary 9.2 in \cite{paperc}) Suppose the endpoint condition $\mbox{\sc x,y}\in T{\mathbb R}^2$ carries a region $\Omega$. Then the space $\Gamma(\mbox{\sc x,y)}$ is partitioned into the spaces ${\Delta}(\Omega)$ and ${\Delta}'(\Omega)$. That is, ${\Delta}(\Omega)$ and ${\Delta}'(\Omega)$ belong to different homotopy classes. In particular the elements in ${\Delta}(\Omega)$ are not free paths.
\end{corollary}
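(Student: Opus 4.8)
The plan is to establish the three assertions in turn: the set‑theoretic partition, the separation of $\Delta(\Omega)$ and $\Delta'(\Omega)$ into distinct homotopy classes, and finally the non‑freeness of the elements of $\Delta(\Omega)$. The partition is almost formal: $\Delta(\Omega)\cap\Delta'(\Omega)=\emptyset$ because a path cannot be both in $\Omega$ and not in $\Omega$, while $\Delta(\Omega)\cup\Delta'(\Omega)=\Gamma(\mbox{\sc x,y})$ once one knows that every bounded curvature path whose image lies in $\Omega$ is automatically embedded --- a non‑embedded bounded curvature path contains a self‑crossing loop, which (having total turning $2\pi$ and curvature at most $1$, even if one corner is present) is too large to fit inside the small region $\Omega$. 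Hence every path in $\Gamma(\mbox{\sc x,y})$ lying in $\Omega$ belongs to $\Delta(\Omega)$, and every other one to $\Delta'(\Omega)$.

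For the separation I would argue by contradiction and connectedness. Suppose a bounded curvature homotopy $\mathcal H$ joins $\gamma\in\Delta(\Omega)$ to $\eta\in\Delta'(\Omega)$; write $\gamma_p=\mathcal H_\cdot(p)\colon[0,s_p]\to\mathbb R^2$ and, after rescaling every domain to $[0,1]$, regard $p\mapsto\gamma_p$ as a continuous arc in the space of $C^1$ paths. Put $A=\{p\in[0,1]:\gamma_p\text{ is in }\Omega\}$ and $B=[0,1]\setminus A$, so $0\in A$ and $1\in B$. The set $B$ is open: $\Omega$ is compact, so if $\gamma_{p_0}(t_0)$ lies in the open set $\mathbb R^2\setminus\Omega$ then the corresponding points of all nearby $\gamma_p$ do too. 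To contradict the connectedness of $[0,1]$ it therefore remains to prove that $A$ is open.

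This is the heart of the matter and the only point at which the geometry of $\Omega$ enters. Fix $p_0\in A$. By Theorem~\ref{insbound}, $\gamma_{p_0}$ --- being embedded and in $\Omega$ --- has length at most $L:=\sup\{s_\sigma:\sigma\in\Delta(\Omega)\}<\infty$, and the same bound persists for all $\gamma_q$ with $q\in A$ near $p_0$; so the discussion takes place over a compact family of embedded paths inside the compact region $\Omega$. Near $x$ and $y$ the prescribed endpoints and tangents keep every $\gamma_p$ inside $\Omega$, and away from $x,y$ the boundary $\partial\Omega$ is a finite concatenation of arcs of the adjacent circles $\mbox{\sc C}_l(\mbox{\sc x}),\mbox{\sc C}_r(\mbox{\sc x}),\mbox{\sc C}_l(\mbox{\sc y}),\mbox{\sc C}_r(\mbox{\sc y})$, each of curvature exactly $1$. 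Where $\gamma_{p_0}$ meets such an arc the contact is tangential, and the barrier property of these maximal‑curvature arcs --- established in \cite{paperc} --- prevents a bounded curvature deformation from carrying a point of the path across the arc to the side of $\partial\Omega$ not occupied by $\Omega$ without violating the curvature bound. Combined with the compactness of the family this forces $\gamma_p$ to remain in $\Omega$ for all $p$ near $p_0$, so $p_0$ is interior to $A$. Then $A$ and $B$ are disjoint, nonempty, open and cover $[0,1]$, the required contradiction; hence no homotopy class meets both $\Delta(\Omega)$ and $\Delta'(\Omega)$.

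Non‑freeness is then immediate: if some $\gamma\in\Delta(\Omega)$ were free it would be bounded‑homotopic to a path of length larger than $L$, which by Theorem~\ref{insbound} cannot lie in $\Omega$, hence lies in $\Delta'(\Omega)$ --- contradicting the separation just proved. The main obstacle is clearly the openness of $A$: a precise formulation of the principle that a bounded curvature homotopy cannot drag a point of a path across a unit‑curvature boundary arc of $\Omega$ --- exactly the ``relatively long standing problem'' alluded to above. The decisive ingredient that makes it manageable is the length bound of Theorem~\ref{insbound}, which converts an a priori uncontrolled deformation into one confined to a compact family of embedded paths in $\Omega$, on which the barrier comparison can be applied with uniform estimates.
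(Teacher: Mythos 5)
This corollary is not proved in the present paper at all: it is imported from earlier work (``cf.\ Corollary 9.2 in \cite{paperc}''), and the surrounding text explicitly flags that finding an argument for it ``was a relatively long standing problem'' settled by the techniques of \cite{paperc}. Your proposal therefore has to stand on its own, and at its decisive step it does not. Your connectedness skeleton needs the set $A=\{p:\gamma_p\ \mbox{is in}\ \Omega\}$ to be open, and you justify this by a pointwise ``barrier property'' of the unit-curvature arcs of $\partial\Omega$, attributed to \cite{paperc}. No such local barrier exists: a path of curvature at most $1$ can cross a unit circle transversally (a straight chord does, with curvature $0$), and a sub-arc of a path lying in $\Omega$ and tangent to a boundary arc can be pushed across that arc by a small deformation whose curvature stays below $1$; indeed $\Delta(\Omega)$ and $\Delta'(\Omega)$ contain paths that are arbitrarily $C^1$-close, so nothing local separates them. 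The obstruction is genuinely global --- it requires the endpoint condition, embeddedness and the length bound of Theorem~\ref{insbound} acting in concert --- and that global statement is exactly Corollary 9.2 of \cite{paperc}, i.e.\ the statement you are trying to prove. As written, the heart of your argument is circular, and the compactness remarks do not repair it, since $C^1$-closeness gives no control on the curvature of an interpolating family.

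There is a second, smaller gap in the partition step: for $\Delta(\Omega)\cup\Delta'(\Omega)=\Gamma(\mbox{\sc x,y})$ you must show that every bounded curvature path contained in $\Omega$ is embedded, and you dispose of this with the assertion that a self-crossing loop is ``too large to fit'' in $\Omega$. That is plausible but not proved: the loop created by a self-intersection has a corner, so the usual simple-closed-curve estimates (total turning $2\pi$, Pestov--Ionin type enclosure of a unit disk) need a corner version, and one must then verify that $\Omega$ contains no unit disk; neither is carried out. The final non-freeness deduction from Theorem~\ref{insbound} is fine once the first two points are granted. In short, your reduction of the corollary to (partition) $+$ (separation) $+$ (non-freeness) is sensible, but the separation step --- the actual mathematical content of the result --- is assumed rather than established.
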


\begin{corollary}\label{cannotsing}(cf. Corollary 9.5 in \cite{paperc}) Paths in $\Delta(\Omega)$ cannot be made bounded-homotopic to a bounded curvature paths with self intersections.
\end{corollary}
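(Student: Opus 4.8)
The plan is to obtain the statement as a short logical consequence of Corollary~\ref{mainresultp1} together with the definitions of $\Delta(\Omega)$ and $\Delta'(\Omega)$. Suppose, for contradiction, that some $\gamma\in\Delta(\Omega)$ is bounded-homotopic to a bounded curvature path $\sigma\in\Gamma(\mbox{\sc x,y})$ possessing a self-intersection. Since $\sigma$ is not an embedded path, the very definition of $\Delta(\Omega)$ (the space of \emph{embedded} bounded curvature paths in $\Omega$) forces $\sigma\notin\Delta(\Omega)$.

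Next I would invoke the partition from Corollary~\ref{mainresultp1}: when the endpoint condition carries a region $\Omega$, every element of $\Gamma(\mbox{\sc x,y})$ lies in exactly one of $\Delta(\Omega)$ or $\Delta'(\Omega)$. As $\sigma\notin\Delta(\Omega)$, we conclude $\sigma\in\Delta'(\Omega)$; that is, $\sigma$ is not in $\Omega$. Equivalently, one records the geometric fact underlying this partition: a self-intersecting bounded curvature path contains a simple closed subloop, which by the enclosed-disk (Pestov--Ionin type) argument used in \cite{paperc} bounds a region containing a disk of radius $1$, while $\Omega$ is too thin to contain such a disk; hence a path that self-intersects cannot remain inside $\Omega$.

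Finally, $\gamma\in\Delta(\Omega)$ and $\sigma\in\Delta'(\Omega)$ would then be bounded-homotopic, yet Corollary~\ref{mainresultp1} asserts that $\Delta(\Omega)$ and $\Delta'(\Omega)$ lie in different homotopy classes of $\Gamma(\mbox{\sc x,y})$, whereas bounded-homotopic paths belong to the same homotopy class. This contradiction shows that no path in $\Delta(\Omega)$ can be bounded-homotopic to a bounded curvature path with self-intersections, which is exactly the assertion of Corollary~\ref{cannotsing}.

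The only non-formal ingredient — and the point I would expect to require the most care — is the claim in the middle paragraph that a self-intersecting bounded curvature path cannot be contained in $\Omega$; but this is precisely the content already packaged into the partition statement of Corollary~\ref{mainresultp1} (which in turn rests on Theorem~\ref{insbound}), so once that machinery is available the corollary is essentially immediate. If a self-contained argument were wanted, the work would go into estimating the inradius of $\Omega$ and quoting the subloop/enclosed-disk estimate for bounded curvature curves.
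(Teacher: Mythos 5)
Your proof is correct and follows essentially the route the paper intends: the corollary is quoted from \cite{paperc}, where it is obtained exactly as you argue, by observing that a self-intersecting bounded curvature path cannot remain inside $\Omega$ (the enclosed-disk/inradius fact), hence lies in $\Delta'(\Omega)$, and then invoking the separation of $\Delta(\Omega)$ and $\Delta'(\Omega)$ into different homotopy classes given by Corollary \ref{mainresultp1}. Your caveat that the set-theoretic partition in Corollary \ref{mainresultp1} already encodes the nontrivial fact that a path contained in $\Omega$ must be embedded is exactly the right point to flag, and nothing further is needed.
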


\subsection{cs Paths}
A particularly interesting type of bounded curvature path is the so called {\it cs} path. These {\it cs} paths correspond to multiple concatenations of arcs of a unit circle of nonzero length and line segments. Bounded curvature paths of {\it cs} type are simple since their curvature is constant almost everywhere; such a fact makes the construction of  bounded curvature homotopies easy.

\begin{definition} A path is in {\it cs} {\it form} if it corresponds to a finite number of concatenations of arcs of unit radius circles of nonzero length and line segments. The number of line segments plus the number of circular arcs is called the {\it complexity} of the path. Paths in {\it cs} form are called {\it cs} paths.
\end{definition}

\begin{definition} A {\sc csc} path corresponds to a concatenation of an arc of unit radius and length less than $2\pi$, followed by a line segment, followed by an arc of unit radius and length less than $2\pi$. The three concatenated parts are called {\it components} of the path. Taking into account the path orientation ({\sc r} denotes a clockwise traversed arc and {\sc l} denotes a counterclockwise traversed arc), then the {\sc csc} paths can be presented in four possible words given by {\sc lsl}, {\sc lsr}, {\sc rsr} and {\sc rsl}.
\end{definition}

When possible, it is convenient to work with $cs$ paths since we can perform some deformations on the latter in a way that it is clear that the resulting path is not only a bounded curvature path but also a $cs$ path. Next we introduce a set of operations of $cs$ paths.

\paragraph{Operations of Type I}
In order to perform {\it operations of type I}, we consider a point in the $cs$ path as the rotation axis. Once the rotation axis point is chosen, we twist the pieces of the path on opposite sides of the rotation axis point, clockwise or counterclockwise until we obtain a $cs$ path containing two oppositely oriented loops making a {\it figure 8} shape, (see Figure \ref{figopi}).

{ \begin{figure} [[htbp]
 \begin{center}
\includegraphics[width=0.8\textwidth,angle=0]{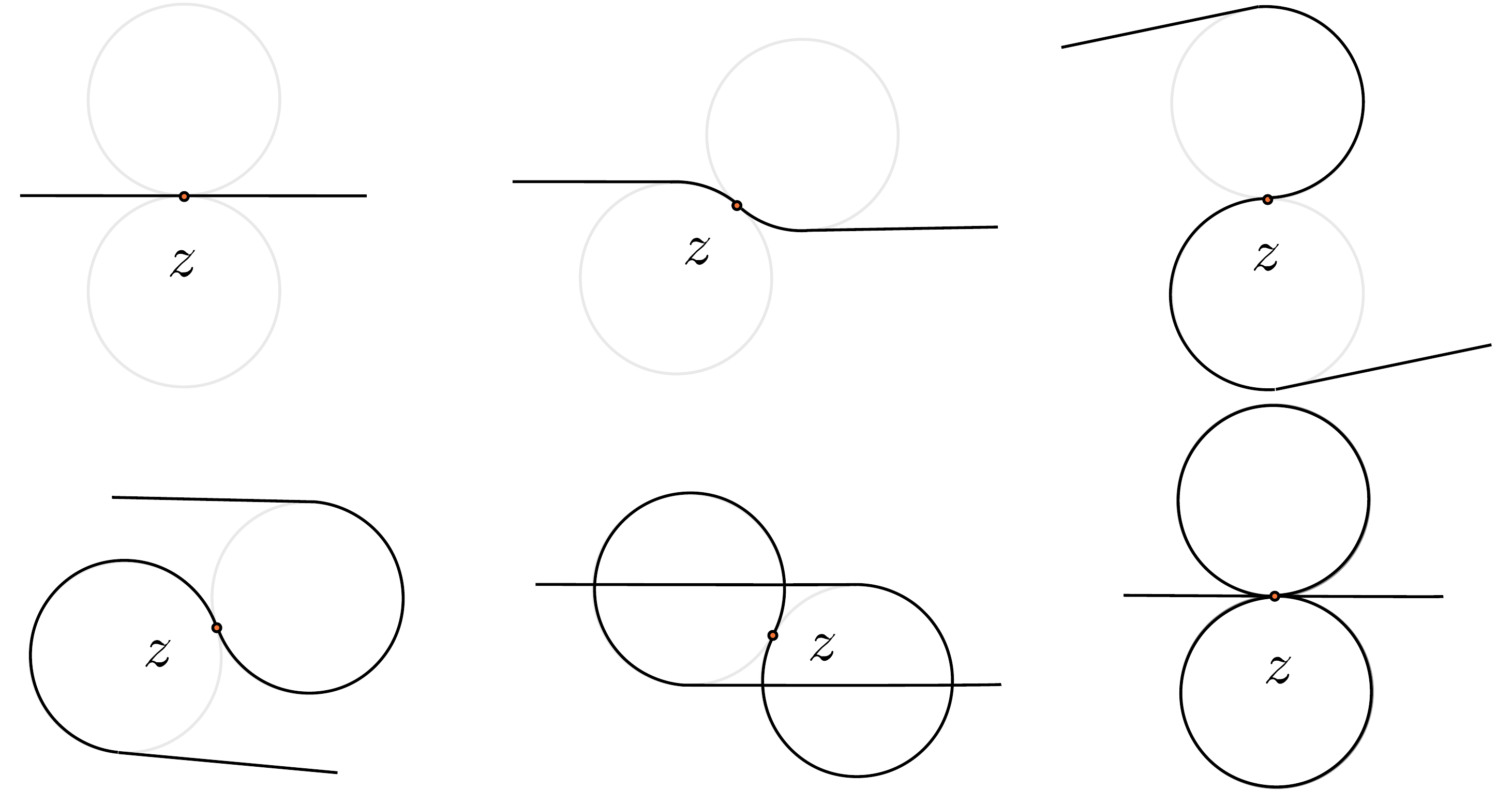}
\end{center}
\caption{A sequence of moves describing an operation of type {\it I} with rotation axis point $z$.}
\label{figopi}
\end{figure}}

\paragraph{Operations of Type II} We apply {\it operations of type II} to $cs$ paths in between {\sc csc} components satisfying proximity conditions A, B or C. We perform the operation by pushing the line segment component (up or down) with a disk as shown in Figure \ref{figopii}.

{ \begin{figure} [[htbp]
 \begin{center}
\includegraphics[width=0.6\textwidth,angle=0]{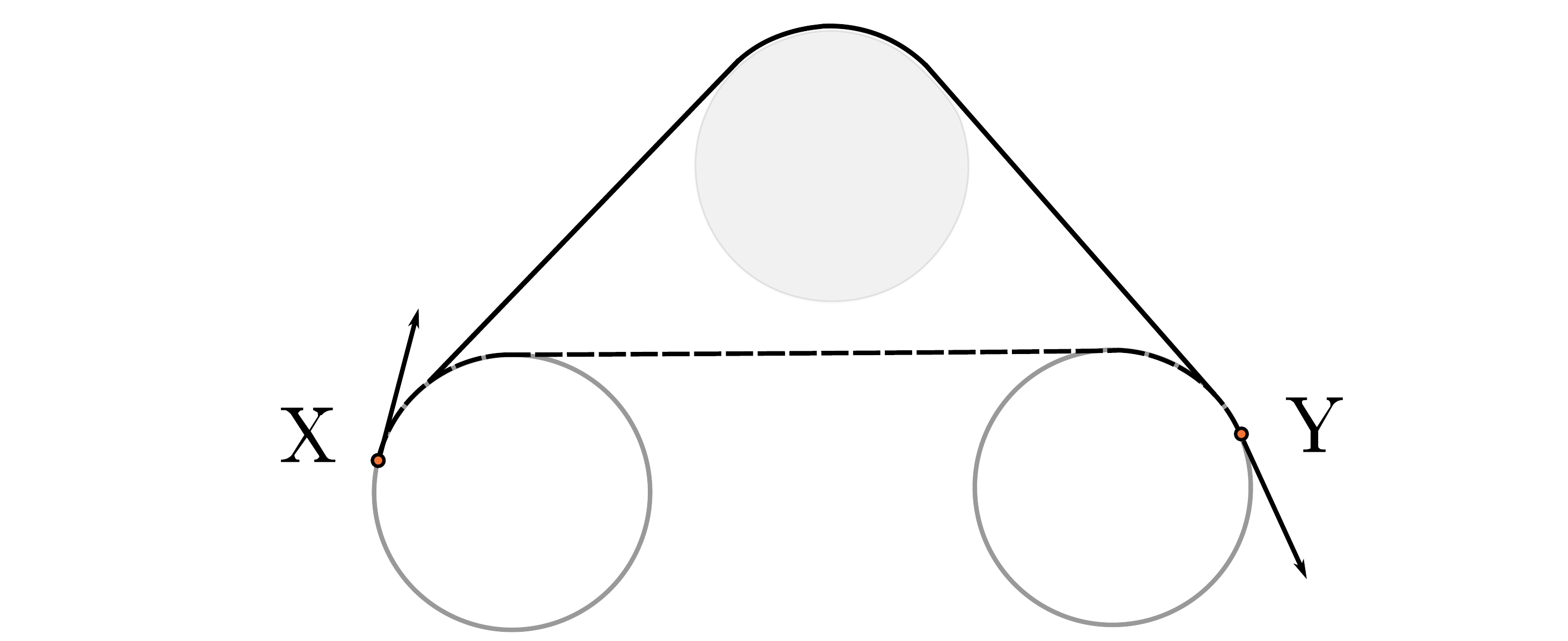}
\end{center}
\caption{The dashed path is continuously deformed to a {\sc cscsc} path.}
 \label{figopii}
\end{figure}}

 \subsection{The Winding Number of Paths} \label{windingnumberofpaths}

In \cite{paperb} we introduced the concept of winding number for elements in $\Gamma(\mbox{\sc x,y})$. In this subsection we refresh some of the terminology needed in this work. Refer to \cite{paperb} for details about this topic.

Consider the exponential map $\exp: {\mathbb R} \rightarrow {\mathbb S}^1$.

\begin{definition} For a path $\gamma:I\to{\mathbb R}^2$. The {\it turning map} $\tau$ is defined in the following diagram,
\[ \xymatrix{ I  \ar[d]_{\tau}   \ar@{>}[dr]^{w} &  \\
                     {\mathbb R} \ar[r]_{\exp}  & {\mathbb S}^1} \]

The map $w:I \rightarrow {\mathbb S}^1$ is called the {\it direction map}, and gives the derivative $\gamma'(t)$ of the path $\gamma$ at $t\in I$. The turning map $\tau:I\rightarrow {\mathbb R}$ gives the turning angle the derivative vector makes at $t\in I$ with respect to $\exp(0)$ i.e., the turning angle $\gamma'(t)$ makes with respect to the $x$-axis. \end{definition}

The following definition is made to obtain an integer invariant for bounded curvature paths that are not necessarily closed.

\begin{definition}The {\it relative winding number} of a bounded curvature path $\gamma : [0,s]\rightarrow {\mathbb R}^2$ is denoted by $\rho(\gamma)$, and is defined by:

 \begin{equation} \rho (\gamma) = \frac{ \tau(s)-z }{2\pi} \label{relwin}\end{equation}

where $z\in(- \pi, \pi]$ satisfies $\exp(z)=\gamma'(s)$.
\end{definition}

Thus the relative winding number of a bounded curvature path corresponds to the number of turns the derivative vector of a path makes as it travels along the path.

 \begin{remark}\label{convlambda} A bounded curvature path $\gamma:[0,s_\gamma]\rightarrow {\mathbb R}^2$ such that $x \neq y$ can be made into a closed path (after a reparametrisation) by concatenating it with a path $ \lambda:[0,s_\lambda]\rightarrow {\mathbb R}^2$ such that:
  \begin{itemize}
 \item $ \lambda(0)=\gamma(s_\gamma)$ and $ \lambda'(0)=\gamma'(s_\gamma)$.
 \item $ \lambda(s_\lambda)=\gamma(0)$ and $ \lambda'(s_\lambda)=\gamma'(0)$.
 \item $\lambda$ is a minimal length bounded curvature path between $(y,Y)$ and  $(x,X)$.
 \item If $\lambda$ is not unique then we choose a preferred one and fix it.
 \end{itemize}
The path $\lambda$ is called the {\it closure path}.
  \end{remark}

In analogy with Whitney's work in \cite{whitney} we introduce the following terminology.

 \begin{definition} For a path $\gamma:I\rightarrow {\mathbb R}^2$, the {\it winding number} with respect to the prescribed closure path $ \lambda$ is defined by:
  \begin{equation} W_\lambda (\gamma)=\frac{\tau(s_\gamma)+\tau(s_ \lambda)}{2 \pi} \label{winumb}\end{equation}
 \end{definition}

\begin{remark}\label{chi}
A theorem of Whitney, see \cite{whitney}, establishes that any curve can be perturbed to have a finite number of transversal self intersections. The number of transversal self intersections of $\gamma$ can be chosen to be minimal and may be denoted by $\chi$. In this work we will develop a continuity argument preserving the bounded curvature property for homotopies between general bounded curvature paths. Then we can adapt Whitney's arguments in \cite{whitney} for bounded curvature paths.\end{remark}

  \begin{definition} Given $\mbox{\sc x,y}\in T{\mathbb R}^2$ together with a prescribed closure path $ \lambda$. The space of bounded curvature paths satisfying the given endpoint condition and having winding number $n$ is denoted by:
$${\Gamma}(n)=\{\gamma \in {\Gamma}(\mbox{\sc x,y}) \,|\,\,\, W_ \lambda(\gamma)=n,\,\,\,n\in {\mathbb Z}\}.$$
Here a closure path is chosen once and for all $n\in {\mathbb Z}$.
We sometimes write ${\Gamma}(\mbox{\rm P},n)$ to specify that the given endpoint condition associated with the spaces $\Gamma(n)$ satisfy proximity condition {\rm P}.
\end{definition}

Note that the orientation of $\lambda$ induces an orientation of the elements on $\Gamma(\mbox{\sc x,y})$. We will consider clockwise orientation as positive and counterclockwise as negative.

\begin{definition}\label{nk} Denote by $\Gamma(k)$ the space containing the global minimum of length in $\Gamma(\mbox{\sc x,y})$.
\end{definition}

The Graustein-Whitney theorem states that two planar closed curves with the same winding number may be deformed one into another by a regular homotopy (homotopy through immersions). In addition, we cannot deform two closed curves of distinct winding numbers one into another by such a homotopy. As a consequence the winding number is a topological invariant for planar curves under regular homotopies, see \cite{whitney}. So, if two paths have different winding numbers, they are in different homotopy classes. Therefore we immediately have from the Graustein-Whitney theorem:

\begin{corollary} \label{wcpm} For every $\mbox{\sc x,y}\in T{\mathbb R}^2$ we have that ${\Gamma}(m)\cap {\Gamma}(n)=\emptyset$ for $m\neq n$.
\end{corollary}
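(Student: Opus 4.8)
The plan is to deduce this directly from the Graustein--Whitney theorem together with the observation that a bounded curvature homotopy is, in particular, a regular homotopy (a homotopy through immersions, as is built into Definition \ref{hom_adm}). First I would remark that disjointness of the families $\Gamma(m)$ and $\Gamma(n)$ for $m\neq n$ is already forced by the definition of $\Gamma(n)$, since $W_\lambda$ assigns to each $\gamma\in\Gamma(\mbox{\sc x,y})$ the single integer $W_\lambda(\gamma)$; the real content to be recorded is that each $\Gamma(n)$ is moreover a union of homotopy classes of $\Gamma(\mbox{\sc x,y})$, so that no bounded curvature homotopy can carry a path from one $\Gamma(n)$ to another.

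Next I would make precise how the winding number behaves along a bounded curvature homotopy $\mathcal H$ between $\gamma$ and $\eta$. Because the closure path $\lambda$ is chosen once and for all, and the endpoints and endpoint tangents $\mbox{\sc x},\mbox{\sc y}$ are fixed along $\mathcal H$, concatenating each intermediate path $\mathcal H_t(p)$ with $\lambda$ yields a one-parameter family of closed, $C^1$ and piecewise $C^2$, immersed plane curves, i.e.\ a regular homotopy in the sense of Whitney. The turning number of a closed immersed plane curve is $\tfrac{1}{2\pi}$ times its total turning, which by the theorem on turning tangents is an integer, and by \eqref{winumb} this integer is exactly $W_\lambda(\mathcal H_t(p))$. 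The Graustein--Whitney theorem asserts that this integer is a regular-homotopy invariant, hence constant in $p$. Thus $W_\lambda(\gamma)=W_\lambda(\eta)$ whenever $\gamma$ and $\eta$ are bounded-homotopic, and more generally whenever they lie in the same path component of $\Gamma(\mbox{\sc x,y})$.

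Finally I would conclude: if $\gamma\in\Gamma(m)$ and $\eta\in\Gamma(n)$ with $m\neq n$, then $\gamma$ and $\eta$ cannot be bounded-homotopic, so $\Gamma(m)\cap\Gamma(n)=\emptyset$, and in fact each $\Gamma(n)$ is a union of homotopy classes of $\Gamma(\mbox{\sc x,y})$.

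The only genuine subtlety, and the step I would handle most carefully, is the bookkeeping needed to confirm that a bounded curvature homotopy of $\gamma$ really induces a \emph{regular} homotopy of the closed-up curve $\gamma*\lambda$: one must check that reparametrising each deformed path by arc length (Remark \ref{bcurvhomot}) keeps the family continuous in the $C^1$ topology across the concatenation point, that the junction with the fixed $\lambda$ remains $C^1$ because the endpoint tangents agree throughout the deformation, and that immersedness is preserved at every stage, which is precisely the requirement imposed in Definition \ref{hom_adm}. Once this is verified, the appeal to Whitney's theorem is immediate and the corollary follows.
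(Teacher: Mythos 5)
Your proposal is correct and takes essentially the same route as the paper: close the paths up with the fixed closure path $\lambda$ and invoke the Graustein--Whitney theorem, so the winding number $W_\lambda$ is invariant under regular (hence bounded curvature) homotopies and paths with different winding numbers lie in different homotopy classes. Your added observations --- that set-theoretic disjointness of the $\Gamma(n)$ is immediate from the definition of $W_\lambda$, the genuine content being that each $\Gamma(n)$ is a union of homotopy classes, and that one must check the closed-up family is indeed a regular homotopy --- are sound refinements of exactly the argument the paper gives.
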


Next we collect definitions and results in \cite{papera}.

\subsection{Normalization of Bounded Curvature Paths}

 The main goal in \cite{papera} was to answer the following question. Given $\mbox{\sc x,y}\in T{\mathbb R}^2$. Characterize the minimal length elements in $\Gamma(\mbox{\sc x,y})$. To this end we developed a normalization procedure applied to general bounded curvature paths. The idea is to subdivide a given bounded curvature path into pieces such that a path replacement performed on such pieces only leads {\sc csc} paths. This process applied to each piece of the path gives a $cs$ path. Then we concluded that the minimal length element in $\Gamma(\mbox{\sc x,y})$ is a $cs$ path of at most complexity three. In this section we invoke results, definitions and techniques introduced in \cite{papera}.

In the context of the next  definition we use the following notation. For a path $\gamma:I \rightarrow {\mathbb R}^2$ we denote its length by ${\mathcal L}(\gamma)$. The length of $\gamma$ restricted to $[a,b]\subset I$ is denoted by ${\mathcal L}(\gamma,a,b)$.

\begin{definition}\label{frag} A {\it fragmentation} of a bounded curvature path $\gamma:I \rightarrow {\mathbb R}^2$ corresponds to a finite sequence $0=t_0<t_1\ldots <t_m=s$ of elements in $I$ such that,
$${\mathcal L}(\gamma,t_{i-1},t_i)<  1 $$
with,
$$\sum_{i=1}^m {\mathcal L}(\gamma,t_{i-1},t_i) =s$$
We denote by a {\it fragment}, the restriction of $\gamma$ to the interval determined by two consecutive elements in the fragmentation.

\end{definition}

\begin{remark}\label{fragd} Is immediate from the Definition \ref{defd} and Definition \ref{frag} that fragments satisfy proximity condition {\sc D}.
\end{remark}

\begin{definition}\label{rzlip} Let ${\mbox{\sc z}}\in T{\mathbb R}^2$ with $\mbox{\sc z}=(z,Z)$ and first component corresponding to the origin of a local coordinate system with abscissa $x$ and ordinate $y$ and $Z$ being horizontal in the positive $x$ direction. Denote by ${\mathcal R}(\mbox{\sc z})$ the region enclosed by the complement of the union of the interior of the disks with boundary $\mbox{\sc C}_l(\mbox{\sc z})$ and $\mbox{\sc C}_r(\mbox{\sc z})$ intersected with a unit radius disk centered at $z$.
\end{definition}

The next result gives a natural region where every bounded curvature path is locally confined.

\begin{proposition} \label{r1r3pr} (cf. Proposition 2.7 in \cite{papera}) Any fragment $\gamma$ with $\gamma(t)=z$ and $\gamma'(t)=Z$ is contained in ${\mathcal R}(\mbox{\sc z})$ (see Figure \ref{figleavreg}).
\end{proposition}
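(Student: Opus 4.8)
The plan is to show that a fragment $\gamma$ cannot leave the region $\mathcal{R}(\mbox{\sc z})$, which is the intersection of a unit disk centered at $z$ with the complement of the two open adjacent disks at $\mbox{\sc z}$. Since a fragment has length strictly less than $1$ and is parametrized by arc length with $\gamma(t)=z$, every point $\gamma(\bar t)$ satisfies $d(z,\gamma(\bar t)) \le |\bar t - t| < 1$, so $\gamma$ stays inside the unit disk centered at $z$ automatically. Hence the only thing to verify is that $\gamma$ never enters the interior of $\mbox{\sc C}_l(\mbox{\sc z})$ or of $\mbox{\sc C}_r(\mbox{\sc z})$.

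The heart of the argument is a comparison between $\gamma$ and the boundary circles. Without loss of generality work in the local coordinate system of Definition \ref{rzlip}, so $z$ is the origin and $Z=(1,0)$; by symmetry it suffices to rule out penetration of $\mbox{\sc C}_l(\mbox{\sc z})$, the unit circle tangent to $\gamma$ at $z$ lying to the left, with center $c_l(\mbox{\sc z})=(0,1)$. Suppose for contradiction that $\gamma(\bar t)$ lies in the open disk bounded by $\mbox{\sc C}_l(\mbox{\sc z})$ for some $\bar t$ in the fragment's domain. Consider the function $g(r) = d(c_l(\mbox{\sc z}), \gamma(r))$; we have $g(t)=1$ (tangency), and $g(\bar t)<1$ by assumption. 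I would differentiate: $g(r)^2 = |\gamma(r)-c_l(\mbox{\sc z})|^2$, so $\tfrac{d}{dr}g(r)^2 = 2\langle \gamma(r)-c_l(\mbox{\sc z}),\,\gamma'(r)\rangle$, which vanishes at $r=t$ since $\gamma'(t)=Z$ is tangent to $\mbox{\sc C}_l(\mbox{\sc z})$ there. The second derivative is $2|\gamma'(r)|^2 + 2\langle \gamma(r)-c_l(\mbox{\sc z}),\,\gamma''(r)\rangle = 2 + 2\langle \gamma(r)-c_l(\mbox{\sc z}),\,\gamma''(r)\rangle$. Using $\|\gamma''\|\le 1$ and $\|\gamma(r)-c_l(\mbox{\sc z})\| < 1$ on the portion of the fragment near $t$ where $g<1$ would hold, Cauchy–Schwarz gives $\langle \gamma(r)-c_l(\mbox{\sc z}),\,\gamma''(r)\rangle > -1$, so $\tfrac{d^2}{dr^2}g(r)^2 > 0$ there. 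Thus $g(r)^2$ is strictly convex on any interval where it stays below $1$, and since it equals $1$ with zero derivative at $t$, it cannot dip below $1$ — contradiction. I would package this as: let $J$ be the maximal subinterval containing $t$ on which $g\le 1$; on $J^\circ$ the strict convexity forces $g^2$ to have a strict minimum only at $t$ with value $1$, which is absurd unless $J=\{t\}$, but then $g>1$ on a deleted neighborhood, contradicting continuity and $g(\bar t)<1$ for $\bar t$ on the same side.

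The main obstacle is making the convexity argument clean when $\gamma$ is only piecewise $C^2$: the second derivative $\gamma''$ may jump, so I would phrase the monotonicity of $\tfrac{d}{dr}g(r)^2$ via the one-sided derivatives or an integral form, observing that $\tfrac{d}{dr}g(r)^2$ is continuous (since $\gamma\in C^1$) and has nonnegative — indeed strictly positive — derivative wherever $\gamma''$ is defined and $g<1$, hence is strictly increasing across $t$; combined with its vanishing at $t$ this gives $\tfrac{d}{dr}g^2 < 0$ for $r<t$ and $>0$ for $r>t$ on the relevant interval, so $g^2$ strictly decreases then increases away from its value $1$ at $t$, again the contradiction. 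The other point needing care is the boundary case $\langle\gamma(r)-c_l(\mbox{\sc z}),\gamma''(r)\rangle = -1$, which by the equality case of Cauchy–Schwarz forces $\|\gamma(r)-c_l(\mbox{\sc z})\|=1$ exactly, i.e. $\gamma(r)\in\mbox{\sc C}_l(\mbox{\sc z})$; so on the open region $g<1$ the inequality is strict, which is all we need. Finally I note the symmetric statement for $\mbox{\sc C}_r(\mbox{\sc z})$ follows verbatim with $c_r(\mbox{\sc z})=(0,-1)$, completing the proof that $\gamma\subset\mathcal{R}(\mbox{\sc z})$.
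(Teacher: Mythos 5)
There is a genuine gap in the second (and central) part of your argument. Your convexity computation only controls the path on the set where $g<1$, i.e.\ \emph{after} it has already entered the open disk, and your anchor ($g=1$, $(g^2)'=0$) is available only at the tangency parameter $t$. Consequently the argument only excludes the scenario in which $\gamma$ dips into the disk \emph{directly at} $t$ (or after running along the circle with $(g^2)'=0$ at the departure point). It does not exclude the scenario in which $\gamma$ first moves away from $\mbox{\sc C}_l(\mbox{\sc z})$ (where $g>1$ and no convexity holds), returns to the circle at some parameter $a\neq t$, and crosses it transversally with $(g^2)'(a)<0$: inside the disk your inequality only says $(g^2)'$ is increasing, and an increasing function starting from a negative value yields no contradiction before the fragment ends. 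Your ``maximal subinterval $J$ containing $t$ on which $g\le 1$'' does not repair this, because the point $\bar t$ with $g(\bar t)<1$ need not lie in $J$; the step ``$g>1$ on a deleted neighborhood of $t$ contradicts $g(\bar t)<1$'' is false, since $g$ may simply cross the value $1$ transversally in between. A symptom of the gap: apart from the trivial containment in the unit disk about $z$, your argument never uses the hypothesis that a fragment has length less than $1$; if the convexity argument alone were valid it would show that \emph{no} bounded curvature path of any length ever enters its adjacent disks, which is false (a long path can loop around and enter $\mbox{\sc C}_l(\mbox{\sc z})$ from the far side).

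What is missing is precisely the use of the length bound to control the tangent direction, which is what rules out the transversal re-entry. For instance, writing $\gamma'(u)=(\cos\varphi(u),\sin\varphi(u))$ with $\varphi(t)=0$, the curvature bound gives $|\varphi(s)-\varphi(u)|\le |s-u|$, and since a fragment has length less than $1<\pi/2$ one computes, for $s>t$,
\begin{equation*}
\frac{d}{ds}\,\bigl\|\gamma(s)-c_l(\mbox{\sc z})\bigr\|^2
=2\Bigl(\int_t^s \cos\bigl(\varphi(s)-\varphi(u)\bigr)\,du-\sin\varphi(s)\Bigr)
\ \ge\ 2\bigl(\sin(s-t)-\sin\varphi(s)\bigr)\ \ge\ 0,
\end{equation*}
so the distance to $c_l(\mbox{\sc z})$ never drops below its initial value $1$ (and symmetrically for $s<t$ and for $c_r(\mbox{\sc z})$). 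Some argument of this type, exploiting that the total turning along a fragment is small, is unavoidable; your local convexity estimate at the tangency point cannot substitute for it.
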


{ \begin{figure} [[htbp]
 \begin{center}
\includegraphics[width=0.5\textwidth,angle=0]{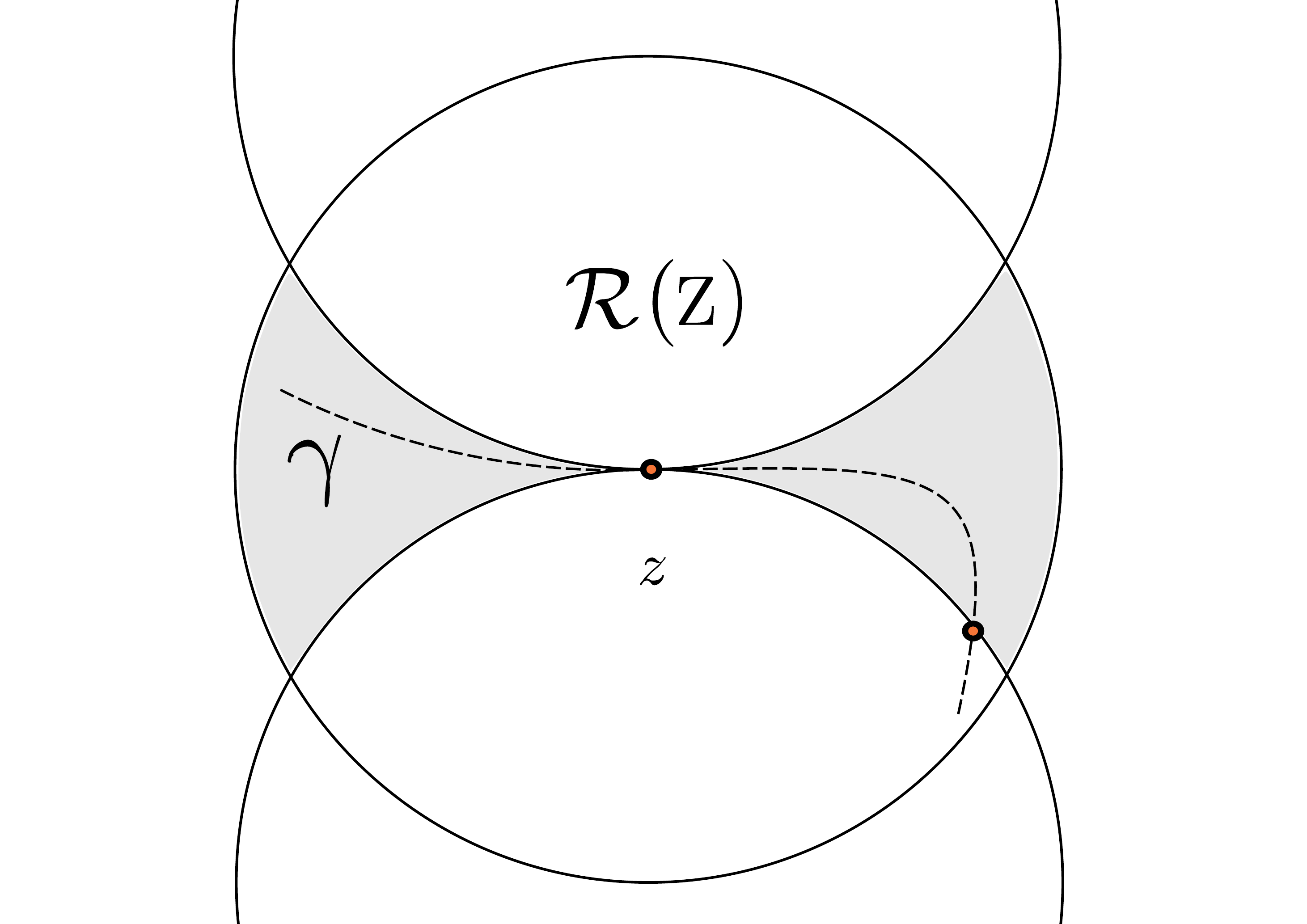}
\end{center}
\caption{A fragment never leaves ${\mathcal R}(\mbox{\sc z})$ through $\mbox{\sc C}_l(\mbox{\sc z})$ or $\mbox{\sc C}_r(\mbox{\sc z})$.}
\label{figleavreg}
\end{figure}}

\begin{proposition} \label{construct}(cf. Proposition 2.10 in \cite{papera}) For a fragment in $\Gamma(\mbox{\sc x,y})$ there exists a {\sc csc} path in $\Gamma(\mbox{\sc x,y})$ having their circular components of length less than $\pi$.
 \end{proposition}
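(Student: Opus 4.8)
The statement to prove is Proposition~\ref{construct}: given a fragment $\gamma \in \Gamma(\mbox{\sc x,y})$, there exists a {\sc csc} path in $\Gamma(\mbox{\sc x,y})$ whose circular components have length less than $\pi$. The plan is to exploit the strong localization provided by Proposition~\ref{r1r3pr}: since $\gamma$ is a fragment, it has length less than $1$, and both its endpoints lie in the small region ${\mathcal R}(\mbox{\sc z})$ associated to $\mbox{\sc z} = (\gamma(0),\gamma'(0))$, and more precisely the entire fragment lies inside ${\mathcal R}(\mbox{\sc z})$. So the question reduces to a local one: for an endpoint condition $\mbox{\sc x,y}$ that arises from a fragment (hence satisfying proximity condition {\sc D} by Remark~\ref{fragd}, with $x$ and $y$ very close together and the tangent directions not too far apart), explicitly construct a {\sc csc} path joining them with short circular arcs.

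First I would set up coordinates as in Remark~\ref{coord} and Definition~\ref{rzlip}, placing $x$ at the origin with $X$ horizontal, and record the constraints on $(y,Y)$ coming from $y \in {\mathcal R}(\mbox{\sc x})$: namely $d(x,y) < 1$, and $y$ lies outside the two adjacent unit disks at $x$. Next, the key geometric step: take the two adjacent circles $\mbox{\sc C}_l(\mbox{\sc y})$ and $\mbox{\sc C}_r(\mbox{\sc y})$ at the terminal point; because $y$ is within distance $1$ of $x$, the centers $c_l(\mbox{\sc y}), c_r(\mbox{\sc y})$ are within distance $1$ of $y$ and hence within distance $2$ of $x$, so each is within distance $3$ of each adjacent circle center at $x$. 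I would then choose an orientation — say build an {\sc rsr} path — and show that one can find a common tangent line segment between $\mbox{\sc C}_r(\mbox{\sc x})$ and an appropriately chosen translate, i.e. verify that the standard Dubins {\sc csc} construction (follow $\mbox{\sc C}_r(\mbox{\sc x})$, then the external common tangent, then $\mbox{\sc C}_r(\mbox{\sc y})$) produces a well-defined path here. The crucial quantitative claim is that, under the fragment hypothesis, the two circular arcs of this {\sc csc} path each subtend an angle less than $\pi$: this follows because the turning of the tangent vector along the whole configuration is controlled — the fragment has small total turning (length $< 1$ forces the turning angle to be less than $1$ radian), and the {\sc csc} path can be chosen so that each arc turns by no more than roughly that amount plus the small angular discrepancy forced by $d(x,y)<1$, which stays well under $\pi$. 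One must also handle the degenerate sub-cases separately: when the direct distance or the tangent directions make the external tangent construction fail, fall back to the case analysis of Remark~\ref{condcconf}/Definition~\ref{defd} — a single short arc, or a concatenation of two short oppositely oriented arcs — each of which is trivially (a degenerate form of) a {\sc csc} path with arcs of length $< \pi$.

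I expect the main obstacle to be the bookkeeping in the case analysis: proximity condition {\sc D} genuinely splits into the three mutually exclusive scenarios of Definition~\ref{defd} (a region $\Omega$ present; a single short-arc endpoint condition; a two-short-arcs endpoint condition), and for the first of these one has to make sure the {\sc csc} path one constructs actually exists (the external common tangent between the two chosen adjacent circles is real, not imaginary) and that it can be oriented so that \emph{both} arcs come out with length $< \pi$ rather than one of them wrapping around by more than $\pi$. Verifying the angle bound rigorously — rather than just "by a picture" — is where the real work lies: it amounts to showing that for $d(c_r(\mbox{\sc x}),c_r(\mbox{\sc y}))$ in the relevant range and the tangent directions constrained by the fragment hypothesis, the two arc angles of the {\sc rsr} path (or {\sc lsl}, chosen appropriately) are each strictly below $\pi$. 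I would isolate this as the one computational lemma, prove it by an elementary trigonometric estimate using $d(x,y) < 1$ and the exclusion from the adjacent disks, and then assemble the cases. The verification that the resulting path lies in $\Gamma(\mbox{\sc x,y})$ (i.e. has curvature $\le 1$ and matches the endpoint data) is immediate from the {\sc csc} structure with unit-radius arcs, so no further difficulty there.
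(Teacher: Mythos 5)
Your overall strategy (localize the endpoint data using Proposition \ref{r1r3pr} and Remark \ref{fragd}, then build a Dubins-type {\sc csc} path on adjacent circles joined by a common tangent, then bound the arc angles using the fragment constraints) is in the same spirit as the construction this paper cites from \cite{papera}. But as written there is a genuine gap: the entire content of the proposition is the quantitative claim that some {\sc csc} word can always be realized with \emph{both} circular components of length less than $\pi$, and you defer exactly that to an unproven ``computational lemma,'' without even specifying how the word is to be chosen from the data. Your provisional default --- ``say build an {\sc rsr} path'' via the external common tangent of $\mbox{\sc C}_r(\mbox{\sc x})$ and $\mbox{\sc C}_r(\mbox{\sc y})$ --- fails already in the simplest case. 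Take the fragment to be an arc of $\mbox{\sc C}_l(\mbox{\sc x})$ of length $t<1$, with $x$ at the origin and $X=(1,0)$: then $c_r(\mbox{\sc x})=(0,-1)$, $c_r(\mbox{\sc y})=(2\sin t,\,1-2\cos t)$, the common tangent direction makes angle $t/2$ with the $x$-axis, and the initial clockwise arc of the {\sc rsr} path must turn through $2\pi-t/2>\pi$. Here one must instead use {\sc lsl}, which degenerates to the fragment itself; symmetrically, for a right-turning arc fragment {\sc lsl} fails and {\sc rsr} works. So the choice of word genuinely depends on the position of $y$ and the direction $Y$, and the case analysis plus the angle estimate is the proof, not a routine afterthought.

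Relatedly, the heuristic you offer for the bound --- that each arc of the replacement path turns ``by roughly the fragment's total turning plus a small discrepancy'' --- is not justified and is not obviously the right statement: the arc angles of a {\sc csc} path are determined by the positions of the adjacent-circle centers and the tangent construction, not directly by the turning of $\gamma$. What the fragment hypothesis actually gives you, and what the estimate must be run on, is that the total turning is less than $1$ radian (so $Y$ is within $1$ radian of $X$ and the $x$-coordinate increases along $\gamma$, putting $y$ strictly ahead of $x$), that $d(x,y)<1$, and that $y$ avoids the open disks bounded by $\mbox{\sc C}_l(\mbox{\sc x})$ and $\mbox{\sc C}_r(\mbox{\sc x})$ (Proposition \ref{r1r3pr}). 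A correct proof must convert these constraints into a selection rule for the word ({\sc lsl}, {\sc rsr}, or a degenerate case as in Definition \ref{defd}) together with an explicit verification that both arc angles are below $\pi$; until that lemma is actually proved, the proposal establishes existence of \emph{a} {\sc csc} path but not the short-arc property that the proposition asserts and that the later fragmentation-homotopy argument (Proposition \ref{homotopyfragment}) relies on.
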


\begin{definition} The path $\beta$ constructed in Proposition \ref{construct} is called a {\it replacement path} (see Figure \ref{figsmallhomot}).

\end{definition}

\begin{proposition}\label{lengthred}(cf. Proposition 3.3 in \cite{papera}) Given $\mbox{\sc x,y} \in T{\mathbb R}^2$. A $\it cs$ path in $\Gamma(\mbox{\sc x,y})$ containing an admissible component as a sub path is bounded-homotopic to another $\it cs$ path in $\Gamma(\mbox{\sc x,y})$ with less complexity being the length of the latter at most the length of the former.
\end{proposition}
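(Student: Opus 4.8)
The plan is to reduce the statement to a purely local one about a short piece of the $cs$ path, and then to write down an explicit bounded curvature homotopy that removes the admissible component. Write the $cs$ path as a concatenation $\gamma=\alpha\ast\mu\ast\beta$, where $\mu$ is the admissible component and $\alpha$, $\beta$ are the (possibly empty) $cs$ subpaths on either side of it. Since $\gamma$ is a $cs$ path, $\mu$ is either a line segment or an arc of a unit circle, and the components of $\alpha$ and $\beta$ adjacent to $\mu$ are themselves arcs or segments whose orientations (using the words {\sc r}, {\sc l}, {\sc s}) fall into finitely many patterns. Using the orientation-reversing reflection symmetry of ${\mathbb R}^2$ and the symmetry interchanging the roles of {\sc x} and {\sc y}, I would first cut the problem down to a short list of representative local configurations, for instance {\sc s}$\mu${\sc s} with $\mu$ an arc, {\sc l}$\mu${\sc l} with $\mu$ a segment, {\sc l}$\mu${\sc r} with $\mu$ a segment, together with the boundary cases in which $\alpha$ or $\beta$ is empty.

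For each such configuration I would exhibit the reducing move explicitly. When $\mu$ is an arc flanked by two line segments, I would roll a unit disk along the concave side of $\mu$, running an operation of type II backwards, pushing the two flanking segments toward each other until two of the circular components coalesce or $\mu$ is absorbed into a neighbour, leaving a {\sc csc}-type piece of strictly smaller complexity. When $\mu$ is a segment between two co-oriented arcs, I would translate $\mu$ parallel to itself, re-fitting the flanking arcs with the same unit radius as the translation proceeds, until $\mu$ degenerates to a point and the two arcs merge into a single arc of length less than $2\pi$. When an operation of type I is first needed to put the local picture into one of these standard forms, I would apply it — it is a bounded curvature homotopy by construction, see Figure~\ref{figopi} — and then proceed as above. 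Throughout, the intermediate paths are visibly $C^1$ and piecewise $C^2$ with curvature bounded by $1$, because every move is assembled from rigid motions of unit-radius arcs and of segments; the endpoints and endpoint tangents of $\gamma$ are never disturbed, so the homotopy stays inside $\Gamma(\mbox{\sc x,y})$; and since it is supported in a contractible neighbourhood of $\mu$ and creates no transversal self-intersection, it cannot change the winding number $W_\lambda$, so it keeps $\gamma$ in the same $\Gamma(n)$.

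For the length estimate I would verify that each elementary move is length non-increasing: sliding a segment until two co-oriented arcs merge does not increase the total turning-plus-translation, which is precisely the length of a $cs$ path, and rolling a disk to shortcut an arc-and-chords configuration likewise only decreases length — in fact this can be read off from Proposition~\ref{construct} together with the fact that the {\sc csc} replacement path of a fragment is built to be no longer than the fragment itself. Composing the local reductions then yields a $cs$ path in $\Gamma(\mbox{\sc x,y})$ of strictly smaller complexity and length at most ${\mathcal L}(\gamma)$. The step I expect to be the genuine obstacle is the bookkeeping in the second paragraph: showing that the entire one-parameter family of moves can be carried out without ever letting the curvature exceed $1$ — the constraint is tight exactly when the flanking circles come close to the adjacent circles at the endpoints of $\mu$ — while simultaneously keeping the length monotone. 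This is precisely where each move has to be described concretely in terms of unit circles and line segments rather than argued in the abstract.
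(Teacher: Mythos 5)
There is a genuine gap: your argument never uses, or even states, what an \emph{admissible} component is, and the proposition is false without that hypothesis. You treat $\mu$ as an arbitrary circular or segment component of the $cs$ path and claim that a local move always removes it with non-increasing length; but Theorem~\ref{embdub} already shows this cannot be true in general, since a minimal length path may be a {\sc ccc} path whose middle arc has length greater than $\pi$ --- that middle {\sc c} component cannot be eliminated without increasing length, so any purported reduction scheme that applies to every component must fail on it. The admissibility condition (defined in \cite{papera}, where Proposition~3.3 is actually proved; the present paper only imports the statement) is precisely what singles out the configurations for which the replacement both lowers complexity and does not increase length, and your case list ({\sc s}$\mu${\sc s}, {\sc l}$\mu${\sc l}, {\sc l}$\mu${\sc r}, \dots) cannot be made correct, or even well specified, without it.

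Beyond that, the two quantitative claims on which your reduction rests are asserted rather than proved. The length estimate ``total turning-plus-translation \dots is precisely the length of a $cs$ path'' is not a correct description of the length, and the appeal to Proposition~\ref{construct} does not help: as stated, that proposition only gives the existence of a {\sc csc} path for a fragment, with no comparison of its length to that of the fragment. And you explicitly defer the verification that the one-parameter families of moves keep the curvature bounded by $1$, which you yourself identify as the genuine obstacle; in \cite{papera} this is handled by describing each replacement concretely in terms of unit circles, tangency conditions and explicit monotonicity of the relevant lengths, not by a general position argument. So what you have is a plausible plan whose two essential steps --- identifying exactly which components can be removed (admissibility) and proving length monotonicity together with the curvature bound for each elementary move --- are the content of the cited proof and are missing here.
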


With a geometric approach the next result proves the well known characterisation due to Dubins (compare \cite{dubins 1}) of the minimal length bounded curvature paths in ${\mathbb R}^2$.

\begin{theorem} \label{embdub}(cf. Theorem 3.9 in \cite{papera}) Choose $\mbox{\sc x,y} \in T{\mathbb R}^2$. The minimal length bounded curvature path in $\Gamma(\mbox{\sc x,y})$ is either a {\sc ccc} path having its middle component of length greater than $\pi $ or a {\sc csc} path where some of the circular arcs or line segments can have zero length.
\end{theorem}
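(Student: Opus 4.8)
The plan is to chain together the normalization machinery of \cite{papera} recalled above with a compactness argument. First I would settle existence of a minimizer. Fix any reference path in $\Gamma(\mbox{\sc x,y})$ (for instance an explicit {\sc csc} or {\sc ccc} joining {\sc x} to {\sc y}, which exists for every endpoint condition), and let $L$ be its length. The set of paths in $\Gamma(\mbox{\sc x,y})$ of length at most $L$, reparametrized by arc length on a common interval $[0,1]$, is a family of uniformly Lipschitz maps with uniformly Lipschitz derivatives (the curvature bound $\kappa=1$ controls $\gamma''$ a.e., hence the modulus of continuity of $\gamma'$), so by Arzel\`a--Ascoli it is precompact in the $C^1$ topology; the endpoint conditions are closed conditions and length is lower semicontinuous under $C^1$ convergence, while the $C^1$ limit of paths with $|\kappa|\le 1$ again has $|\kappa|\le 1$ in the appropriate weak sense. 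Hence the infimum of length over $\Gamma(\mbox{\sc x,y})$ is attained by some $\gamma$.

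Second, I would show $\gamma$ is a $cs$ path. Apply a fragmentation (Definition \ref{frag}) to $\gamma$. Each fragment satisfies proximity condition {\sc D} (Remark \ref{fragd}) and lies in the region $\mathcal R(\mbox{\sc z})$ of Proposition \ref{r1r3pr}; by Proposition \ref{construct} it can be replaced, with the same endpoint data in $T\mathbb R^2$, by a {\sc csc} path whose two circular arcs have length less than $\pi$. Because the endpoint data is preserved, concatenating the replacements produces a path that is still $C^1$ and still in $\Gamma(\mbox{\sc x,y})$ (both the fragment and its replacement stay inside $\mathcal R(\mbox{\sc z})$, so the surgery is genuinely local). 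The replacement never increases length, and if a fragment is not already such a short-arc {\sc csc} the replacement is strictly shorter, contradicting minimality of $\gamma$. Therefore every fragment of $\gamma$ already is a {\sc csc} path with arcs of length less than $\pi$, and $\gamma$ itself is a $cs$ path of some finite complexity.

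Third, I would reduce the complexity. If $\gamma$ contains an admissible component as a subpath, Proposition \ref{lengthred} yields a bounded-homotopic $cs$ path of strictly smaller complexity and length at most that of $\gamma$; by minimality the lengths are equal, so we may replace $\gamma$ by this new $cs$ path and repeat. Since complexity is a nonnegative integer that strictly decreases at each step, the process terminates at a $cs$ path containing no admissible component. A finite case analysis on the admissible words of $cs$ paths (this is where the precise meaning of ``admissible component'' from \cite{papera} is used) shows such a path has complexity at most three, leaving exactly the possibilities {\sc csc} and {\sc ccc}. Finally, a {\sc ccc} whose middle arc has length at most $\pi$ can be shortened to a {\sc csc}, so the minimizer is a {\sc csc} with possibly degenerate pieces, or a {\sc ccc} with middle arc of length greater than $\pi$.

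The step I expect to be the main obstacle is the second one: making the local-to-global passage rigorous. One must verify not merely that \emph{some} {\sc csc} replacement exists on each fragment, but that a non-{\sc csc} fragment admits a \emph{strictly} shorter replacement inside $\mathcal R(\mbox{\sc z})$, so that global minimality is actually contradicted, and that the simultaneous fragment-by-fragment replacements are mutually compatible. The containment statement of Proposition \ref{r1r3pr} and the construction underlying Proposition \ref{construct} are precisely what guarantee both the compatibility and that the surgery never leaves $\Gamma(\mbox{\sc x,y})$; the termination and word-analysis in the third step are comparatively routine once Proposition \ref{lengthred} is in hand.
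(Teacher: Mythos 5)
Your outline is essentially the normalization--reduction strategy that this paper imports from \cite{papera} (the theorem is only quoted here, cf.\ Theorem 3.9 there, and no proof is reproduced in this paper): fragment the path, replace each fragment by a {\sc csc} path with circular arcs of length less than $\pi$ (Propositions \ref{r1r3pr} and \ref{construct}), obtain a $cs$ path, then cut the complexity down to at most three using Proposition \ref{lengthred}, leaving only {\sc csc} or {\sc ccc} with middle arc of length greater than $\pi$. Two points of comparison. First, your Arzel\`a--Ascoli step is a legitimate but different way to obtain existence; the geometric route recalled in the preliminaries does not need it, since once every path in $\Gamma(\mbox{\sc x,y})$ is shown to be at least as long as an associated $cs$ path of complexity at most three, the infimum over $\Gamma(\mbox{\sc x,y})$ coincides with a minimum over the finitely many candidate {\sc csc} and {\sc ccc} paths determined by the endpoint condition, and is therefore attained. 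Second, the step you flag as the main obstacle --- strict shortening of non-{\sc csc} fragments, so that the minimizer $\gamma$ is itself a $cs$ path --- is not actually needed for the statement: it suffices that a replacement is never longer than its fragment, for then the normalized $cs$ path is again a minimizer and the complexity reduction applies to it instead of to $\gamma$. What your argument does genuinely require, and what is not contained in Proposition \ref{construct} as quoted here (which only asserts existence of the {\sc csc} replacement), is the length comparison between a fragment and its replacement path, together with the precise notion of admissible component and the word analysis bounding the complexity; those are exactly the ingredients supplied in \cite{papera}, so your proposal is correct modulo citing them rather than reproving them.
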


Then in \cite{paperb} we extended our ideas in \cite{papera} to prove the following result which classifies the minimal length elements in connected components of bounded curvature paths in ${\mathbb R}^2$.

\begin{theorem} \label{singudub}(cf. Theorem 4.4 in \cite{paperb}) Given $\mbox{\sc x,y} \in T{\mathbb R}^2$ and $n\in {\mathbb Z}$. Then the minimal length bounded curvature path in $\Gamma(n)$ for $n\in {\mathbb Z}$ must be of the form:

\begin{itemize}
\item {\sc csc} or {\sc ccc}.
\item Symmetric $\mbox{\sc c}^\chi \mbox{\sc s}\mbox{\sc c}$ or $\mbox{\sc c}^{\chi}\mbox{\sc c} \mbox{\sc s}\mbox{\sc c}$.
\item Skew $\mbox{\sc c}^{\chi}\mbox{\sc s}\mbox{\sc c}$ or  $\mbox{\sc c}\mbox{\sc s}\mbox{\sc c}^{\chi}$.
\item $\mbox{\sc c}^\chi \mbox{\sc c}\mbox{\sc c}$ or $\mbox{\sc c} \mbox{\sc c}^{\chi}\mbox{\sc c}$.
 \end{itemize}
Here $\chi$ is the minimal number of crossings for paths in $\Gamma(n)$. In addition, some of the circular arcs or line segments may have zero length.
\end{theorem}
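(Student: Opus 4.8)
The plan is to mimic the two-stage scheme used for the embedded case (\cite{papera}, Theorem \ref{embdub}) — first reduce an arbitrary minimizer to $cs$ form, then classify the surviving $cs$ words — but now carrying the winding number $n$ and the minimal crossing number $\chi$ along as bookkeeping data. First I would establish that $\Gamma(n)$ contains a length minimizer. A minimizing sequence has uniformly bounded lengths, so after reparametrising to $[0,1]$ the paths have uniformly bounded first derivatives and (a.e.) second derivatives; a standard Arzel\`a--Ascoli argument extracts a $C^1$ limit $\gamma_0$ of length at most the infimum, still meeting the endpoint condition and still satisfying $\|\gamma_0''\|\le 1$ almost everywhere, and $\gamma_0$ lies in $\Gamma(n)$ because $C^1$-close paths are regularly homotopic, so by Corollary \ref{wcpm} the winding number is unchanged. (A mild approximation restores the piecewise $C^2$ requirement without increasing length.)

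Next I would put $\gamma_0$ into $cs$ form. Fragment $\gamma_0$ as in Definition \ref{frag}; by Proposition \ref{r1r3pr} each fragment is confined to its region $\mathcal R(\mbox{\sc z})$, and by Proposition \ref{construct} each fragment may be replaced, through a bounded curvature homotopy supported in that region, by a {\sc csc} replacement path. Performing these replacements fragment by fragment yields a bounded curvature homotopy from $\gamma_0$ to a $cs$ path $\gamma_1$ of finite complexity with $\mathcal L(\gamma_1)\le\mathcal L(\gamma_0)$; minimality forces $\mathcal L(\gamma_1)=\mathcal L(\gamma_0)$, so $\gamma_1$ is itself a minimizer. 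Now invoke Proposition \ref{lengthred} repeatedly: as long as $\gamma_1$ has an admissible component as a subpath it can be replaced by a $cs$ path of strictly smaller complexity and no greater length, which is impossible for a minimizer. Hence the minimizer may be taken to be a $cs$ path with no admissible component, and this bounds its complexity a priori.

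Then comes the classification of the surviving words. Write the reduced minimizer as an alternating word in {\sc c}'s and {\sc s}'s. The winding number $n$ fixes, modulo the contribution of the prescribed closure path $\lambda$, the total turning $\tau(s_{\gamma_1})$, and Whitney's setup (Remark \ref{chi}) ties the count of transversal self-intersections to this turning, so the word must carry roughly $2\pi\chi$ of ``extra'' turning beyond a Dubins skeleton while realising exactly $\chi$ crossings, $\chi$ being minimal in $\Gamma(n)$ (any fewer crossings would leave the homotopy class). The key geometric claim is that, subject to the curvature bound and to minimality of both length and crossing count, this extra turning must be absorbed by a single \emph{coil} $\mbox{\sc c}^{\chi}$ — $\chi$ successive traversals of one unit circle — grafted onto a {\sc csc} or {\sc ccc} skeleton. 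I would prove this by surgery on $cs$ paths, combining operations of type I and II with a loop-sliding move: (a) a loop in the interior of an {\sc s}-component, or between two consecutive {\sc c}-components of opposite orientation, can be slid to a terminal {\sc c}-component (an adjacent circle at $\mbox{\sc x}$ or $\mbox{\sc y}$) or onto the middle circle of a {\sc ccc}, without increasing length or crossing number; (b) loops at distinct locations can then be coalesced onto one circle; (c) once coalesced, Theorem \ref{embdub} forces the underlying skeleton to be {\sc csc} or {\sc ccc}. Enumerating the positions at which a $\chi$-loop coil can be attached so that the result still has exactly $\chi$ crossings yields precisely the four families in the statement — symmetric $\mbox{\sc c}^{\chi}\mbox{\sc s}\mbox{\sc c}$ and $\mbox{\sc c}^{\chi}\mbox{\sc c}\mbox{\sc s}\mbox{\sc c}$, skew $\mbox{\sc c}^{\chi}\mbox{\sc s}\mbox{\sc c}$ and $\mbox{\sc c}\mbox{\sc s}\mbox{\sc c}^{\chi}$, and $\mbox{\sc c}^{\chi}\mbox{\sc c}\mbox{\sc c}$ and $\mbox{\sc c}\mbox{\sc c}^{\chi}\mbox{\sc c}$ — with the convention that some circular arcs or line segments may degenerate to zero length.

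The main obstacle I expect is steps (a) and (b): rigorously transporting and merging loops without any length penalty and without creating extra crossings. Sliding a loop past a line segment or past another arc can momentarily violate the curvature bound unless the move is set up carefully, and the proximity condition satisfied by $\mbox{\sc x,y}$ governs which circles are ``available'' to host the coil, so the argument must be run case by case over proximity conditions {\sc A}, {\sc B}, {\sc C}, {\sc D}; under condition {\sc D} the region $\Omega$ obstructs some of these surgeries (cf. Corollary \ref{mainresultp1}) and has to be handled separately. Controlling those obstructions, and certifying that no genuinely different word of equal length escapes the analysis, is where the real work lies.
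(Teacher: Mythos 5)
Your overall route---existence of a minimizer, normalization to a $cs$ path by fragmentation and replacement, reduction of complexity via Proposition \ref{lengthred}, and then collapsing the surplus turning onto circles---is essentially the strategy behind the cited result (note that the present paper does not reprove Theorem \ref{singudub}; it imports it from \cite{paperb}, and its later arguments explicitly invoke the ``collapse the loops onto one of the adjacent circles'' scheme). The difficulty is that the decisive step is exactly the one you defer: your items (a)--(c), transporting loops, coalescing them into a single coil $\mbox{\sc c}^{\chi}$, and certifying that such a coil can only be attached at the positions listed, are precisely the content of Theorem 4.4 of \cite{paperb}, and your proposal offers no argument for them. As stated, (a) is not true without qualification: sliding a loop out of the interior of an {\sc s}-component, or off the middle circle of a {\sc ccc}, onto an adjacent circle in general changes the lengths of the remaining components (the tangent lines and arc lengths of the skeleton must be recomputed), so ``without length penalty'' has to come from an explicit comparison of the finitely many candidate words rather than from the surgery moves themselves; similarly, keeping the number of self-intersections equal to $\chi$ during the slide needs proof, since operations of type I create a figure-eight and can change the crossing count. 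Your heuristic that the word carries ``roughly $2\pi\chi$ of extra turning'' also conflates two different invariants: the winding number $n$ (determined by the total turning together with the closure path $\lambda$) and the minimal crossing number $\chi$ of paths in $\Gamma(n)$ are not related by a simple proportionality for open paths, and the theorem is stated in terms of $\chi$, so this link must be established, not assumed.

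The first two stages are essentially sound and parallel the machinery the paper does develop (Proposition \ref{construct}, Proposition \ref{lengthred}, Theorem \ref{mininhomot}), with two caveats: Proposition \ref{construct} by itself asserts only the existence of a {\sc csc} replacement for a fragment, not that it is no longer than the fragment, so the length comparison has to be quoted from the normalization procedure of \cite{papera} or proved; and you should say explicitly that the replacement deformations are through immersed bounded curvature paths, so that the winding number---and hence membership in $\Gamma(n)$---is preserved, which is the point the paper stresses about fragmentation. But the classification of the surviving words, case by case over the proximity conditions and with the region $\Omega$ obstructing some moves under condition {\sc D}, is where the theorem actually lives, and in your proposal that part remains an outline of what must be shown rather than a proof.
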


\section{Homotopies Preserving the Bounded Curvature Property}\label{section3}

In previous work we found the minimal length elements in spaces of bounded curvature paths, taking into consideration the possible configurations for the endpoint condition and winding number, compare \cite{papera} and \cite{paperb}. Then in \cite{paperc} we proved that bounded curvature paths in $\Delta(\Omega)$ are not bounded-homotopic to bounded curvature paths in $\Delta'(\Omega)$. Using properties of $\Omega$, we proved that paths in $\Delta(\Omega)$ have length bounded above. A crucial step in the classification of the homotopy classes in $\Gamma(\mbox{\sc x,y})$ is a continuity argument preserving the bounded curvature property for homotopies between general bounded curvature paths. The main result of this work is to determine the number of connected components of each space of bounded curvature paths, with the possible configurations of the adjacent circles (proximity conditions) and winding number.

Given a bounded curvature path and a fragmentation, the idea is to continuously deform each fragment onto its replacement path; these deformations will be considered depending on the component of the replacement path to which we are deforming the fragment, see Figure \ref{figsmallhomot}.

{ \begin{figure} [[htbp]
 \begin{center}
\includegraphics[width=.7\textwidth,angle=0]{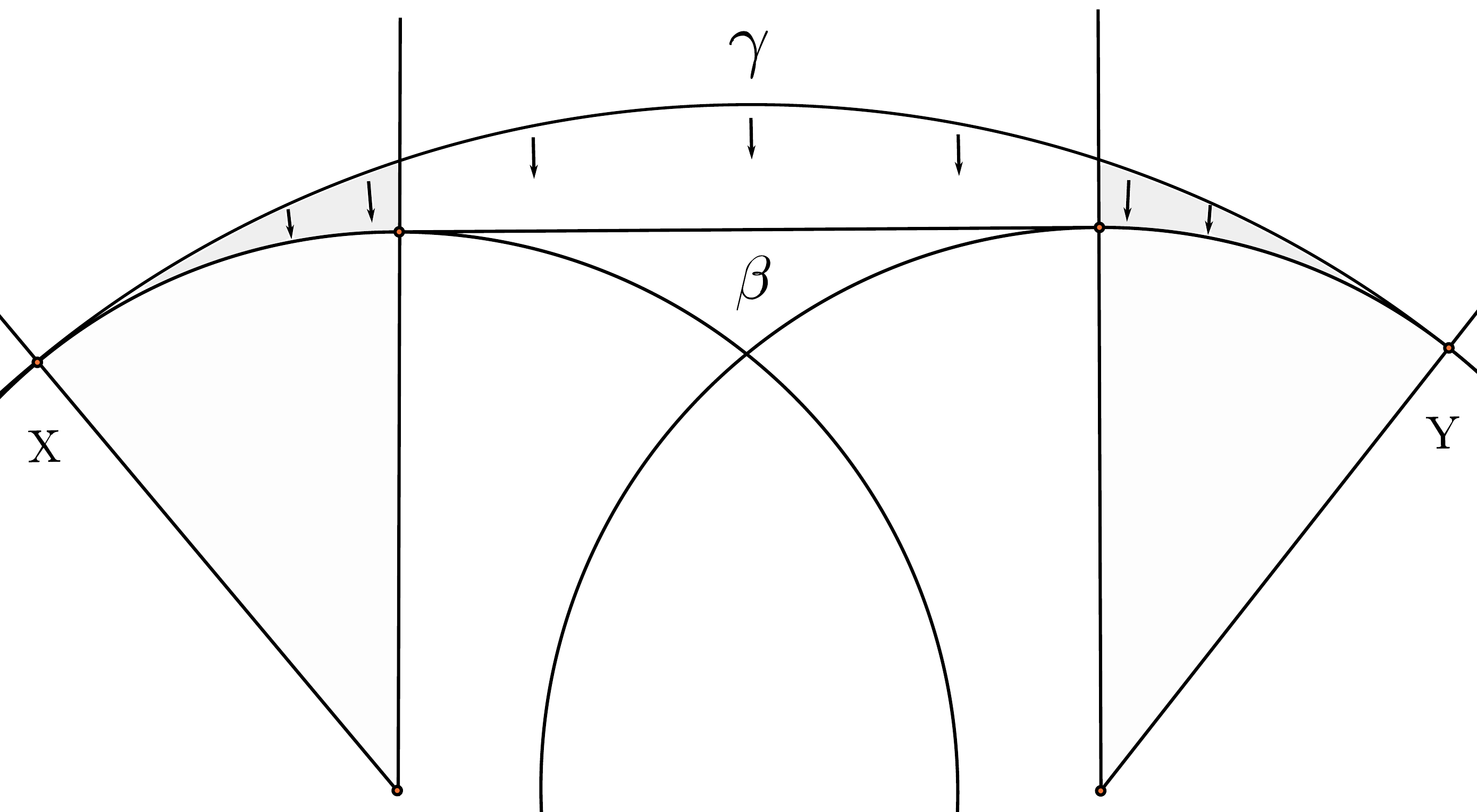}
\end{center}
\caption{An illustration of a fragment $\gamma$ and its replacement path $\beta$. The arrows suggest continuous deformation.}
\label{figsmallhomot}
\end{figure}}

In order to facilitate the exposition we denote homotopies by $H(p,t)$ rather than ${\mathcal H}_t(p)$ as we did in our previous works.

For a path $\gamma(t)=(u(t),v(t))$, we define the radial homotopy as:

$$R(p,t)=(1-p +\frac{p}{||(u(t),v(t))||})(u(t),v(t))$$

and the orthogonal homotopy as:

$$O(p,t)=(1-p)(u(t),v(t)) +p(0,v(t))$$

In order to ensure that the curvature is always bounded we will develop an argument relying on a second order approximation on sufficiently small fragments. As a consequence, we will have control of the local variation of the turning map applied in between these small fragments. In this fashion, we ensure that sufficiently small fragments mapped under the radial or orthogonal homotopy preserve the curvature bound. The wedge regions in Figure \ref{figsmallhomot} illustrate where to apply the radial homotopy to $\gamma$. We apply the orthogonal homotopy in the middle region in Figure \ref{figsmallhomot}.

We presented some simple operations performed on $cs$ paths (piecewise constant bounded curvature paths). In general, we need a continuity argument preserving the bounded curvature property for homotopies between bounded curvature paths that do not have piecewise constant bounded curvature.

Next, we  consider the curve,

 \begin{equation} {c}(\theta)=(x+\cos\theta, \sin \theta)\label{eqn1}\end{equation}
  for some $x\geq 0$ and $\theta \in [-\epsilon,\epsilon]$ with $\epsilon>0$. Also we will need the function $h: [0,1]\to{\mathbb R} $ where,
  \begin{equation} h(p)= 1-p +\frac{p}{a}\label{eqn2}\end{equation}
with $a>0$.
Putting definitions (\ref{eqn1}) and (\ref{eqn2}) together and setting $a=||(x+\cos\theta, \sin \theta)||$, we define the two variable map $R: [0,1]\times[-\epsilon,\epsilon]\to {\mathbb R}^2$ as,
 $$R(p,\theta)=h(p){c}(\theta)$$
So,
 \begin{equation} R(p,\theta)=(1-p +\frac{p}{||(x+\cos\theta, \sin \theta)||})(x+\cos\theta, \sin \theta) \label{preprehomo}\end{equation}
Next we consider second order Taylor approximations to $R$ fixing $p$, about $\theta=0$. The symbol $\approx$ represents equivalence up to second order Taylor approximation.

First note that, $$(x+\cos\theta, \sin \theta)\approx (x+1-\frac{\theta^2}{2},\sin \theta)\approx (x+1-\frac{\theta^2}{2},\theta)$$
On the other hand,
 $$||(x+\cos\theta, \sin \theta)||^2\approx ((x+1-\frac{\theta^2}{2})^2+\theta^2)\approx ((x+1)^2-x \theta^2)$$
So,
$$\frac{1}{||(x+\cos\theta, \sin \theta)||} \approx \frac{1}{\sqrt{(x+1)^2-x \theta^2}} =\frac{1}{(x+1)} \frac{1}{\sqrt{1-\frac{x\theta^2}{(x+1)^2}}} $$
Again, up to second order Taylor approximation,
$$\frac{1}{(x+1)} \frac{1}{\sqrt{1-\frac{x\theta^2}{(x+1)^2}}} \approx \frac{1}{(x+1)} \bigg( 1+\frac{1}{2}\frac{x}{(x+1)^2}\theta^2\bigg)$$
Next we replace the map $R: [0,1]\times[-\epsilon,\epsilon]\to {\mathbb R}^2$ by a second order Taylor approximation:
\begin{equation} H(p,\theta)=\bigg(1-p + \frac{p}{(x+1)} ( 1+\frac{1}{2}\frac{x}{(x+1)^2}\theta^2)\bigg)(x+1-\frac{\theta^2}{2},\theta)\label{prehomo}\end{equation}
By letting $u=x+1$ we obtain:
\begin{equation} H(p,\theta)=\bigg(1-p + \frac{p}{u} ( 1+\frac{1}{2}\frac{x}{u^2}\theta^2)\bigg)(u-\frac{\theta^2}{2},\theta)\label{prehomou}\end{equation}
After multiplying and then applying second order Taylor approximation to equation (\ref{prehomou}) we obtain:
\begin{equation} H(p,\theta)=\bigg(\theta^2(-\frac{1}{2}+\frac{p}{2}+\frac{px}{2u^2}-\frac{p}{2u})+u-pu+p, \theta(1-p+\frac{p}{u})\bigg)\label{homoutet}\end{equation}
By considering the substitution: $$w_\theta=\theta(1-p+\frac{p}{u})$$
We rewrite equation (\ref{homoutet}) as:
\begin{equation} H(p,\theta)=\bigg(\frac{w_\theta^2}{(1-p+\frac{p}{u})^2}(-\frac{1}{2}+\frac{p}{2}+\frac{px}{2u^2}-\frac{p}{2u})+u-pu+p, w_\theta\bigg)\label{homoutetw}\end{equation}
and consider:
$$A_p=1-p+\frac{p}{u}\qquad\mbox{and}\qquad B_p=-\frac{1}{2}+\frac{p}{2}+\frac{px}{2u^2}-\frac{p}{2u}$$
to write,
\begin{equation} (\frac{w_\theta^2}{A_p^2}B_p+u-pu+p, w_\theta)=(\phi(w_\theta),w_\theta)\label{homoutetw}\end{equation}
Then, by considering the two components of $H$ as only function of $\theta$ we write:
\begin{equation} H(p,w_\theta)\approx(\phi(w_\theta),w_\theta)\label{homograph}\end{equation}

As in equation (\ref{homograph}) we can view $H$ as a real-valued function of a single
variable. Therefore, by considering $ \frac{\partial w_\theta}{\partial \theta}=1-p+\frac{p}{u}$, the curvature of $\phi(w_\theta)$ can be easily computed by using the curvature of a graph formula:
\begin{equation} \kappa_{\phi}(p,\theta)=\frac{\frac{\partial^2 \phi}{\partial \theta^2}}{({1+[ \frac{\partial \phi}{\partial \theta}]^2)^{\frac{3}{2}}}}\label{homocurv}\end{equation}
Applying the chain rule we obtain:
\begin{equation}  \frac{\partial \phi}{\partial \theta}=\frac{2B_pw_\theta}{A_p}\qquad\mbox{and}\qquad  \frac{\partial^2 \phi}{\partial \theta^2}=2B_p \label{homocuvderi}\end{equation}
So that,
\begin{equation} \kappa_{\phi}(p,\theta)=\frac{2B_p}{(1+[2B_p\theta]^2)^{\frac{3}{2}}}\label{homocurvfirst}\end{equation}
An immediate evaluation in equation (\ref{homocurvfirst}) shows that $\kappa_{\phi}(0,0)=-1$. Also, a simple computation leads to:
\begin{equation}\frac{\partial \kappa_\phi}{\partial p}(0,0)=1+\frac{x}{2(x+1)}+\frac{x}{2(x+1)^2} \label{homocurvsecond}\end{equation}
Since $x\geq 0$ we conclude that:
\begin{equation}\frac{\partial \kappa_\phi}{\partial p}(0,0)>0 \label{homocurvsecond}\end{equation}

In addition, $\frac{\partial \kappa_\phi}{\partial p}(p,\theta)$ is clearly positive in a neighborhood of $(0,0)$. So, there exists $\epsilon>0$ with $|\theta|<\epsilon$ such that,
$$\frac{\partial \kappa_\phi}{\partial p}(0,0)>0$$

An identical argument shows that the second order approximation to a bounded curvature path around the point $\gamma(t)$ with $\kappa_{\phi}(0,0)=1$ is such that: There exists $\epsilon>0$ with $|\theta|<\epsilon$ such that,
$$\frac{\partial \kappa_\phi}{\partial p}(0,0)<0$$

 So, we have proved the following result.

\begin{lemma}\label{homotopycurvature} For the radial homotopy, there exists $\epsilon>0$ with $|\theta|<\epsilon$ such that if:
\begin{itemize}
\item $\kappa_{\phi}(0,0)=-1$ then $\frac{\partial \kappa_\phi}{\partial p}(0,0)>0$.
\item $\kappa_{\phi}(0,0)=1$ then $\frac{\partial \kappa_\phi}{\partial p}(0,0)<0$.
\end{itemize}
\end{lemma}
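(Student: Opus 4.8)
The plan is to reduce the statement to the explicit one-variable curvature computation that precedes it, organised as follows. Near the base point $\gamma(t)$ of a bounded curvature path, choose coordinates so that $\gamma(t)$ lies at $(x+1,0)$ on the positive $x$-axis with $\gamma'(t)$ vertical; then the osculating circle of $\gamma$ at $\gamma(t)$ is $c(\theta)=(x+\cos\theta,\sin\theta)$ of (\ref{eqn1}), which is a second-order model of $\gamma$ near $\gamma(t)$ and has unit curvature. Since the curvature of a $C^2$ curve at a point depends only on its $2$-jet, and the $p$-derivative of that curvature depends only on the $p$-derivatives of the $2$-jet coefficients, one may replace the true radial homotopy $R(p,\theta)=h(p)c(\theta)$ of (\ref{preprehomo}) by its second-order Taylor expansion in $\theta$, namely the map $H(p,\theta)$ of (\ref{prehomo}); the orientation matching the first case is $c$ as written, and the second case ($\kappa_\phi(0,0)=1$) is obtained from it by the reflection $y\mapsto-y$, which commutes with the radial homotopy and flips the sign of the curvature.

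First I would put $H(p,\cdot)$ into graph form and carry out the computation. Setting $u=x+1$, multiplying out and discarding the $\theta^4$ term, $H(p,\theta)$ is quadratic in $\theta$ in each coordinate, and the linear substitution $w_\theta=\theta(1-p+\tfrac pu)$ turns it into the sideways graph $H(p,w_\theta)\approx(\phi(w_\theta),w_\theta)$ with $\phi(w_\theta)=\tfrac{B_p}{A_p^2}w_\theta^2+u-pu+p$, $A_p=1-p+\tfrac pu$, $B_p=-\tfrac12+\tfrac p2+\tfrac{px}{2u^2}-\tfrac p{2u}$. Feeding $\phi$ into the curvature-of-a-graph formula (\ref{homocurv}), and tracking the chain-rule factor $\partial w_\theta/\partial\theta=A_p$ as in (\ref{homocuvderi}), yields the closed form (\ref{homocurvfirst}) for $\kappa_\phi(p,\theta)$. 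Two evaluations then finish the argument: at $(p,\theta)=(0,0)$ the homotopy is the identity, $A_0=1$ and $B_0=-\tfrac12$, so $\kappa_\phi(0,0)=2B_0=-1$, matching the hypothesis of the first case; and differentiating (\ref{homocurvfirst}) in $p$ and evaluating at the origin gives the explicit expression (\ref{homocurvsecond}), which is positive for every $x\ge 0$. Applying the reflection $y\mapsto-y$ to the whole computation changes every sign, so in the second case $\kappa_\phi(0,0)=1$ and $\frac{\partial\kappa_\phi}{\partial p}(0,0)<0$. Finally, because $\frac{\partial\kappa_\phi}{\partial p}(p,\theta)$ is continuous, its sign at the origin persists on an open neighbourhood of $(0,0)$; intersecting that neighbourhood with a strip $|\theta|<\epsilon$ produces the $\epsilon>0$ asserted in the statement.

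The graph reduction, the use of (\ref{homocurv}) and the two evaluations are all routine; the one step that needs genuine care — and the only real obstacle I anticipate — is justifying that $R$ may be replaced by its second-order Taylor approximation $H$ without corrupting either $\kappa_\phi(0,0)$ or $\frac{\partial\kappa_\phi}{\partial p}(0,0)$. Here one must observe that $R(p,\cdot)$ and $H(p,\cdot)$ have the same $2$-jet in $\theta$ at $\theta=0$, uniformly for $p$ near $0$, so that every approximation step leading to (\ref{prehomo})--(\ref{homoutet}) discards only terms of order $\theta^3$ or higher — hence nothing that can influence the curvature at $\theta=0$ or its first $p$-derivative there. A secondary bookkeeping point is that the graph parameter is $w_\theta$, not $\theta$, so the factor $A_p=\partial w_\theta/\partial\theta$ has to be carried through the chain rule; since $A_0=1$, this factor disappears in the evaluation at the origin, which is why that evaluation is clean.
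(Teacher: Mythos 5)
Your outline follows the paper's computation, but the two points at which you add justification are exactly where the argument breaks, and the break is not cosmetic. The chain-rule bookkeeping is the fatal one: you assert that since $A_0=1$ the factor $A_p=\partial w_\theta/\partial\theta$ ``disappears in the evaluation at the origin''. That is true for evaluating $\kappa_\phi(0,0)$, but not for evaluating $\frac{\partial\kappa_\phi}{\partial p}(0,0)$, because $A_p$ depends on $p$ and $\frac{\partial A_p}{\partial p}\big|_{p=0}=-\frac{x}{x+1}\neq 0$. Written as a graph over the parameter $w_\theta$, the curve of (\ref{homoutetw}) is $\phi(w)=\frac{B_p}{A_p^{2}}w^{2}+u-pu+p$, so the curvature at the origin is $2B_p/A_p^{2}$, not $2B_p$ as in (\ref{homocurvfirst}); its $p$-derivative at $p=0$ is $2B_0'-4B_0A_0'=\frac{x(x+2)}{(x+1)^{2}}-\frac{2x}{x+1}=-\frac{x^{2}}{(x+1)^{2}}\le 0$. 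The same value is obtained by differentiating the exact map $R(p,\theta)=h(p)c(\theta)$ of (\ref{preprehomo}) with no Taylor truncation at all, so your (correct) $2$-jet reduction cannot repair it. In particular, your statement that differentiating (\ref{homocurvfirst}) in $p$ gives (\ref{homocurvsecond}) is false, and (\ref{homocurvsecond}) itself fails the sanity check $x=0$: there $\|c(\theta)\|\equiv 1$, the radial homotopy fixes the curve pointwise, and the derivative must vanish. So the computation you propose, carried out correctly, produces the opposite sign to the first bullet; as written, the proposal does not establish the lemma, and the obstruction lies in the exact geometry (for $x>0$ the radial flow toward the unit circle centred at $o$ initially increases the curvature magnitude of the osculating circle centred at $(x,0)$), not in any approximation step.

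The reflection argument for the second bullet also does not work. The quantity $\kappa_\phi$ is the graph curvature of $x$ as a function of $y$, which is invariant under $y\mapsto-y$; that reflection maps the model circle $c(\theta)=(x+\cos\theta,\sin\theta)$ of (\ref{eqn1}) to itself up to reparametrisation, so it keeps you in the $\kappa_\phi(0,0)=-1$ configuration and says nothing about the $\kappa_\phi(0,0)=1$ case. The latter is the geometrically distinct situation in which the osculating circle at $\gamma(t)=(x+1,0)$ is centred at $(x+2,0)$, on the far side from the projection centre; no symmetry commuting with the radial homotopy exchanges the two (reflection across the line $L$ moves the centre of projection), so this case needs its own computation, and doing it gives $\frac{\partial\kappa_\phi}{\partial p}(0,0)=\frac{x^{2}-2}{(x+1)^{2}}$, which has the claimed sign only for $x<\sqrt{2}$. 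For comparison, the paper disposes of this case with the phrase ``an identical argument'', and its displayed derivative (\ref{homocurvsecond}) is not what its own formula (\ref{homocurvfirst}) yields, so you should not treat those displays as a reliable target: to prove a lemma of this type you would need either a restriction on the configurations (e.g.\ on $x$) in which the radial flow is applied, or a different deformation whose effect on the extremal curvature can be controlled.
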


In other words, when applying the radial projection (see equation \ref{preprehomo}) to the second order approximation of $\gamma$ at $t\in I$ (for a small variation of the angle $\theta$ and a small variation of the homotopy parameter $p$) the curvature increases for the first statement in Lemma \ref{homotopycurvature} and decreases for the second statement in Lemma \ref{homotopycurvature}.

\begin{corollary}\label{homotopycurvaturecoro} For the orthogonal homotopy, there exists $\epsilon>0$ with $|\theta|<\epsilon$ such that if:
\begin{itemize}
\item $\kappa_{\phi}(0,0)=-1$ then $\frac{\partial \kappa_\phi}{\partial p}(0,0)>0$.
\item $\kappa_{\phi}(0,0)=1$ then $\frac{\partial \kappa_\phi}{\partial p}(0,0)<0$.
\end{itemize}
\end{corollary}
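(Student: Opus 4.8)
The plan is to rerun, essentially verbatim, the computation that produced Lemma \ref{homotopycurvature}, replacing the radial homotopy $R$ by the orthogonal homotopy $O(p,t)=(1-p)(u(t),v(t))+p(0,v(t))=\big((1-p)u(t),\,v(t)\big)$. As in that argument one works in the local frame in which the tangent to $\gamma$ at the marked point is vertical and the second order model of $\gamma$ near that point is the arc $c(\theta)=(x+\cos\theta,\sin\theta)$ of its osculating unit circle, with $x\ge 0$ and $\kappa_\phi(0,0)=-1$. Applying $O$ to this model gives $O(p,\theta)=\big((1-p)(x+\cos\theta),\,\sin\theta\big)$, and the point of the whole argument is to show that pushing $x$ toward $0$ while holding $y$ fixed drives the curvature at the marked point from $-1$ toward $0$ (respectively from $1$ toward $0$), so that the curvature bound is respected for small homotopy parameter.

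The computation is actually shorter than in the radial case, because the second coordinate is left unchanged: there is no need for the substitution $w_\theta=\theta(1-p+\tfrac{p}{u})$, and the chain rule step $(\ref{homocuvderi})$ collapses. Taking the second order Taylor expansion about $\theta=0$ and writing $u=x+1$ gives $O(p,\theta)\approx\big((1-p)(u-\tfrac{\theta^2}{2}),\,\theta\big)$, so the image is the graph $\phi(w_\theta)=(1-p)u-\tfrac{1-p}{2}w_\theta^{2}$ over the vertical axis with $w_\theta=\theta$. Hence $\tfrac{\partial\phi}{\partial\theta}=-(1-p)\theta$ and $\tfrac{\partial^2\phi}{\partial\theta^2}=-(1-p)$, and the graph-curvature formula $(\ref{homocurv})$ yields
\[
\kappa_\phi(p,\theta)=\frac{-(1-p)}{\big(1+[(1-p)\theta]^{2}\big)^{3/2}}.
\]
Evaluating gives $\kappa_\phi(0,0)=-1$, and along $\theta=0$ the curvature is the linear function $\kappa_\phi(p,0)=p-1$, so $\tfrac{\partial\kappa_\phi}{\partial p}(0,0)=1>0$. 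Reflecting the model across the tangent line (equivalently, starting from the oppositely oriented osculating arc, exactly as in the passage following Lemma \ref{homotopycurvature}) turns this into $\kappa_\phi(0,0)=1$ with $\tfrac{\partial\kappa_\phi}{\partial p}(0,0)=-1<0$. Since $\tfrac{\partial\kappa_\phi}{\partial p}$ is continuous near $(0,0)$ — a direct differentiation gives $\tfrac{\partial\kappa_\phi}{\partial p}(p,\theta)=\big(1+[(1-p)\theta]^{2}\big)^{-5/2}\big(1-2[(1-p)\theta]^{2}\big)$ — the strict sign persists for $|\theta|<\epsilon$, which is precisely the content of both bullets; one may in fact take $\epsilon=1/\sqrt2$, uniformly in $p\in[0,1]$.

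Because the argument is a strict simplification of the one behind Lemma \ref{homotopycurvature}, I do not expect a genuine obstacle. The only points needing care are the ones already dispatched in the radial case: that the second order model faithfully controls the curvature of $\gamma$ under the deformation for $\theta$ in a small interval, and that the osculating-circle picture is the correct local normal form around $\gamma(t)$. Both transfer without change, since $O$ and $R$ agree to first order at the marked point and the relevant curvature comparison is a second order phenomenon. The mild bookkeeping to double check is just the reflection symmetry used to pass from the $\kappa_\phi(0,0)=-1$ case to the $\kappa_\phi(0,0)=+1$ case, namely that $O$ is equivariant under reflection of the first coordinate across the tangent line, so that $\kappa_\phi$ and $\tfrac{\partial\kappa_\phi}{\partial p}$ simply change sign.
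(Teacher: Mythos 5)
Your proposal is correct and follows essentially the same route as the paper, which simply says to repeat the procedure of Lemma \ref{homotopycurvature} for the orthogonal homotopy and leaves the details to the reader; you have supplied exactly those details, and the computation checks out, giving $\kappa_\phi(p,\theta)=\frac{-(1-p)}{(1+[(1-p)\theta]^2)^{3/2}}$, hence $\frac{\partial \kappa_\phi}{\partial p}(0,0)=1>0$ in the $\kappa_\phi(0,0)=-1$ case and the sign-reversed statement by reflection. Your explicit formula for $\frac{\partial \kappa_\phi}{\partial p}(p,\theta)$ even yields a uniform $\epsilon=1/\sqrt{2}$, which is a small improvement on the purely local persistence-of-sign argument in the radial case.
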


\begin{proof}  By applying a similar procedure as the one in Lemma \ref{homotopycurvature} the result follows. We leave the details to the reader.
\end{proof}

\begin{remark}\label{remainsbcp}   \hfill
\begin{itemize}

\item Let $\gamma$ be a fragment, denote $\gamma(t)=z$ and $\gamma'(t)=Z$. Let $L$ be the line  through $z$ orthogonal to $Z$. Assume the point $(x,0)$ is the center of $\mbox{\sc C}_ r(\mbox{\sc z})$ and let $o$ be the origin of a local coordinate system with $\gamma(t)$ lying on the positive $x$-axis and $x>0$. The dashed circle has center $o$, radius one and is used only as reference (see Figure \ref{figwedges0}).

\item Consider $\epsilon>0$ according to Lemma  \ref{homotopycurvature}. Recall that the second order approximation to $\gamma$ at $t\in I$ (locally) corresponds to a unit radius circle, then (locally) we can consider $\mbox{\sc C}_ r(\mbox{\sc z})$ to be the second order approximation at $\gamma(t)$ in case the curvature at $\gamma(t)$ is negative, or $\mbox{\sc C}_ l(\mbox{\sc z})$ in case the curvature at $\gamma(t)$ is positive. In addition, recall that by  Proposition \ref{r1r3pr} the region ${\mathcal R}(\mbox{\sc z})$ contains $\gamma$ (see Figure \ref{figleavreg}). We could flow the region ${\mathcal R}(\mbox{\sc z})$ using the radial (orthogonal) homotopy. Using similar ideas to Lemma \ref{homotopycurvature}, since $\gamma$ remains in ${\mathcal R}(\mbox{\sc z})$ during the flow, this could be used to show the curvature of $\gamma$ remains bounded by 1 as desired.

\end{itemize}
\end{remark}

{ \begin{figure} [[htbp]
 \begin{center}
\includegraphics[width=.5\textwidth,angle=-1]{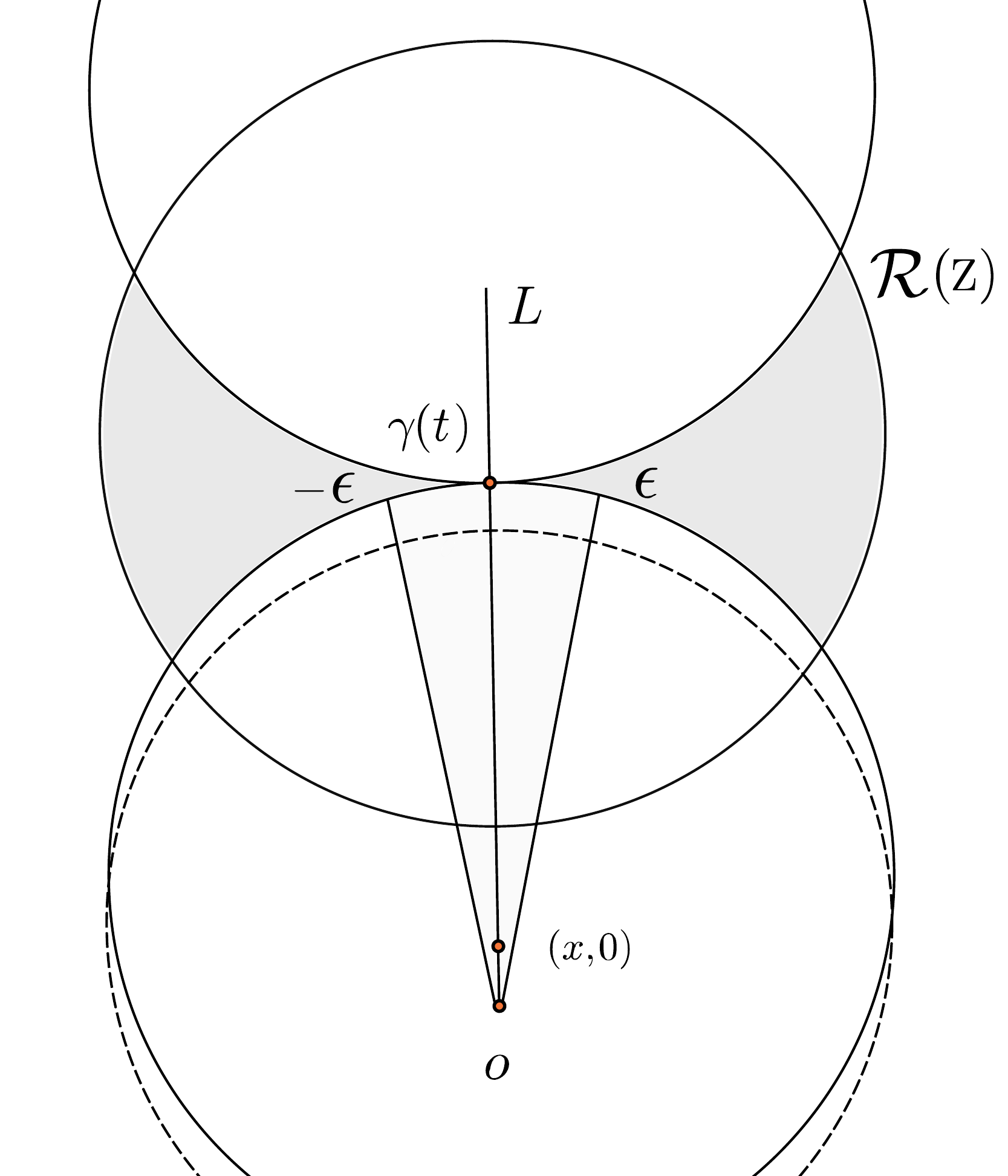}
\end{center}
\caption{Description of notation in Remark \ref{remainsbcp}. The right adjacent circle of $\gamma(t)$ has center at $(x,0)$, the dashed circle has center $o$ and the shaded region corresponds to ${\mathcal R}(\mbox{\sc z})$. We exaggerate the size of the arc $|\theta|<\epsilon$ to facilitate the exposition, see Figure \ref{figwedges1} for a blow up around $\gamma(t)$.}
\label{figwedges0}
\end{figure}}

{ \begin{figure} [[htbp]
 \begin{center}
\includegraphics[width=.9\textwidth,angle=0]{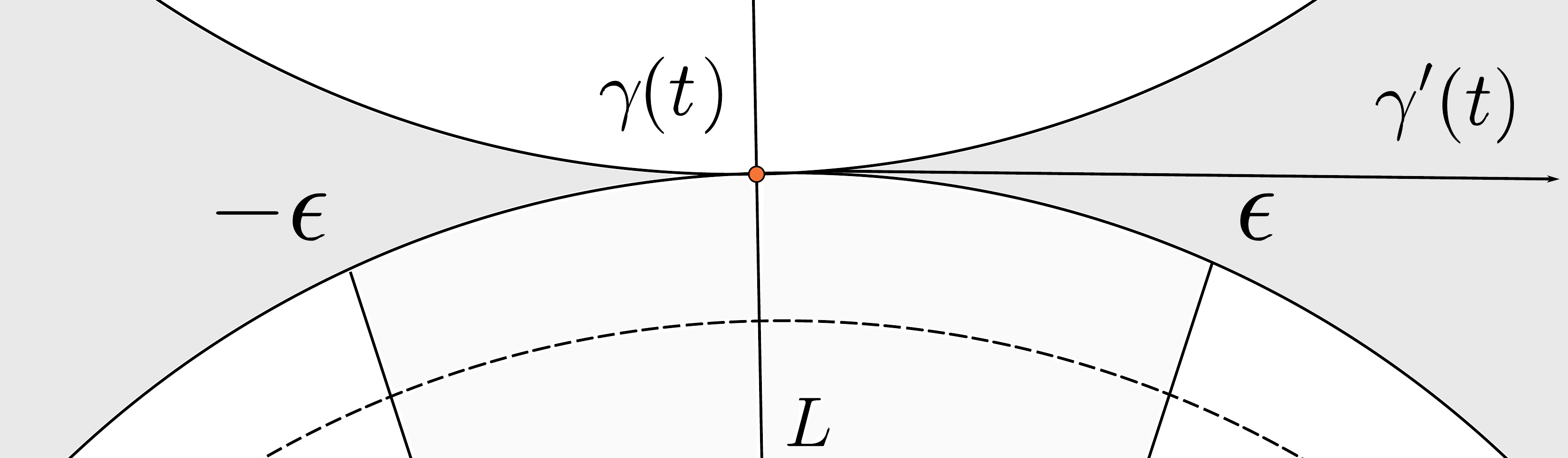}
\end{center}
\caption{A blow up around $\gamma(t)$ in Figure \ref{figwedges0}. Note that $\gamma'(t)$ meets orthogonally $L$, also note that the path $\gamma$ travels from left to right as usual.}
\label{figwedges1}
\end{figure}}

It is crucial to know when the radial (orthogonal) homotopy can be applied so that the curvature remains bounded by 1.

\begin{lemma}\label{conticurv} Given $\epsilon>0$ corresponding to the length of a bounded curvature path $\gamma$, there exists $\delta>0$ such that the curvature at $\gamma(t)$ for $t\in (t-\delta,t+\delta)$ remains bounded by 1.
\end{lemma}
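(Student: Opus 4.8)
The statement of Lemma~\ref{conticurv} should be read as a uniform-continuity claim: along a bounded curvature path $\gamma$ of length $\epsilon$, the curvature (where defined) together with its second-order behaviour varies slowly enough that, once we fix a point $t$ and apply the radial or orthogonal homotopy to the fragment around $t$, the curvature of every nearby point stays inside $[-1,1]$. The plan is to combine two ingredients already in place: (a) Lemma~\ref{homotopycurvature} and Corollary~\ref{homotopycurvaturecoro}, which control the sign of $\partial\kappa_\phi/\partial p$ at $(0,0)$ for a second-order model whose curvature at the base point is exactly $\pm 1$; and (b) Proposition~\ref{r1r3pr} together with Remark~\ref{remainsbcp}, which confine the fragment to the region ${\mathcal R}(\mbox{\sc z})$ so that the relevant adjacent circle $\mbox{\sc C}_r(\mbox{\sc z})$ or $\mbox{\sc C}_l(\mbox{\sc z})$ serves as the osculating circle at the base point.

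First I would set up the local picture exactly as in Remark~\ref{remainsbcp}: fix $t$, write $\gamma(t)=z$, $\gamma'(t)=Z$, and take the local coordinate system with $z$ on the positive $x$-axis and the relevant adjacent-circle center at $(x,0)$ for some $x\ge 0$. Because $\gamma$ is $C^1$ and piecewise $C^2$ with $\|\gamma''\|\le 1$, the turning map $\tau$ is Lipschitz (indeed $|\tau'|=|\kappa|\le 1$), so for any prescribed angular tolerance $\alpha>0$ there is $\delta_1>0$ with $|\tau(t')-\tau(t)|<\alpha$ whenever $|t'-t|<\delta_1$; this is the quantitative meaning of ``control of the local variation of the turning map'' mentioned before equation~(\ref{eqn1}). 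Next, for each such $t'$ the second-order Taylor model of $\gamma$ at $t'$ is a circle of radius $\ge 1$ (curvature in $[-1,1]$), and the computation culminating in equation~(\ref{homocurvfirst}) shows $\kappa_\phi(p,\theta)=2B_p/(1+[2B_p\theta]^2)^{3/2}$ depends continuously on the data $(p,\theta,x)$ and, crucially, on the base curvature $\kappa_\phi(0,0)$. The estimate~(\ref{homocurvsecond}), $\partial\kappa_\phi/\partial p(0,0)=1+\tfrac{x}{2(x+1)}+\tfrac{x}{2(x+1)^2}$, is bounded above uniformly for $x$ in any compact set (and the fragment lives in a bounded region, so $x$ is bounded), hence there is a uniform $\delta>0$ — independent of $t$ — and a uniform $p$-range on which $|\kappa_\phi(p,\theta)|$ stays below $1+\eta$ for the points with $|\theta|<\delta$. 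Shrinking once more to absorb the $\eta$ via the strict inequalities in Lemma~\ref{homotopycurvature} (when the base curvature is exactly $\pm1$ the derivative has the favorable sign, so the curvature strictly decreases in absolute value for small $p$; when it is strictly inside $(-1,1)$ there is slack to begin with), we obtain the claimed $\delta$.

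The step I expect to be the genuine obstacle is the \emph{uniformity} of $\delta$ in $t$, i.e.\ passing from the pointwise second-order computations of Lemma~\ref{homotopycurvature} (which fix a single base point) to a single $\delta$ that works simultaneously along all of $\gamma$. Two issues arise: the path is only piecewise $C^2$, so $\gamma''$ has jump discontinuities and one must argue on each $C^2$ piece and then handle the finitely many junctions by a $C^1$ matching argument; and the parameter $x$ (the offset of the osculating/adjacent circle) varies with $t$, so one needs the constants in~(\ref{homocurvsecond}) to be bounded over the range of $x$ encountered — which follows because $\tfrac{x}{2(x+1)}+\tfrac{x}{2(x+1)^2}<1$ for all $x\ge 0$, giving the clean uniform bound $\partial\kappa_\phi/\partial p(0,0)<2$. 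Once uniformity is secured, a standard compactness/Lebesgue-number argument on $[0,\epsilon]$ produces the single $\delta$, and the lemma follows.
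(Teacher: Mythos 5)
Your proposal follows essentially the same route as the paper's proof: fix the local coordinates of Remark \ref{remainsbcp}, invoke Lemma \ref{homotopycurvature} (and Corollary \ref{homotopycurvaturecoro}) for the favorable sign of $\partial\kappa_\phi/\partial p$ at the model circle, and conclude by continuity of the derivative that the curvature stays bounded by $1$ on a small parameter neighborhood. Your additional material on uniformity of $\delta$ in $t$ (the compactness/Lebesgue-number argument, the piecewise-$C^2$ junctions, and the uniform bound $\partial\kappa_\phi/\partial p(0,0)<2$ for all $x\ge 0$) goes beyond the paper's pointwise argument --- the paper defers that uniform statement to Lemma \ref{finefrag} --- but it is consistent with, and if anything strengthens, the intended claim.
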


\begin{proof} Let $\gamma$ be a bounded curvature path of length $\epsilon>0$ and consider a local coordinate system as in Remark \ref{remainsbcp}. By Lemma \ref{homotopycurvature} we know that the curvature of $\mbox{\sc C}_ r(\mbox{\sc z})$ and $\mbox{\sc C}_{ l}(\mbox{\sc z})$ remain bounded under the application of $R$ for a sufficiently small homotopy parameter.  Due to continuity of the derivative, there is a neighborhood of $t\in I$ where the curvature increases if  $\kappa_{\phi}(0,0)=-1$ (or decreases if $\kappa_{\phi}(0,0)=1$) i.e., there exists $\delta>0$ such that the curvature at $\gamma(t)$ for $t\in (t-\delta,t+\delta)$ remains bounded by 1.
\end{proof}

\begin{lemma} \label{finefrag} A bounded curvature path admits a fragmentation with fragments satisfying Lemma \ref{conticurv}.
\end{lemma}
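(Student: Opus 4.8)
The plan is to produce the fragmentation by covering the parameter interval with the local ``good'' neighbourhoods supplied by Lemma \ref{conticurv}, extracting a uniform scale via compactness, and then subdividing $[0,s_\gamma]$ at that scale while keeping fragments shorter than $1$.

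First I would fix a bounded curvature path $\gamma:[0,s_\gamma]\to\mathbb R^2$. For each $t\in[0,s_\gamma]$, Lemma \ref{conticurv} applied to $\gamma$ at the point $\gamma(t)$, using the local coordinate system of Remark \ref{remainsbcp}, yields a number $\delta_t>0$ such that on $J_t=(t-\delta_t,t+\delta_t)\cap[0,s_\gamma]$ the curvature of $\gamma$ stays bounded by $1$ under the radial (respectively orthogonal) homotopy. The family $\{J_t\}_{t\in[0,s_\gamma]}$ is an open cover of the compact interval $[0,s_\gamma]$, so by the Lebesgue number lemma there is a single $\delta>0$ with the property that every subinterval of $[0,s_\gamma]$ of length less than $\delta$ is contained in some $J_t$. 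Now set $\delta'=\min\{\delta,1\}$ and choose any partition $0=t_0<t_1<\cdots<t_m=s_\gamma$ with $t_i-t_{i-1}<\delta'$ for every $i$; since $s_\gamma<\infty$, finitely many points suffice. Because $t_i-t_{i-1}<1$ this is a fragmentation in the sense of Definition \ref{frag}, and because $t_i-t_{i-1}<\delta$ each fragment $\gamma|_{[t_{i-1},t_i]}$ lies inside one of the neighbourhoods $J_t$ and hence inherits the conclusion of Lemma \ref{conticurv}. (One also uses that the ``good fragment'' property localises: the estimate of Lemma \ref{homotopycurvature} together with the confinement of Proposition \ref{r1r3pr} is stable under passing to subintervals, so if the homotopy keeps the curvature bounded by $1$ on $J_t$, it does so on any subinterval of $J_t$.)

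The only genuine subtlety is uniformity of the scale: a priori $\delta_t$ could degenerate to $0$ as $t$ ranges over $[0,s_\gamma]$, which would obstruct a single finite fragmentation. This is precisely what compactness of the parameter interval rules out, via the Lebesgue number lemma (or, equivalently, a finite subcover argument combined with taking the minimum of the finitely many radii), and I expect this to be the main — essentially the only — point requiring care. The remainder is the routine bookkeeping of choosing the partition fine enough to be simultaneously below $\delta$ and below $1$, which is immediate once the uniform $\delta$ is in hand.
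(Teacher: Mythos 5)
Your argument is correct, and it is essentially the paper's own approach made explicit: the paper's proof is the single line ``Immediate since bounded curvature paths are $C^1$,'' whose intended content is exactly the uniformity you extract via compactness of $[0,s_\gamma]$ and the Lebesgue number lemma from the pointwise $\delta_t$ of Lemma \ref{conticurv}. Your version simply supplies the details (uniform scale, fragments of length less than $1$, localisation to subintervals) that the paper leaves to the reader.
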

\begin{proof} Immediate since bounded curvature paths are $C^1$.
\end{proof}

{ \begin{figure} [[htbp]
 \begin{center}
\includegraphics[width=.9\textwidth,angle=0]{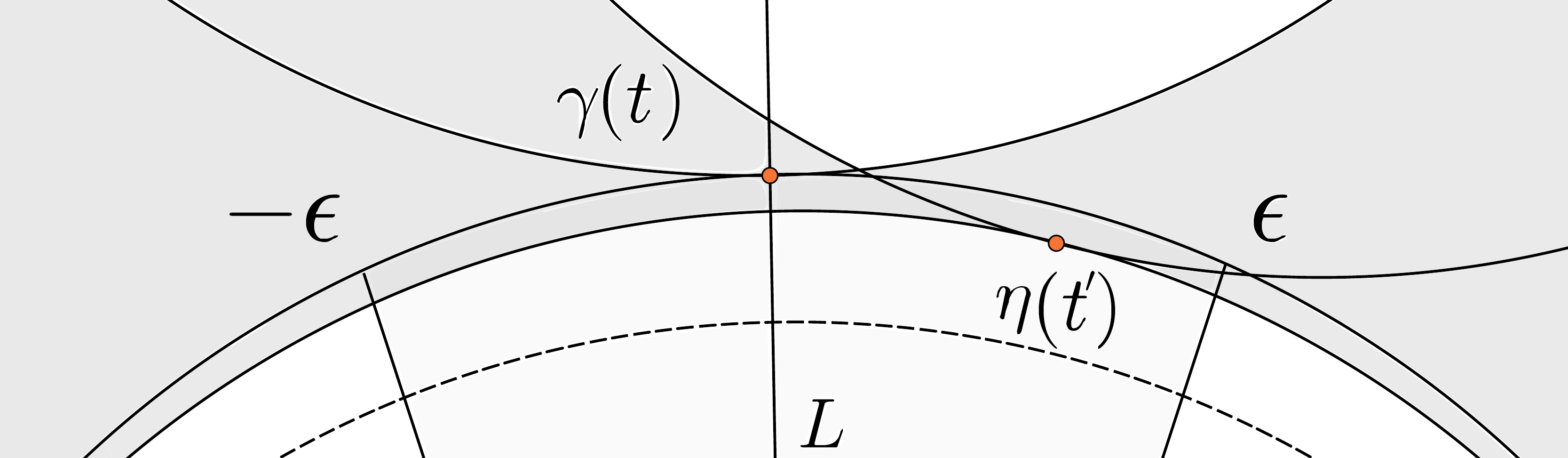}
\end{center}
\caption{The variation of the angle of the derivative vector between $\gamma$ and an arbitrarily close bounded curvature path $\eta$ in the arc $|\theta|<\epsilon$ in the dashed circle can be chosen to be less than $\delta$, compare with Figure \ref{figwedges1}.}
\label{figwedges2}
\end{figure}}

\begin{proposition}\label{homotopyfragment} A fragment satisfying Lemma \ref{conticurv} is bounded-homotopic to its replacement path.
\end{proposition}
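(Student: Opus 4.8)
# Proof Proposal for Proposition \ref{homotopyfragment}

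\textbf{Overall approach.} The plan is to construct an explicit bounded curvature homotopy carrying a fragment $\gamma$ onto its replacement path $\beta$, by decomposing the construction region-by-region according to the three components of $\beta$ (the two circular arcs near the endpoints and the middle line segment), exactly as indicated in Figure \ref{figsmallhomot}. The homotopy will be the radial homotopy $R(p,t)$ in the two wedge regions adjacent to $x$ and $y$, and the orthogonal homotopy $O(p,t)$ in the middle region, glued together continuously. The key technical input — that these homotopies preserve the curvature bound — is supplied by Lemma \ref{homotopycurvature} and Corollary \ref{homotopycurvaturecoro}, while the fact that the curvature stays bounded along the whole fragment (not just at the second-order level) is Lemma \ref{conticurv}, with Lemma \ref{finefrag} guaranteeing we may assume the fragment is fine enough for this to apply.

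\textbf{Key steps, in order.} First I would invoke Lemma \ref{finefrag} to reduce to a fragment $\gamma$ fine enough that Lemma \ref{conticurv} applies uniformly along it; by Remark \ref{fragd} and Proposition \ref{r1r3pr} the fragment lies in ${\mathcal R}(\mbox{\sc z})$ and satisfies proximity condition {\sc D}, so the replacement path $\beta$ from Proposition \ref{construct} exists with circular components of length less than $\pi$. Second, I would set up the local coordinates of Remark \ref{remainsbcp} at each relevant point: near the initial point one identifies the second-order approximation of $\gamma$ with $\mbox{\sc C}_r(\mbox{\sc x})$ or $\mbox{\sc C}_l(\mbox{\sc x})$ according to the sign of the curvature there, and the radial homotopy flows this osculating circle onto the corresponding adjacent circle (the first circular component of $\beta$) while, by Lemma \ref{homotopycurvature}, driving the curvature monotonically toward $\pm 1$ from the correct side — hence never exceeding $1$ in absolute value for small $p$; iterating in $p$ (or reparametrizing) pushes the deformation through to completion. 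Third, the symmetric construction at the final point handles the second circular component. Fourth, in the middle region one applies the orthogonal homotopy $O(p,t)$, which by Corollary \ref{homotopycurvaturecoro} again preserves the bound, to straighten the middle portion onto the line segment component of $\beta$. Fifth — and this is the gluing step — one checks that the three local homotopies agree on the overlaps of the wedge regions and the middle region (they do, because in the overlap both $\gamma$ and its image are already arcs/segments matching $\beta$, or one tapers the homotopy parameter to zero there using a partition-of-unity cutoff in $t$), so that the concatenation is $C^1$ in $t$, continuous in $p$, and fixes the endpoints with their tangent data throughout. Finally, one records that at $p=1$ the resulting path is exactly $\beta$ and that every intermediate path is an immersed $C^1$, piecewise $C^2$ path with $|\kappa|\le 1$ and the correct endpoint condition, i.e. an element of $\Gamma(\mbox{\sc x,y})$, and moreover has the same winding number as $\gamma$ since the deformation is through immersions confined to ${\mathcal R}(\mbox{\sc z})$.

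\textbf{Main obstacle.} The hard part will be the gluing in step five: ensuring that the wedge-region radial homotopies and the middle-region orthogonal homotopy can be combined into a single homotopy that is simultaneously $C^1$ across the junctions \emph{and} respects $|\kappa|\le 1$ at the junctions at every parameter $p$. The curvature estimates of Lemma \ref{homotopycurvature} and Corollary \ref{homotopycurvaturecoro} are infinitesimal (first-order in $p$ at $(0,0)$), so propagating them to a finite homotopy requires a compactness/continuity argument along the fragment — this is precisely what Lemma \ref{conticurv} buys us, but one must check the $\delta$ obtained there can be taken uniform over the finitely many subfragments and over the gluing locus, which is where the $C^1$ regularity of $\gamma$ (and the freedom to refine the fragmentation) is used decisively. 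A secondary point to verify is that the radial and orthogonal homotopies keep the deformed path inside ${\mathcal R}(\mbox{\sc z})$ throughout, as flagged in the second bullet of Remark \ref{remainsbcp}; this guarantees the local adjacent-circle barriers remain valid and the winding number is unchanged, so that the constructed homotopy is genuinely a bounded curvature homotopy in the sense of Definition \ref{hom_adm}.
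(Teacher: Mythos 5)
Your proposal follows essentially the same route as the paper's proof: construct the replacement path via Proposition \ref{construct}, then build the homotopy as a piecewise combination of the radial homotopy on the two side (wedge) regions and the orthogonal homotopy in the middle, with the curvature bound preserved by Lemma \ref{homotopycurvature} and Corollary \ref{homotopycurvaturecoro} and the construction justified by the $C^1$ closeness of the fragment and its replacement path. The gluing and propagation issues you flag as the main obstacle are precisely the details the paper leaves to the reader, so your elaboration is consistent with, and somewhat more careful than, the published argument.
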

\begin{proof} Consider a fragment satisfying Lemma \ref{conticurv}. We apply Proposition \ref{construct} to construct a replacement path. By Lemma \ref{homotopycurvature} and Corollary \ref{homotopycurvaturecoro}, in such a fragment, the radial and orthogonal homotopy preserve the bounded curvature property. Since the fragment and the replacement path are $C^1$ close, the construction of a homotopy between them is a piecewise combination of two radial projections (sides) and a orthogonal projection (middle) onto the replacement path (see Figure \ref{figsmallhomot}). We leave the details to the reader.\end{proof}

\begin{theorem} A bounded curvature path is bounded-homotopic to a cs path.
\end{theorem}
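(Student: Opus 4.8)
The plan is to reduce the statement to what has already been established for a single fragment. First I would invoke Lemma \ref{finefrag} to fix a fragmentation $0=t_0<t_1<\cdots<t_m=s$ of the given bounded curvature path $\gamma:[0,s]\to\mathbb R^2$ whose fragments $\gamma_i:=\gamma|_{[t_{i-1},t_i]}$ all satisfy Lemma \ref{conticurv}; since a fragmentation is by definition a \emph{finite} sequence, there are finitely many fragments. For each $i$, Proposition \ref{construct} produces a replacement path $\beta_i$, which is a {\sc csc} path sharing the endpoint condition of $\gamma_i$; that is, $\beta_i$ starts at $\gamma(t_{i-1})$ with derivative $\gamma'(t_{i-1})$ and ends at $\gamma(t_i)$ with derivative $\gamma'(t_i)$. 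By Proposition \ref{homotopyfragment}, each $\gamma_i$ is bounded-homotopic to $\beta_i$; denote such a homotopy by $H_i(p,\cdot)$, $p\in[0,1]$, and note that every intermediate path $H_i(p,\cdot)$ lies in $\Gamma(\mbox{\sc x}_i,\mbox{\sc y}_i)$ with $\mbox{\sc x}_i=(\gamma(t_{i-1}),\gamma'(t_{i-1}))$ and $\mbox{\sc y}_i=(\gamma(t_i),\gamma'(t_i))$. In particular the position and tangent data at $t_{i-1}$ and $t_i$ are held fixed throughout $H_i$.

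The second step is to glue these homotopies into one. For each $p\in[0,1]$ let $H(p,\cdot)$ be the concatenation of $H_1(p,\cdot),\dots,H_m(p,\cdot)$, reparametrized by arc length as in Remark \ref{bcurvhomot}. Because the terminal endpoint condition of $H_i(p,\cdot)$ equals the initial endpoint condition of $H_{i+1}(p,\cdot)$ for every $p$ — both being $\mbox{\sc y}_i=\mbox{\sc x}_{i+1}$, independent of $p$ — the positions and the tangent directions agree at each junction $t_i$, so $H(p,\cdot)$ is $C^1$; it is piecewise $C^2$ since each $H_i(p,\cdot)$ is; its curvature is bounded by $1$ because each $H_i(p,\cdot)$ has curvature bounded by $1$; and its initial and final endpoint data are those of $\gamma$, inherited from $H_1$ and $H_m$. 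Hence $H(p,\cdot)\in\Gamma(\mbox{\sc x,y})$ for all $p$, and $p\mapsto H(p,\cdot)$ is continuous in the $C^1$ metric because each $H_i$ is continuous and the total arc length depends continuously on $p$. At $p=0$ one recovers $\gamma$, and at $p=1$ one obtains the concatenation $\beta_1\ast\cdots\ast\beta_m$, a finite concatenation of arcs of unit circles and line segments, i.e. a $cs$ path (deleting any zero-length components does not affect this). Thus $H$ is the required bounded curvature homotopy.

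The point that needs care — and the place the argument would fail if the earlier propositions were not set up as stated — is precisely the $C^1$-gluing at the fragmentation points. This works only because the homotopy of each fragment furnished by Proposition \ref{homotopyfragment} is a homotopy \emph{rel endpoint condition}: not merely the positions $\gamma(t_i)$ but also the tangent vectors $\gamma'(t_i)$ are pinned down for every value of the homotopy parameter, so matching of tangents between consecutive deformed fragments is automatic and the concatenation never develops a corner. A subsidiary technical point is the arc-length reparametrization: one must check that the total length of $H(p,\cdot)$ varies continuously with $p$ so that $H$ is continuous as a map into $\Gamma(\mbox{\sc x,y})$ equipped with the $C^1$ topology, but this is immediate from the $C^1$-continuity of the $H_i$. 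Finally, since $m$ is finite, the resulting $cs$ path has finite complexity, as demanded by the definition of a $cs$ path.
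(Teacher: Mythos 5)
Your proposal is correct and follows essentially the same route as the paper: fragment the path so that each fragment satisfies Lemma \ref{conticurv} (Lemma \ref{finefrag}), apply Proposition \ref{homotopyfragment} to deform each fragment onto its {\sc csc} replacement path, and concatenate. The paper leaves the gluing of the fragment homotopies implicit, whereas you spell out the $C^1$ matching at the fragmentation points rel the fixed endpoint data, which is a welcome but not divergent elaboration.
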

\begin{proof} Consider $\gamma \in \Gamma(\mbox{\sc x,y})$ together with a fragmentation of $\gamma$ with fragments satisfying Lemma \ref{conticurv}. Then by applying Proposition \ref{homotopyfragment} to each fragment the result follows.
\end{proof}

A step towards the classification of homotopy classes of bounded curvature paths is a proof of Theorem \ref{embdub} (refer to \cite{papera}) and Theorem \ref{singudub} (refer to \cite{paperb}) for homotopy classes.

\begin{theorem}\label{mininhomot} A homotopy class of bounded curvature paths contains a minimal length path.
\end{theorem}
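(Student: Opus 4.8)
The plan is to combine the three facts already assembled in this section: (1) every bounded curvature path is bounded-homotopic to a $cs$ path (the theorem immediately preceding this one); (2) a $cs$ path containing an admissible component reduces, via Proposition \ref{lengthred}, to a $cs$ path of strictly smaller complexity whose length does not exceed that of the original; and (3) the existence results for minimal length paths in $\Gamma(\mbox{\sc x,y})$ and $\Gamma(n)$, namely Theorems \ref{embdub} and \ref{singudub}. Fix a homotopy class $\mathcal{C}\subset \Gamma(\mbox{\sc x,y})$. First I would pass to the $cs$ paths inside $\mathcal{C}$: by (1) every path in $\mathcal{C}$ is bounded-homotopic to a $cs$ path, and that $cs$ path lies in the same class $\mathcal{C}$, so it suffices to find a minimal length element among the $cs$ paths in $\mathcal{C}$.

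Next I would show that within $\mathcal{C}$ one may restrict attention to $cs$ paths of bounded complexity. Repeatedly applying Proposition \ref{lengthred}, any $cs$ path in $\mathcal{C}$ containing an admissible component is bounded-homotopic (hence stays in $\mathcal{C}$) to one of lower complexity and no greater length; iterating, one reaches a $cs$ path in $\mathcal{C}$ with no admissible component, which by the Dubins-type reduction already proved in \cite{papera} (and extended in \cite{paperb}) is forced into one of the finitely many normal forms of Theorem \ref{embdub} or Theorem \ref{singudub} — that is, \textsc{csc}, \textsc{ccc}, or one of the symmetric/skew $\mbox{\sc c}^\chi$-type words, with $\chi$ the minimal crossing number for the winding number of $\mathcal{C}$. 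In particular these reduced representatives have complexity bounded by a constant depending only on $\mbox{\sc x,y}$ and $n$. So the infimum of length over $\mathcal{C}$ equals the infimum of length over this finite list of normal forms, each of which is a finite-dimensional family of $cs$ paths parametrized by the arc-lengths of its components (and the positions of the circular arcs), subject to the endpoint and winding-number constraints.

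Finally, I would run a compactness argument on that finite-dimensional parameter space. Take a length-minimizing sequence $\gamma_k$ in $\mathcal{C}$; by the previous paragraph each may be replaced by a $cs$ path in a fixed normal form (passing to a subsequence so the form is constant along the sequence) with uniformly bounded length, hence with component arc-lengths in a compact set; the Arzelà–Ascoli theorem (or simply taking limits of the finitely many component parameters) produces a limiting $cs$ path $\gamma_\infty$ of the same normal form, satisfying the same endpoint condition, with length equal to $\inf_{\mathcal{C}}\mathcal{L}$. It remains to check $\gamma_\infty\in\mathcal{C}$: since $\gamma_k\to\gamma_\infty$ in the $C^1$ metric and, for $k$ large, $\gamma_k$ is $C^1$-close to $\gamma_\infty$, a short bounded curvature homotopy (built exactly as the radial/orthogonal construction of Proposition \ref{homotopyfragment}, or more simply by linearly interpolating the finitely many $cs$-parameters while staying within the normal-form family) connects $\gamma_k$ to $\gamma_\infty$, so $\gamma_\infty$ lies in the same homotopy class; it is then a minimal length path in $\mathcal{C}$.

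The main obstacle I anticipate is the last point — verifying that the limit $\gamma_\infty$ does not escape the homotopy class. Degenerations where some component arc-length tends to $0$ (a circular arc shrinking away, or a line segment collapsing) must be handled with care: one must confirm that such a degenerate limit is still a legitimate $cs$ path in a normal form allowed by Theorem \ref{embdub}/\ref{singudub} (those theorems explicitly permit zero-length arcs and segments, which is exactly what makes this work) and that the approach of the $\gamma_k$ to it can still be realized by a bounded curvature homotopy rather than merely a topological one. The continuity machinery of Section \ref{section3} — Lemma \ref{conticurv} and Proposition \ref{homotopyfragment} — is precisely the tool that closes this gap, so the argument hinges on applying it to the near-limit paths; once that is in place the rest is the routine compactness bookkeeping sketched above.
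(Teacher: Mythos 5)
Your proposal is correct in substance and begins exactly as the paper does: deform an arbitrary path in the class to a $cs$ path via the fragmentation machinery (Lemma \ref{conticurv}, Proposition \ref{homotopyfragment}), then drive down the complexity with Proposition \ref{lengthred}. Where you diverge is the final existence step. The paper's proof stops after the complexity reduction and simply asserts that finitely many applications of Proposition \ref{lengthred} yield ``the desired minimal length path,'' implicitly leaning on the classification of irreducible $cs$ normal forms from \cite{papera} and \cite{paperb} (Theorems \ref{embdub} and \ref{singudub}); it never addresses your (legitimate) worry that reducing one representative only produces a path of no greater length than that representative, not obviously one realizing the infimum over the whole class. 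You instead close that gap with a minimizing-sequence and compactness argument on the finite-dimensional family of normal forms, plus a $C^1$-closeness/bounded-curvature-homotopy step to keep the limit in the class. That buys rigor at exactly the point the paper is terse, though it is somewhat heavier than needed: for a fixed endpoint condition and winding number the admissible normal forms constitute an essentially finite (discrete) set, so once every path in the class is bounded-homotopic to a normal form of no greater length, the infimum over the class equals the minimum over that finite set and is attained --- no Arzel\`a--Ascoli limit is required, and the degeneration and ``does the limit stay in the class'' issues you flag as the main obstacle largely evaporate (zero-length components being permitted by Theorems \ref{embdub} and \ref{singudub}, as you note). So: same skeleton as the paper, with an added analytic completion that can be streamlined to a finiteness observation.
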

\begin{proof} Consider $\gamma \in \Gamma(\mbox{\sc x,y})$ together with a fragmentation of $\gamma$ with fragments satisfying Lemma \ref{conticurv}. By applying Proposition \ref{homotopyfragment} to all the fragments we obtain a homotopy of bounded curvature paths between $\gamma$ and a $cs$ path. By reducing the complexity of the $cs$ path applying Proposition \ref{lengthred} a finite number of times we obtain the desired minimal length path.
\end{proof}

\section{Homotopies Between Dubins Paths}

 In Theorem 110 in \cite{thesisayala} we proved (under our conventions about closure paths, see Remark \ref{convlambda}) that the global minimal length bounded curvature paths lie $\Gamma(\mbox{\rm P},k)$ for  $k=-1,0,1$. Then in Lemma 113 in \cite{thesisayala} we proved that if $\gamma\in \Delta(\Omega)$ with closure path $\lambda$ then $W_\lambda(\gamma)=1$ or $W_\lambda(\gamma)=-1$.

Next we give examples illustrating the existence of minimal length elements in $\Gamma(\mbox{\rm P},k)$.

{ \begin{figure} [[htbp]
 \begin{center}
\includegraphics[width=0.7\textwidth,angle=0]{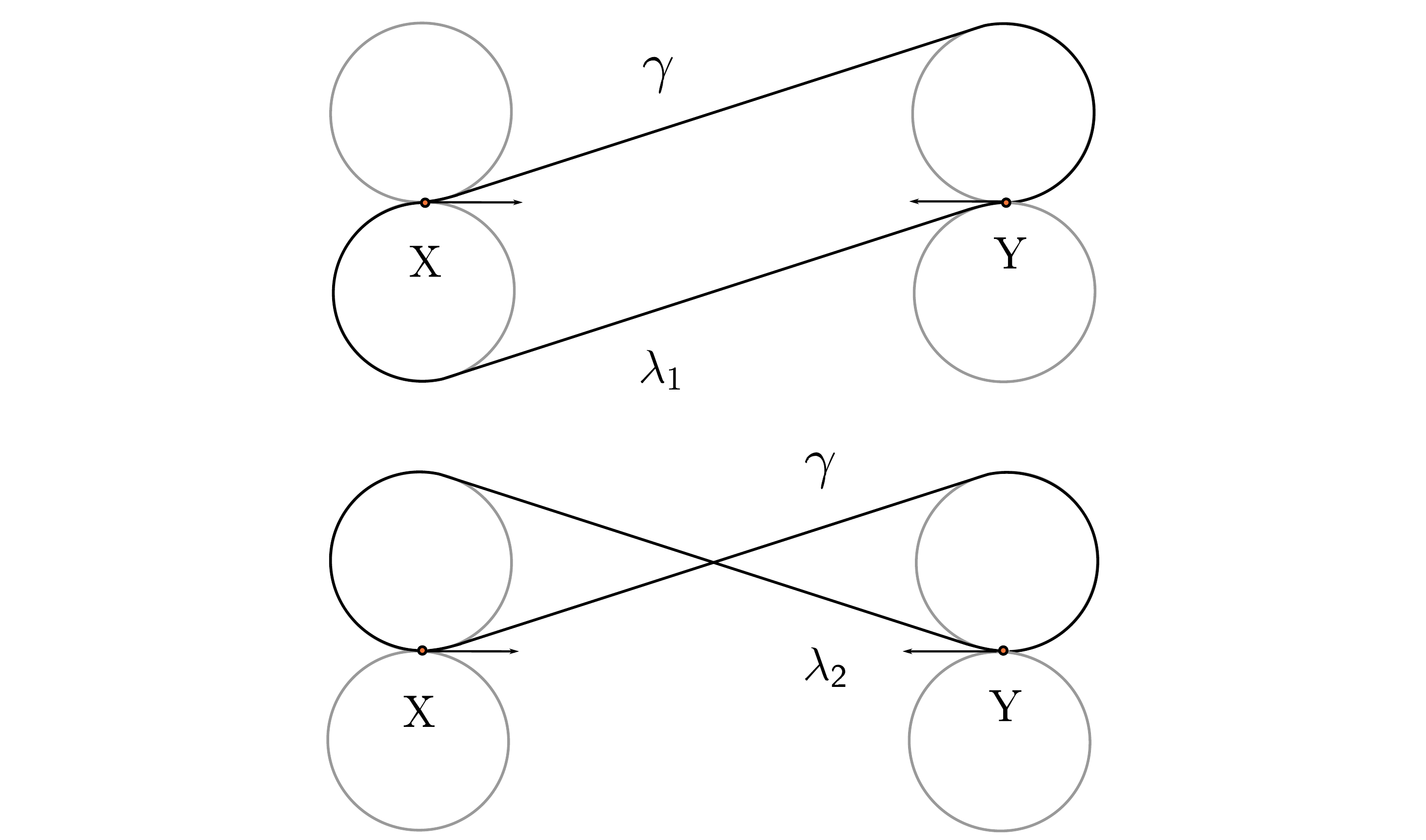}
\end{center}
\caption{Top: An example of an element in $\Gamma(\mbox{\rm A},1)$. Bottom: An example of an element in $\Gamma(\mbox{\rm A},0).$  }
\label{figwinlamb}
\end{figure}}

 In Figure \ref{figwinlamb} we show minimal length elements in $\Gamma(\mbox{\rm A},k)$ for $k=0,1$. An example for $\Gamma(\mbox{\rm A},-1)$ is obtained by reflecting the case for $k=1$ with respect to the line $y=0$. In Figure \ref{figminbd} we see cases of minimal length elements in $\Gamma(\mbox{\rm B},k)$ for $k=-1,0,1$ and in Figure \ref{figminbd} below we see cases of minimal length elements in $\Gamma(\mbox{\rm C},k)$ for $k=-1,0,1$. The existence of minimal length elements in $\Gamma(\mbox{\rm D},1)$ and $\Gamma(\mbox{\rm D},-1)$ is also immediately verified. Of course, the minimality of $\gamma$ and $\lambda$ is trivial by simply comparing the six possible minimal paths given by  Theorem \ref{embdub} for the $\mbox{\sc x,y} \in T{\mathbb R}^2$ from {\sc x} to {\sc y} and from {\sc y} to {\sc x} respectively.

{ \begin{figure} [[htbp]
 \begin{center}
\includegraphics[width=.6\textwidth,angle=0]{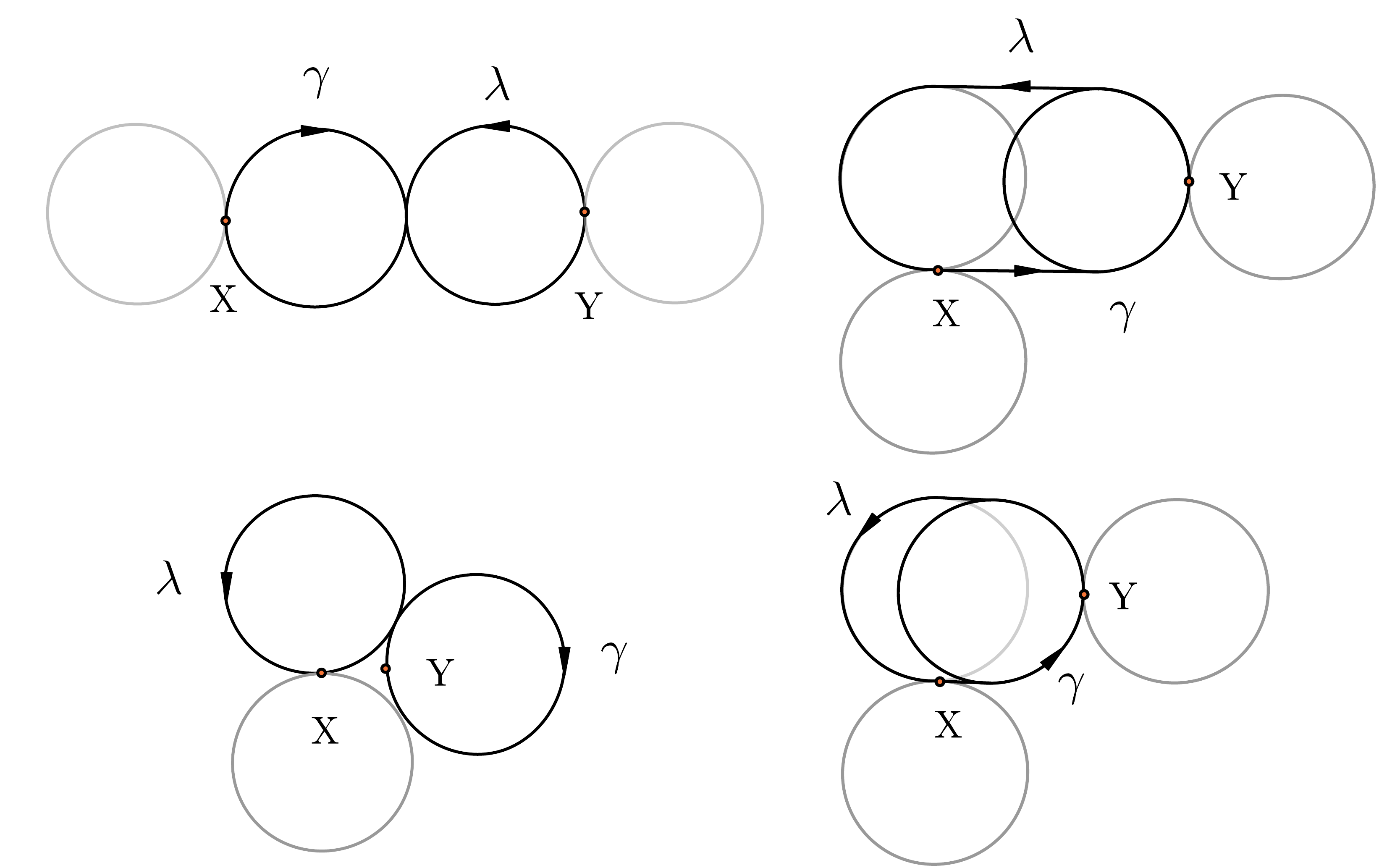}
\end{center}
\caption{Examples of minimal paths in $\Gamma(\mbox{B},k)$ and $\Gamma(\mbox{C},k)$ with $k=-1,0,1$. The cases where $k=1$ are obtained by reflecting the cases for $k=-1$ with respect to the line $y=0$. The arrows imply path orientation.}
 \label{figminbd}
\end{figure}}

A curious fact is that sometimes the minimal length path in the spaces of paths not in $\Omega$ is a winding number zero path. Even more, there are examples where there are two minimal length elements in the spaces of paths not in $\Omega$ lying in different homotopy classes, compare Figure \ref{figminext}.

\begin{remark}\label{minuniq} Given $\mbox{\sc x,y} \in T{\mathbb R}^2$. Then after choosing the closure path $\lambda$ (which may not be unique, but we fix a preferred one) the minimal length path from {\sc x} to {\sc y} may not be unique. These minimal paths can lie in the same homotopy class, or be in different homotopy classes. In general, we refer to {\it uniqueness of minimal length paths} {\it up to  operations on cs paths}.
\end{remark}

{ \begin{figure} [[htbp]
 \begin{center}
\includegraphics[width=.7\textwidth,angle=0]{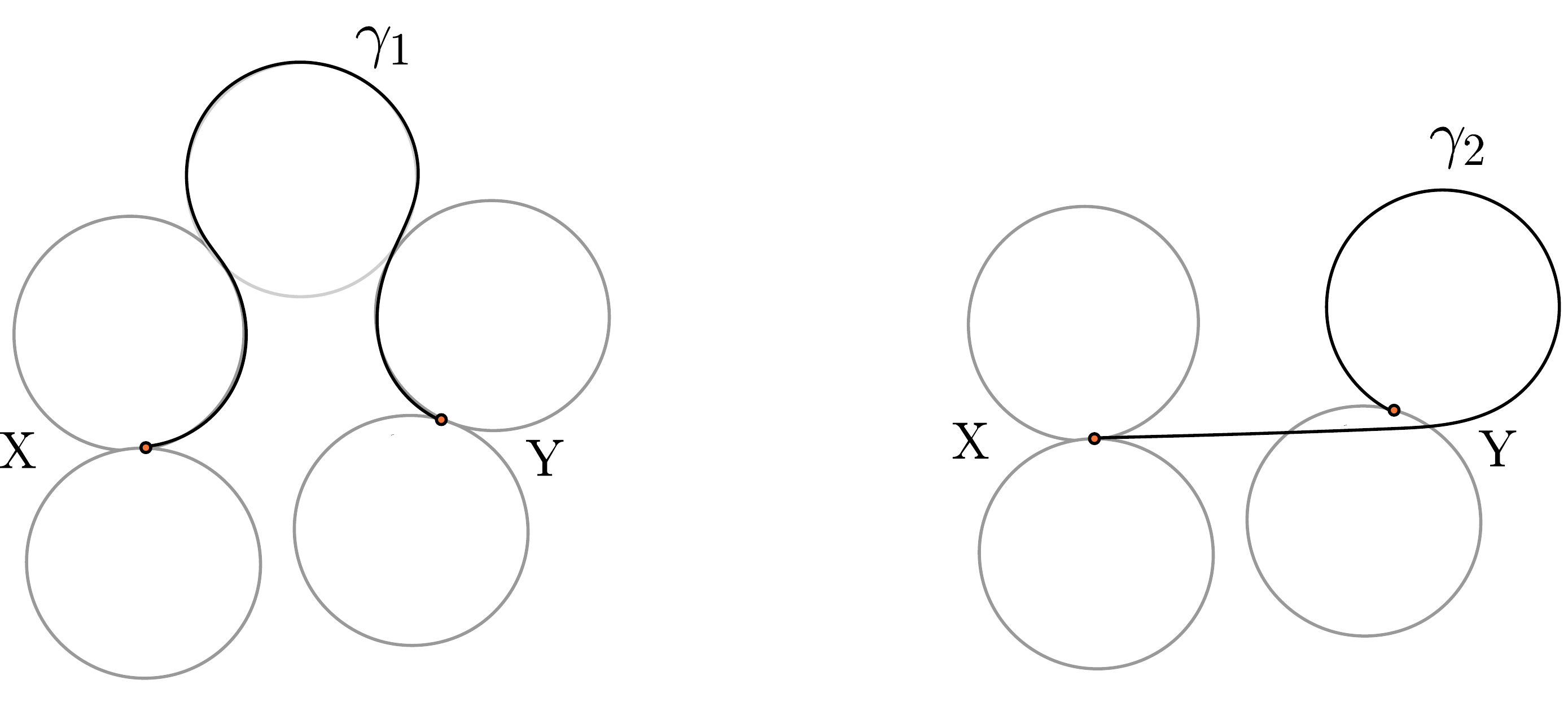}
\end{center}
\caption{The paths $\gamma_1$ and $\gamma_2$ are candidates for being minimal length elements in the spaces of paths not in $\Omega$. It is not hard to see that these paths have winding number one and zero respectively.}
 \label{figminext}
\end{figure}}

Next we enumerate some {\it simple facts} about Dubins paths. These facts are related with the proximity condition satisfied by the associated endpoint conditon. Their verification is immediate and we leave this to the reader.

\begin{remark}\label{minhomot}For $\mbox{\sc x,y} \in T{\mathbb R}^2$ and $k=-1,0,1$ we have:
\begin{enumerate}
\item  The only Dubins paths in $\Gamma(\mbox{\rm A},k)$ are the {\sc csc} paths.
\item The spaces $\Gamma(\mbox{\rm B},k)$ contain {\sc csc} and {\sc ccc} paths. But a minimal Dubins path must be a {\sc csc} path.
\item The spaces $\Gamma(\mbox{\rm C},k)$ contain {\sc csc} and {\sc ccc} paths. A minimal Dubins path can be a {\sc csc} path or a {\sc ccc} path.
\item If a minimal Dubins path is {\sc ccc} then it must lie in a space of type $\Gamma(\mbox{\rm C},k)$.
\item The spaces $\Gamma(\mbox{\rm D},k)$ contain {\sc csc} and {\sc ccc} paths. But a minimal Dubins path must be a {\sc csc} path.
\end{enumerate}
\end{remark}

Next we describe some explicit homotopies between bounded curvature paths. Our aim is not to make a long list of possibilities, we rather prefer to discuss the less obvious cases.

\begin{proposition}\label{skew} An {\sc rsl} Dubins path can sometimes be deformed into a {\sc lsr} Dubins path.
\end{proposition}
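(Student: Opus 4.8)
The plan is to exhibit, for an explicit sub-case, a one-parameter family of bounded curvature paths starting from an {\sc rsl} Dubins path and ending at an {\sc lsr} Dubins path with the same endpoint condition. The key observation is that both words share the middle straight component {\sc s} and differ only in the orientation of the two circular end components; the natural idea is to shrink the two arcs of the {\sc rsl} path down to zero length, pass through a configuration where the path is (up to reparametrisation) a single line segment or a short {\sc csc} with degenerate arcs, and then grow two oppositely oriented arcs to form the {\sc lsr} path. This is exactly the kind of move permitted by the operations on $cs$ paths and by the homotopies developed in Section~\ref{section3}, so the intermediate paths stay in $\Gamma(\mbox{\sc x,y})$.

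First I would fix coordinates as in Remark~\ref{coord}, so $x$ is the origin and $X=(1,0)$, and restrict to the endpoint condition for which the {\sc rsl} and {\sc lsr} paths both exist and are genuinely distinct; this forces a proximity condition in which the adjacent circles are close, i.e. essentially condition (iv)/D (or a degenerate sub-case of B or C), which is why the statement says ``sometimes''. Next I would write down the {\sc rsl} path $\gamma_0$ explicitly: an initial clockwise arc on $\mbox{\sc C}_r(\mbox{\sc x})$ through angle $a_0$, a segment, then a counterclockwise arc on $\mbox{\sc C}_l(\mbox{\sc y})$ through angle $b_0$. Then I would describe the deformation in stages: (1) using an operation of type~I or a direct $cs$-homotopy, continuously decrease the lengths of the two arcs and adjust the segment so that the concatenation remains $C^1$ and has curvature $\le 1$ throughout, until both arcs have length zero and the path is a single line segment $\gamma_{1/2}$ joining $x$ to $y$ (this requires $X$ and $Y$ to be collinear with the segment $xy$, which is part of the ``sometimes''); (2) reverse the process, growing a counterclockwise arc on $\mbox{\sc C}_l(\mbox{\sc x})$ and a clockwise arc on $\mbox{\sc C}_r(\mbox{\sc y})$, arriving at the {\sc lsr} path $\gamma_1$. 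Each stage is a composition of the elementary $cs$-path moves together with Proposition~\ref{homotopyfragment}, so bounded curvature is preserved at every parameter value.

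The main obstacle I expect is verifying that the curvature bound $\kappa\le 1$ is maintained at the moments when an arc is being born or is shrinking to zero length: near such a transition the path is nearly straight but has a short highly-curved arc, and one must check the $C^1$-gluing of the arc to the adjacent segment keeps the absolute curvature at most $1$ rather than overshooting. The way around this is precisely Lemma~\ref{homotopycurvature} and Corollary~\ref{homotopycurvaturecoro}: the arcs in question are (locally) the adjacent circles $\mbox{\sc C}_r$, $\mbox{\sc C}_l$ of curvature exactly $\pm 1$, and the radial/orthogonal homotopies decrease $|\kappa|$ (Lemma~\ref{homotopycurvature}, second bullet), so the deformation can be arranged never to increase curvature beyond $1$; applying Proposition~\ref{homotopyfragment} fragment-by-fragment then gives a genuine bounded curvature homotopy. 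A secondary point to address is that the winding numbers of $\gamma_0$ and $\gamma_1$ must agree for such a homotopy to possibly exist — Corollary~\ref{wcpm} — and one checks directly from~(\ref{winumb}) that, in the sub-case chosen, the {\sc rsl} and {\sc lsr} representatives have equal total turning and hence lie in the same $\Gamma(n)$. I would then remark that for endpoint conditions outside this sub-case (e.g. when the two arcs cannot both degenerate), the {\sc rsl} and {\sc lsr} paths may lie in different homotopy classes, which accounts for the qualifier ``sometimes'' and foreshadows the dichotomy in Theorem~\ref{classificationbcps}.
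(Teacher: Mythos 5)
There is a genuine gap in stage (1). A bounded curvature homotopy must keep every intermediate path in $\Gamma(\mbox{\sc x,y})$, i.e.\ with the \emph{same} fixed endpoints and tangents (Definition~\ref{hom_adm}). But for a fixed endpoint condition, the {\sc rsl} paths form a discrete set: the two arc angles and the segment length are pinned down (up to multiples of $2\pi$) by the requirement that the path start at $(x,X)$ and end at $(y,Y)$. So you cannot ``continuously decrease the lengths of the two arcs and adjust the segment'' while staying $C^1$, curvature-bounded, \emph{and} anchored at the prescribed endpoints --- the family you describe drifts off the endpoint condition, so it is not a homotopy in $\Gamma(\mbox{\sc x,y})$ at all. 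Passing through a single line segment joining $x$ to $y$ is even more restrictive: it requires the segment itself to be an element of $\Gamma(\mbox{\sc x,y})$, in which case the ``{\sc rsl}'' and ``{\sc lsr}'' paths you would connect are the degenerate path with both arcs of zero length, and the statement becomes vacuous. The appeal to Lemma~\ref{homotopycurvature} and Proposition~\ref{homotopyfragment} does not repair this: those results deform a short fragment onto its replacement path; they say nothing about shrinking or growing a full circular component of a {\sc csc} path while the endpoints are held fixed, and in particular they do not control the birth/death of an arc of curvature $\pm 1$ adjacent to a segment. The winding-number check at the end is fine but beside the point, since the family itself is not admissible.

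The paper's proof avoids this entirely by working with moves that manifestly fix the endpoints: starting from the {\sc rsl} path, an operation of type~I applied at a point of the middle segment twists the two halves into a figure~$8$ (two oppositely oriented loops), and these loops are then translated along the path until they sit on $\mbox{\sc C}_l(\mbox{\sc x})$ and $\mbox{\sc C}_r(\mbox{\sc y})$, where they unfold into the {\sc lsr} path. Every intermediate path is a $cs$ path in $\Gamma(\mbox{\sc x,y})$ with curvature $\pm 1$ or $0$, so the curvature bound and the endpoint condition are preserved throughout. If you want to keep the spirit of your argument, you would need to replace stage (1) by an explicit family of (non-{\sc csc}) bounded curvature paths with the fixed endpoints interpolating between the two Dubins words, and verify the curvature bound along it --- which is essentially what the type~I operation already packages for you.
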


\begin{proof} Here we diagrammatically illustrate our claim in Figure \ref{figskewdef} from left to right. In the first step we start with an {\sc rsl} Dubins path and select a point in its middle component. Next we apply an operation of {\it type I} to the middle component to obtain a figure 8 shape. In the third step we translate these oppositely oriented loops to lie in the circles $\mbox{\sc C}_l(\mbox{\sc x})$ and $\mbox{\sc C}_r(\mbox{\sc y})$ to obtain in step five the desired {\sc lsr} Dubins path. \end{proof}

{ \begin{figure}[htbp]
 \begin{center}
\includegraphics[width=1\textwidth,angle=0]{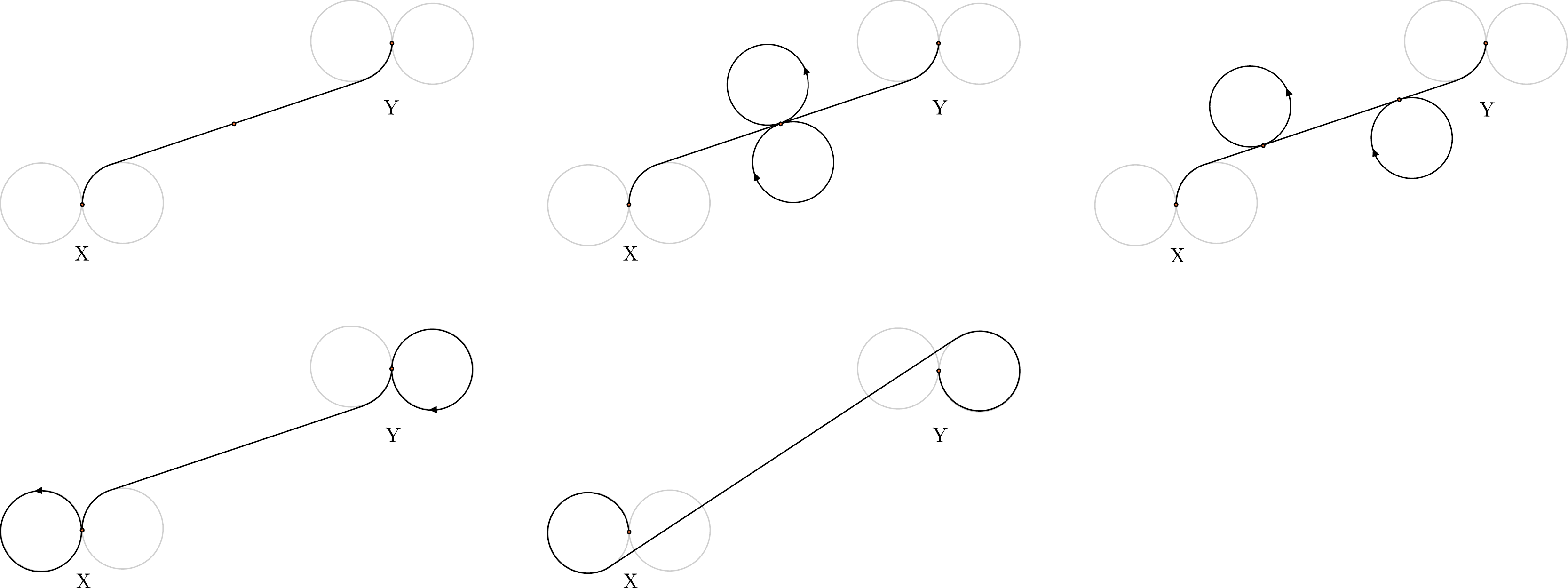}
\end{center}
\caption{Select a point in a {\sc rsl} path to apply an operation of type I and then a translation we homotope a {\sc rsl} into a {\sc lsr} path.}
\label{figskewdef}
\end{figure}}

The next result gives an explicit homotopy of bounded curvature paths between Dubins paths of type {\sc ccc} and {\sc csc} under proximity condition {\rm B} and between Dubins paths of type {\sc ccc} satisfying proximity condition {\rm D}. We alert the reader that the next construction can be performed with fewer operations. However, we present the sequence of moves in Figure \ref{fighomotopy} in order to make the exposition clearer.

\begin{proposition}\label{homotopy2}\rm The paths $\gamma_1$ and $\gamma_2$ in Figure \ref{figcomphop} are in the same homotopy class.
\end{proposition}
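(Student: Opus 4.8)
The plan is to give a diagrammatic homotopy, entirely within the world of $cs$ paths, following the same style as Proposition \ref{skew}. Since $\gamma_1$ and $\gamma_2$ in Figure \ref{figcomphop} are Dubins paths, they are in particular $cs$ paths, so every intermediate stage of the deformation can be taken to be a $cs$ path; this makes it transparent at each step that the deformed path is still a bounded curvature path. The only tools needed are the two families of moves already introduced: operations of type I (twisting the path about a chosen rotation-axis point to create or absorb a figure-8 pair of oppositely oriented loops) and operations of type II (pushing a line-segment component with a disk), together with translations of loops along adjacent circles as used in the proof of Proposition \ref{skew}. Throughout I would invoke Proposition \ref{lengthred} freely to cancel admissible components produced along the way, so that the complexity of the $cs$ path stays controlled.

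The key steps, in order, are: (1) split into the two cases indicated in the statement — the pair $\gamma_1$, $\gamma_2$ of type {\sc ccc} and {\sc csc} under proximity condition {\rm B}, and the pair of type {\sc ccc} paths under proximity condition {\rm D}; (2) in the first case, start from the {\sc ccc} path, pick the axis point on its middle arc, and apply an operation of type I to trade the middle circular arc for a figure-8; (3) slide the two resulting loops onto the appropriate adjacent circles $\mbox{\sc C}_l(\mbox{\sc x})$, $\mbox{\sc C}_r(\mbox{\sc x})$ (or $\mbox{\sc C}_l(\mbox{\sc y})$, $\mbox{\sc C}_r(\mbox{\sc y})$), which is where proximity condition {\rm B} is used — it guarantees the geometric room to carry out the translation while keeping curvature bounded; (4) absorb the now-redundant loop into the adjacent arc and apply an operation of type II to straighten the remaining configuration into the target {\sc csc} path, reducing complexity via Proposition \ref{lengthred} as needed; (5) in the proximity condition {\rm D} case, run the analogous sequence between the two {\sc ccc} representatives, using the region $\Omega$ (or the small-arc configuration) to certify that the intermediate $cs$ paths never need to leave the admissible region. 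The sequence of pictures in Figure \ref{fighomotopy} records exactly this chain of moves, and the verification that each move is a bounded curvature homotopy is the content of the operations of type I and II together with Propositions \ref{homotopyfragment} and \ref{lengthred}.

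The main obstacle is step (3)–(4): checking that the translation of the figure-8 loops onto the adjacent circles, and the subsequent operation of type II, can genuinely be performed without the curvature exceeding $1$ at any stage, and that this is precisely the point at which the proximity condition ({\rm B} in the first case, {\rm D} in the second) is indispensable. In the {\rm B} case one must verify that the loops fit along $\mbox{\sc C}_l(\mbox{\sc x})$ and $\mbox{\sc C}_r(\mbox{\sc y})$ given only $d(c_l(\mbox{\sc x}),c_l(\mbox{\sc y}))<4$ or the mirror inequality; in the {\rm D} case one must confirm that the homotopy stays outside the forbidden region associated with $\Omega$. Rather than grind through these estimates analytically, I would present the sequence of moves diagrammatically in Figure \ref{fighomotopy}, as the authors do for Proposition \ref{skew}, and remark that the curvature bound is preserved at each stage by construction of the type I and type II operations, leaving the elementary geometric checks to the reader.
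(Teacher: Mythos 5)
Your proposal is essentially the paper's own argument: an explicit diagrammatic bounded curvature homotopy through $cs$ paths (recorded in Figure \ref{fighomotopy}), treating the {\rm B} case ({\sc ccc} to {\sc csc}) and the {\rm D} case ({\sc ccc} to {\sc ccc}, using $d(c_r(\mbox{\sc x}),c_r(\mbox{\sc y}))<4$) by one and the same chain of moves, with the curvature checks left to the pictures. The paper's choreography differs only in detail --- it first slides the {\sc rl} part of the {\sc lrl} path to create an {\sc lrlsl} configuration, rotates circles to form the figure 8, and then unfolds it using parallel tangents, rather than applying a type I move directly to the middle arc and finishing with a type II move --- but this is a variation within the same method, not a different route.
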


{ \begin{figure}[htbp]
 \begin{center}
\includegraphics[width=.75\textwidth,angle=0]{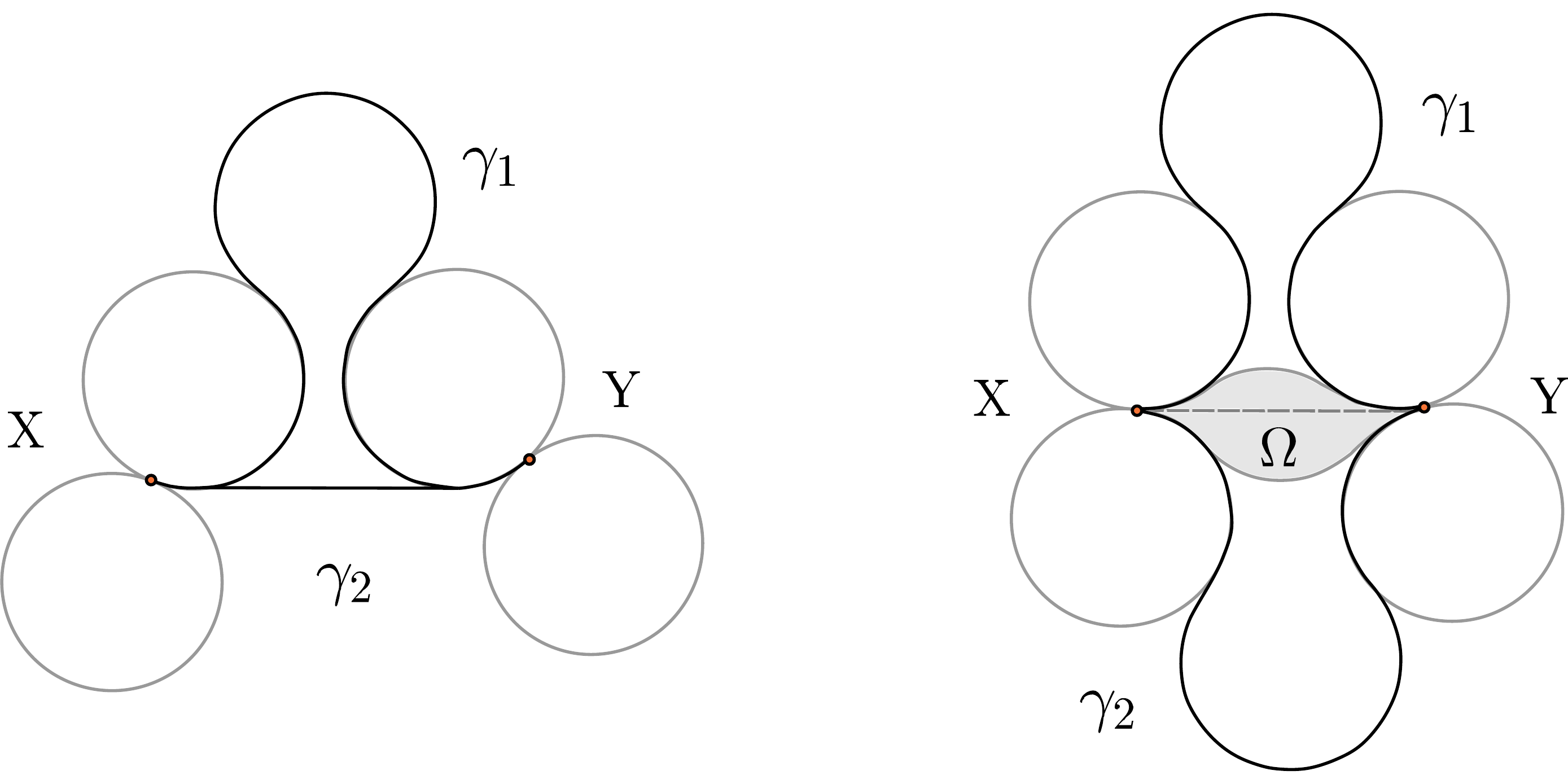}
\end{center}
\caption{Left: The paths $\gamma_1,\gamma_2\in \Gamma(\mbox{B},1)$ are bounded-homotopic. Right: The paths $\gamma_1$ and $\gamma_2$ are bounded-homotopic. By Corollary \ref{mainresultp1}  we have that $\gamma_1,\gamma_2\notin  \Delta(\Omega)$.}
\label{figcomphop}
\end{figure}}

\begin{proof} We first prove that there is a bounded curvature homotopy between the paths $\gamma_1$ and $\gamma_2$ satisfying condition {\rm B} giving an explicit construction of the homotopy, see left illustration in Figure \ref{figcomphop}.
We label each step in Figure \ref{fighomotopy} from left to right starting with 1 and finishing at 10.
In step 1 we start with the path $\gamma_1$ of type {\sc lrl} as given at the left in Figure \ref{figcomphop}. In step 2 we move the {\sc rl} part of $\gamma_1$ towards the left. In step 3 we obtain a {\sc lrlsl} path. In step 4 we rotate the upper circle, corresponding to the {\sc r} section in the {\sc lrlsl} path, clockwise. In step 5 another {\sc lrlsl} path is obtained. In step 6 we rotate clockwise the first component of the path obtained in the last step. In steps 8 and 9, using the existence of parallel tangents, we unfold the figure 8 shape obtained in step 7 to obtain in step 10 a path bounded-homotopic to the path $\gamma_2$.
The same procedure may be applied to obtain a homotopy of bounded curvature paths between $\gamma_1$ and $\gamma_2$ satisfying condition {\rm D} (right side in Figure \ref{figcomphop}). In such a case, the path $\gamma_2$ in step 10 is of the same type as $\gamma_1$ since under condition {\rm D} we also have that $d(c_r(x),c_r(y))<4$.
\end{proof}

{ \begin{figure} [[htbp]
 \begin{center}
\includegraphics[width=1.0\textwidth,angle=0]{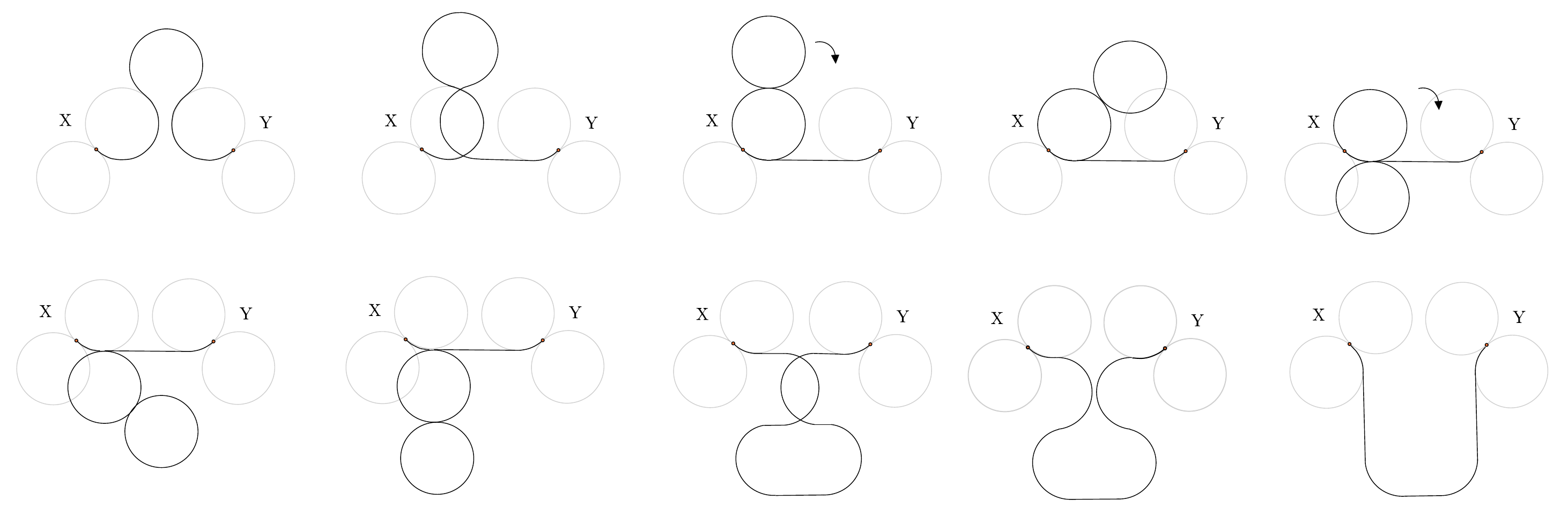}
\end{center}
\caption{A bounded curvature homotopy between the paths $\gamma_1$ and $\gamma_2$ at the left in Figure \ref{figcomphop}.}
 \label{fighomotopy}
\end{figure}}

\section{The Classification Theorem in $\Gamma(k)$}

Next we prove that $\Delta(\Omega)$ and $\Delta'(\Omega)$ are maximal path connected sets.
\begin{definition} A homotopy class is said to be free if it contains a free path. We denote by $[\gamma]$ the homotopy class of the path $\gamma$.
\end{definition}

\begin{theorem}\label{singlehomot} If $\mbox{\sc x},\mbox{\sc y}\in T{\mathbb R}^2$ carries a region $\Omega$. The spaces $\Gamma(\mbox{\rm  D},k)$ are such that:

\begin{itemize}
\item $\Delta(\Omega)$ is not a {\it free} homotopy class.
\item $\Delta'(\Omega)$ is a {\it free} homotopy class.
\end{itemize}
\end{theorem}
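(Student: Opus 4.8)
The plan is to establish the two bullet points separately, the first being essentially a restatement of results already imported from \cite{paperc}, and the second requiring a genuine construction.

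For the first bullet point, recall that under proximity condition {\rm D} with a region $\Omega$, Corollary \ref{mainresultp1} tells us that $\Gamma(\mbox{\sc x,y})$ is partitioned into $\Delta(\Omega)$ and $\Delta'(\Omega)$ as distinct homotopy classes, and that elements of $\Delta(\Omega)$ are not free paths. So I would first argue that $\Delta(\Omega)$, restricted to a fixed winding number $k$, is precisely one homotopy class (not merely a union of them): by Theorem \ref{insbound} every path in $\Delta(\Omega)$ has length bounded by a universal constant, and by Theorem \ref{mininhomot} every homotopy class meeting $\Delta(\Omega)$ contains a minimal length path which, by Theorem \ref{singudub}, is one of the listed normal forms; the embeddedness and confinement in the compact $\Omega$ (together with Corollary \ref{cannotsing}, which rules out self-intersections) pins this down to the {\sc csc}-type minimal Dubins path in $\Gamma(\mbox{\rm D},k)$, and Proposition \ref{homotopy2} (right side, condition {\rm D}) together with the operations on $cs$ paths shows that all such embedded paths in $\Omega$ are mutually bounded-homotopic. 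Since by Lemma 113 in \cite{thesisayala} $\Delta(\Omega)$ forces $W_\lambda=\pm1$, the relevant $k$ is $\pm1$. Once $\Delta(\Omega)$ is seen to be a single homotopy class, the statement that it is not free is immediate from Corollary \ref{mainresultp1}: no path in it is free, so by definition the class is not free.

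For the second bullet point I would produce an explicit free path in $\Delta'(\Omega)$. Geometrically, when $\Omega$ is present the points $x,y$ are close (condition (iv) gives $d(x,y)<4$), but a path leaving $\Omega$ can wind around one of the adjacent circles and escape to large length: I would take a {\sc csc} (or {\sc ccsc}-type) representative whose line-segment component exits $\Omega$ and then apply an operation of type II, pushing that segment outward with a disk of ever-increasing radius (Figure \ref{figopii}), which produces a bounded curvature homotopy through paths of unbounded length all lying outside $\Omega$. This exhibits a path in $\Delta'(\Omega)$ that is free, hence by definition $\Delta'(\Omega)$ is a free homotopy class — using here that $\Delta'(\Omega)$ is itself a single homotopy class, which again follows from Corollary \ref{mainresultp1} (it is one of the two parts of the partition) combined with Theorem \ref{mininhomot} and the classification of minimal paths, since all non-embedded or $\Omega$-escaping representatives of the given winding number reduce via operations on $cs$ paths to a common normal form.

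The main obstacle is the second bullet: one must verify that the pushing-out deformation genuinely stays within the bounded curvature class at every stage and never re-enters $\Omega$ — i.e. that the disk used in the operation of type II can be inflated monotonically while the curvature bound $\kappa\le 1$ is respected, which is exactly the kind of statement the machinery of Section \ref{section3} (Lemmas \ref{homotopycurvature}, \ref{conticurv} and Proposition \ref{homotopyfragment}) is designed to underwrite. A secondary subtlety is making sure the argument that $\Delta'(\Omega)$ is a \emph{single} class is clean; here I would lean on Theorem \ref{mininhomot} plus Remark \ref{minhomot} and Proposition \ref{skew}/Proposition \ref{homotopy2} to collapse all candidate minimal representatives (including the winding-number-zero and winding-number-one minimal paths of Figure \ref{figminext}, once one fixes $k$) into one. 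I expect the write-up to be short, mostly citing the imported results, with the one new ingredient being the explicit inflating homotopy.
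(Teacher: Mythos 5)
Your proposal is correct and follows essentially the same route as the paper: deform any path to a minimal length ({\sc csc}) representative via Theorem \ref{mininhomot}, use uniqueness of that minimal element to conclude each of $\Delta(\Omega)$ and $\Delta'(\Omega)$ is a single homotopy class, then obtain non-freeness of $\Delta(\Omega)$ from the length bound (Theorem \ref{insbound}, equivalently the last clause of Corollary \ref{mainresultp1}) and freeness of $\Delta'(\Omega)$ from the existence of arbitrarily long paths in it. The only differences are cosmetic: the paper treats the long-path construction as trivial rather than spelling out your type-II inflation, and your concern about the inflating homotopy re-entering $\Omega$ is unnecessary once $\Delta'(\Omega)$ is known to be a single class containing paths of unbounded length.
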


\begin{proof}  We first prove that $\Delta(\Omega)$ and $\Delta'(\Omega)$ each correspond to a homotopy class.  Consider a path $\gamma \in \Delta(\Omega)$ together with its homotopy class $[\gamma]$. By applying the procedure in Theorem \ref{mininhomot} to $\gamma$ we conclude that $[\gamma]$ must contain a minimal length element which must be is of type {\sc csc}. By virtue of Proposition 111 in \cite{thesisayala} the minimal length element is an element in $\Delta(\Omega)$ . Next consider a path $\delta \in \Delta(\Omega)$, different from $\gamma$. By applying Theorem \ref{mininhomot} to $\delta$ we conclude that $[\delta]$ must also contains a minimal length element. By Remark \ref{minuniq} the minimum length path is unique in its class. Therefore, $[\gamma]=[\delta]$. We conclude that $\Delta(\Omega)$ is a connected space.

To prove that $\Delta'(\Omega)$ corresponds to a homotopy class note that Corollary \ref {mainresultp1} establishes that paths in $\Delta(\Omega)$ and $\Delta'(\Omega)$ are not bounded-homotopic. Therefore, by applying the procedure in Theorem \ref{mininhomot} to $\gamma \in \Delta'(\Omega)$ we conclude that $[\gamma]$ must contain a minimal length element. Then via an analogous procedure as in the last paragraph we conclude that $\Delta'(\Omega)$ is connected.

By virtue of Theorem \ref{mininhomot} a homotopy class of bounded curvature paths contains a lower bound for the length of its elements. Then by applying Theorem \ref{insbound} we conclude that $\Delta(\Omega)$ is not a free homotopy class. In addition, note that it is trivial to construct paths in $\Delta'(\Omega)$ having arbitrary large length, showing that $\Delta'(\Omega)$ is a free homotopy class.
\end{proof}

The next result gives a classification of the homotopy classes for elements in $\Gamma(k)$ for any given endpoint condition.

\begin{theorem}\label{theoclassemb} Given $\mbox{\sc x},\mbox{\sc y}\in T{\mathbb R}^2$. Then:
\begin{itemize}
\item $\Gamma(\mbox{\rm  A},k)$, $\Gamma(\mbox{\rm  B},k)$ and $\Gamma(\mbox{\rm  C},k)$ are free homotopy classes. And,
\item $\Gamma(\mbox{\rm  D},k)$ consists of two homotopy classes:
\begin{enumerate}
\item $\Delta(\Omega)$  is a not a free homotopy class.
\item $\Delta'(\Omega)$  is a free homotopy class.
\end{enumerate}
 Or
\item $\Gamma(\mbox{\rm  D},k)$ correspond to a space of bounded curvature paths containing an isolated point consists of two homotopy classes: One corresponds to the isolated point. The other is a free homotopy class.
\end{itemize}
\end{theorem}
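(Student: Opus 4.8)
The plan is to treat the four proximity conditions in turn, using as the workhorse Theorem \ref{mininhomot} (every homotopy class contains a minimal length path) together with Remark \ref{minuniq} (uniqueness of the minimal length path up to operations on $cs$ paths) to reduce the problem to counting the minimal length elements in $\Gamma(k)$ and deciding which of them are bounded-homotopic to each other. The structure mirrors Theorem \ref{singlehomot}: for each proximity condition I would (a) invoke Theorem \ref{mininhomot} so that any class $[\gamma]$ in $\Gamma(\mbox{\rm P},k)$ contains a minimal length Dubins path, (b) list the possible types of that minimal Dubins path using Remark \ref{minhomot} and Theorem \ref{embdub}, and (c) show that any two such minimal paths are connected by an explicit bounded curvature homotopy built from operations of type I and II together with the explicit homotopies of Propositions \ref{skew} and \ref{homotopy2}. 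Once all minimal paths in a given $\Gamma(\mbox{\rm P},k)$ are shown to lie in a single class, that class is all of $\Gamma(\mbox{\rm P},k)$, and freeness is then settled by either exhibiting arbitrarily long paths (for A, B, C, and $\Delta'(\Omega)$) or invoking Theorems \ref{insbound} and \ref{mininhomot} (for $\Delta(\Omega)$, which has a length lower bound and an upper bound).

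For proximity condition A, by Remark \ref{minhomot}(1) the only Dubins paths are {\sc csc}, so the minimal path in each class is a {\sc csc} path; since $d(c_l(\mbox{\sc x}),c_l(\mbox{\sc y}))\geq 4$ and $d(c_r(\mbox{\sc x}),c_r(\mbox{\sc y}))\geq 4$, one can always lengthen the straight segment (or wrap extra loops consistently with $k$) to produce a path of arbitrary length, so $\Gamma(\mbox{\rm A},k)$ is a single free class; the four {\sc csc} words {\sc lsl}, {\sc lsr}, {\sc rsr}, {\sc rsl} with the same winding number are inter-homotopic by Proposition \ref{skew} and its mirror, plus operations of type I on the middle segment. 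For condition B, Remark \ref{minhomot}(2) says the minimal Dubins path is {\sc csc} but {\sc ccc} paths are also present; Proposition \ref{homotopy2} (left side of Figure \ref{figcomphop}) provides exactly the homotopy merging a {\sc ccc} path into the {\sc csc} class, and freeness follows as in case A (one of the two distance inequalities is $\geq 4$, giving room to elongate). Condition C is handled the same way, but now Remark \ref{minhomot}(3)–(4) allows the minimal Dubins path to be {\sc ccc} with middle arc of length $>\pi$; here Definition \ref{condcnoreg} guarantees a subpath that is an arc of length $\geq\pi$ or a segment of length $\geq 4$, which is precisely what is needed to run an operation of type I (unfolding the {\sc ccc} into {\sc csc}) and then to produce arbitrarily long paths, so $\Gamma(\mbox{\rm C},k)$ is again a single free class.

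For condition D there is the genuine dichotomy. If the endpoint condition carries a region $\Omega$, then $\Gamma(\mbox{\rm D},k) = \Delta(\Omega)\sqcup\Delta'(\Omega)$ by Corollary \ref{mainresultp1}, each piece is a single class by Theorem \ref{singlehomot}, $\Delta(\Omega)$ is non-free (Theorems \ref{insbound}, \ref{mininhomot}) and $\Delta'(\Omega)$ is free; here I would also use the right side of Figure \ref{figcomphop}/Proposition \ref{homotopy2} to confirm that the {\sc ccc} paths outside $\Omega$ collapse into $\Delta'(\Omega)$. If instead $\Gamma(\mbox{\rm D},k)$ contains a path that is a single unit-circular arc of length $<\pi$, or a concatenation of two oppositely oriented arcs of length $<\pi$ each, that path is rigid — it cannot be deformed at all without violating the curvature bound or the endpoint condition — so it is an isolated point forming its own class, while everything else reduces by Theorem \ref{mininhomot} to the unique non-degenerate minimal path and forms a second, free class. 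The main obstacle I anticipate is the last point: proving that the degenerate single-arc (or two-arc) path really is an isolated point, i.e. that no nontrivial bounded curvature homotopy emanates from it. This requires a careful local rigidity argument — any perturbation must keep $\gamma$ a $C^1$ bounded curvature path with the prescribed tangent data, and since the arc is already the unique shortest path of its winding number and lies at the extreme boundary of the admissible region $\mathcal R$, even an infinitesimal deformation forces the curvature above $1$ somewhere; making this precise, rather than the routine cataloguing of {\sc csc}/{\sc ccc} homotopies, is where the real work lies.
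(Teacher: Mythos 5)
Your proposal follows essentially the same route as the paper: reduce each $\Gamma(\mathrm{P},k)$ to its minimal length Dubins path via Theorem \ref{mininhomot} and Remark \ref{minuniq}, connect the possible minimal types by the explicit homotopies (operations of type I/II, Propositions \ref{skew} and \ref{homotopy2}), settle freeness by producing arbitrarily long paths for A, B, C and $\Delta'(\Omega)$ versus Theorems \ref{insbound}/\ref{mininhomot} for $\Delta(\Omega)$, and split condition D exactly as in Theorem \ref{singlehomot}. The one step you flag as genuinely open --- the rigidity of the single-arc and two-arc paths as isolated points --- is precisely the claim the paper does not prove here either, but instead cites as Propositions 114 and 115 of \cite{thesisayala}, so your account matches the paper's argument modulo that external reference.
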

\begin{proof} By an identical argument as that given in Theorem \ref{singlehomot} but now for $\Gamma(\mbox{\rm  A},k)$, we conclude that $\Gamma(\mbox{\rm  A},k)$ corresponds to a single path component. By applying an operation of type II to the minimal element in $\Gamma(\mbox{\rm  A},k) $, we conclude that $\Gamma(\mbox{\rm  A},k)$ is a free homotopy class.
For $\Gamma(\mbox{\rm  B},k)$ we proceed identically as for $\Gamma(\mbox{\rm  A},k)$, just noticing that Proposition \ref{homotopy2} ensures that the path $\gamma_1$ (for condition B) can be deformed to the minimal length path $\gamma_2$ in Figure \ref{figcomphop}. A similar argument applies for $\Gamma(\mbox{\rm C},k)$. Therefore we conclude that $\Gamma(\mbox{\rm B},k)$ and $\Gamma(\mbox{\rm C},k)$ are free homotopy classes.
The proof of the second item was given in Theorem \ref{singlehomot}.

In \cite{thesisayala} we proved in Proposition 114 and Proposition 115 that:
\begin{itemize}
\item A bounded curvature path consisting of a single arc of a unit circle of length less than $\pi$ is a homotopy class consisting of an isolated point in ${\Gamma}(\mbox{\sc x,y})$.
\item A bounded curvature path consisting of a concatenation of two arcs of unit circles each of length less than $\pi$ is a homotopy class consisting of an isolated point in ${\Gamma}(\mbox{\sc x,y})$.

\end{itemize}
 In particular, such a unique element is not a free path.

Now, the existence of a minimal length path in the complement of the isolated point, is identical as for $\Delta'(\Omega)$ in Theorem \ref{singlehomot}. Then, we conclude that the complement of the isolated point is free homotopy class. Of course the isolated point is of is not a free homotopy class.
\end{proof}

 Since $\Gamma(\mbox{\rm  A},k)$, $\Gamma(\mbox{\rm  B},k)$, $\Gamma(\mbox{\rm  C},k)$ and $\Delta'(\Omega)$ are free homotopy classes it is easy to see that their elements are bounded-homotopic to bounded curvature paths with self intersections. By contrast, we have the following result.

\begin{theorem}\label{hcembedded} The spaces $\Delta(\Omega)$ correspond to homotopy classes of embedded paths.
\end{theorem}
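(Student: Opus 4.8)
The plan is to read the statement off from Theorem \ref{theoclassemb} together with Corollary \ref{cannotsing}, with essentially no new computation. Fix an endpoint condition $\mbox{\sc x,y}\in T{\mathbb R}^2$ carrying a region $\Omega$. By Theorem \ref{theoclassemb} (equivalently, by Theorem \ref{singlehomot} together with Corollary \ref{mainresultp1}), $\Delta(\Omega)$ is one of the two path connected components comprising $\Gamma(\mbox{\rm D},k)$; in particular $\Delta(\Omega)$ is itself a homotopy class, so for every $\gamma\in\Delta(\Omega)$ we have $[\gamma]=\Delta(\Omega)$. Since $\Delta(\Omega)$ was defined to consist of embedded paths, the only thing left to check is that this homotopy class contains no non-embedded element, i.e. that no bounded curvature path with self intersections is bounded-homotopic to a path of $\Delta(\Omega)$.

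That last point is precisely Corollary \ref{cannotsing}. Spelled out: let $\gamma\in\Delta(\Omega)$ and suppose $\eta$ is bounded-homotopic to $\gamma$, witnessed by a bounded curvature homotopy $H$. If $\eta$ were not embedded it would already be a bounded curvature path carrying a self intersection (and, after an arbitrarily small bounded curvature perturbation which by Remark \ref{chi} stays inside $[\gamma]$, we may even take that self intersection to be transversal); composing $H$ with this perturbation would then exhibit $\gamma\in\Delta(\Omega)$ as bounded-homotopic to a bounded curvature path with self intersections, contradicting Corollary \ref{cannotsing}. Hence $\eta$ is embedded. Letting $\eta$ range over $[\gamma]=\Delta(\Omega)$ shows that every element of the homotopy class $\Delta(\Omega)$ is embedded, which is the assertion; since the argument is uniform in the endpoint condition, it applies to every such space $\Delta(\Omega)$.

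The only substantive ingredient is Corollary \ref{cannotsing}, which is imported from \cite{paperc} and which I would not reprove here; for context, its proof rests on the length bound Theorem \ref{insbound} for paths in $\Delta(\Omega)$ together with the classification of minimal length representatives in $\Gamma(n)$ from Theorem \ref{singudub}. The rough mechanism is that a path in $\Delta(\Omega)$ has winding number $\pm1$, whereas forcing a transversal self intersection produces a $\mbox{\sc c}^{\chi}$-type representative (Theorem \ref{singudub}) whose length is strictly larger and which cannot be confined to the compact region $\Omega$; hence a crossing cannot be created along a bounded curvature homotopy without leaving $\Delta(\Omega)$, so $\Delta(\Omega)$ is \emph{rigidly} embedded. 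The main obstacle to a fully self contained treatment would therefore be re-deriving this length-versus-escape dichotomy inside $\Omega$; granting Corollary \ref{cannotsing}, the proof above is immediate, which is presumably why the statement appears here as a short consequence.
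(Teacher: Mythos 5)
Your proposal is correct and takes essentially the same route as the paper: the published proof likewise combines Theorem \ref{theoclassemb} (that $\Delta(\Omega)$ is a homotopy class) with Corollary \ref{cannotsing} (that paths in $\Delta(\Omega)$ cannot be made bounded-homotopic to paths with self intersections). Your extra perturbation step via Remark \ref{chi} is harmless but unnecessary, since Corollary \ref{cannotsing} already excludes arbitrary (not just transversal) self intersections.
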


\begin{proof} In Theorem \ref{theoclassemb} we proved that the spaces $\Delta(\Omega)$ are homotopy classes. In Corollary \ref{cannotsing} we proved that embedded bounded curvature paths in $\Omega$ cannot be made bounded-homotopic to bounded curvature paths with self intersections concluding the proof.
\end{proof}

\section{The Classification Theorem in $\Gamma(n)$}

The Graustein-Whitney theorem states that two planar closed curves with the same winding number may be deformed one into another by a regular homotopy (homotopy through immersions). Conversely, we cannot deform two closed curves of distinct winding numbers one into another by such a homotopy compare \cite{whitney}. From this observation we see that if two paths have different winding numbers, they are in different path components (see Corollary \ref{wcpm}).

Recall that $\Gamma(k)$ correspond to the space in $\Gamma(\mbox{\sc x,y})$ containing the global minimum of length. Next we prove that each of the spaces $\Gamma(n)$ for $n\neq k$ contains only one connected component.

\begin{theorem} The spaces $\Gamma(n)$ correspond to a single homotopy class for $n\neq k$.
\end{theorem}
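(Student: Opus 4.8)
The plan is to reduce every space $\Gamma(n)$ with $n\neq k$ to one of the "free" cases already handled in Theorem \ref{theoclassemb}, and to show that the exceptional scenario of Theorem \ref{theoclassemb} (proximity condition {\rm D} with a region $\Omega$, or with an isolated-point element) cannot occur for $n\neq k$. The key observation, already recorded in the excerpt, is that in \cite{thesisayala} (Theorem 110 and Lemma 113) the global minimal length path, and in particular every element of $\Delta(\Omega)$ and every isolated-point element, has winding number in $\{-1,0,1\}$, and moreover the winding number of the global minimum is exactly $k\in\{-1,0,1\}$. So the problematic homotopy classes — the non-free class $\Delta(\Omega)$ and the isolated-point class — always live in $\Gamma(k)$, never in $\Gamma(n)$ for $n\neq k$.

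The steps I would carry out, in order, are as follows. First, fix $n\neq k$ and take any $\gamma\in\Gamma(n)$. By Theorem \ref{mininhomot}, the homotopy class $[\gamma]$ contains a minimal length element, which by the reduction process (Theorem \ref{embdub}, Theorem \ref{singudub}) is a $cs$ path of a controlled form. Second, I would argue that $\gamma$ cannot lie in $\Delta(\Omega)$: if the endpoint condition carries a region $\Omega$, then by Lemma 113 in \cite{thesisayala} any path in $\Delta(\Omega)$ has winding number $\pm1$, and by Theorem 110 these winding numbers coincide with $k$; hence $\Delta(\Omega)\subset\Gamma(k)$, so for $n\neq k$ we have $\Gamma(n)\cap\Delta(\Omega)=\emptyset$ and therefore $\Gamma(n)\subseteq\Delta'(\Omega)$, which Theorem \ref{singlehomot} shows is a single (free) homotopy class. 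Third, I would similarly rule out the isolated-point scenario: a single short arc or a concatenation of two short arcs has winding number in $\{-1,0,1\}$ and, being the global minimum in its configuration, has winding number $k$; so for $n\neq k$ no element of $\Gamma(n)$ is such an isolated point, and the argument of Theorem \ref{theoclassemb} for the complement of the isolated point applies to show $\Gamma(n)$ is a single free class. Finally, when the endpoint condition satisfies proximity condition {\rm A}, {\rm B}, or {\rm C}, Theorem \ref{theoclassemb} already gives that $\Gamma(n)$ (which is the same type of space as $\Gamma(k)$ for those conditions, just with a different winding number) is a single free homotopy class; the operation-of-type-II and Proposition \ref{homotopy2} arguments used there do not depend on the value of the winding number, only on the proximity condition, so they transfer verbatim.

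Concretely, the proof amounts to showing: (1) for every $\mbox{\sc x,y}\in T{\mathbb R}^2$ and every $n$, $\Gamma(n)$ is nonempty only for finitely many $n$, and among these the non-free or isolated-point behavior is confined to $n=k$; and (2) for all other $n$, the connectedness argument of Theorem \ref{theoclassemb} — reduce to a $cs$ path via Theorem \ref{mininhomot}, then to the unique (up to operations on $cs$ paths, Remark \ref{minuniq}) minimal length path via Proposition \ref{lengthred}, then observe that any two elements of $\Gamma(n)$ reduce to the same minimal path — goes through unchanged. The winding number is preserved at every stage of these deformations because the fragmentation, normalization, and reduction processes were all designed in \cite{papera}, \cite{paperb} to leave the winding number invariant (this is emphasized in the introduction and in Section \ref{section3}).

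The main obstacle I anticipate is precisely the bookkeeping that the non-free and isolated-point phenomena are pinned to $n=k$. This rests on the results imported from \cite{thesisayala} (Theorems 110, 113, Propositions 111, 114, 115) identifying the winding number of the relevant extremal paths; once those are in hand the argument is a direct transcription of Theorem \ref{theoclassemb}. A secondary subtlety is making sure that when proximity condition {\rm D} holds \emph{without} a region $\Omega$ and \emph{without} an isolated-point element — i.e.\ the third bullet of Remark \ref{condcconf}, where the endpoint condition is bounded-homotopic to a path of arbitrary length — the space $\Gamma(n)$ is genuinely free for \emph{every} $n$ including $n=k$; here one simply notes that freeness plus the uniqueness of the reduced minimal path forces a single class, exactly as in the $\Delta'(\Omega)$ portion of Theorem \ref{singlehomot}. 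I would therefore split the proof into the three cases (proximity {\rm A}/{\rm B}/{\rm C}; proximity {\rm D} with $\Omega$ or isolated point; proximity {\rm D} free) and in each case invoke Theorem \ref{mininhomot}, Proposition \ref{lengthred}, Remark \ref{minuniq}, and the winding-number restriction to conclude $\Gamma(n)$ is a single homotopy class.
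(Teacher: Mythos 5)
Your proposal follows essentially the same route as the paper: fragment the path and apply the replacement-path homotopies to reach a $cs$ path, invoke Theorem \ref{mininhomot} to obtain a minimal length element, and then repeat the uniqueness argument of Theorem \ref{singlehomot} to conclude that $\Gamma(n)$ is a single class. Your additional bookkeeping — that the $\Delta(\Omega)$ and isolated-point phenomena are confined to winding number $k$ via the results cited from \cite{thesisayala} — is left implicit in the paper's proof but is consistent with it and only makes the argument more explicit.
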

\begin{proof} Consider a path $\gamma \in \Gamma(n)$ and a fragmentation whose fragments have length at most $\delta>0$ (small), see Proposition \ref{homotopyfragment}. Then apply Proposition \ref{homotopyfragment} to obtain a $cs$ in $[\gamma]$. Then proceed to bounded-homotope all the components of type ${\mathscr C}_3$ (if there is any) and then collapse all the loops to one of the adjacent circles as we did in Theorem \ref{singudub}. We apply Theorem \ref{mininhomot} to prove the existence of a minimal length element. Then by an identical argument as in Theorem \ref{singlehomot} we conclude that $\Gamma(n)$ corresponds to a single homotopy class. \end{proof}

We have proven the following result.

\begin{theorem} \label{classificationbcps}Given $\mbox{\sc x},\mbox{\sc y}\in T{\mathbb R}^2$ we have that:

\begin{equation}
  \Gamma(\mbox{\sc x},\mbox{\sc y}) =
       \bigcup_{\substack{ n\in {\mathbb Z}\\
                 \\
                  }}
         \Gamma(n)
\end{equation}

In particular, if $\mbox{\sc x},\mbox{\sc y}\in T{\mathbb R}^2$ satisfies condition {\rm D} we have that:

 $$\Delta(\Omega) \cup \Delta'(\Omega)=\Gamma(k)$$
 with,
 $$\Delta(\Omega) \cap \Delta'(\Omega)=\emptyset $$
  or, $\Gamma(k)$ corresponds to the union of a free homotopy class and an isolated point.
  \end{theorem}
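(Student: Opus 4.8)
The plan is to assemble Theorem \ref{classificationbcps} entirely from the structural results already established, so the proof is essentially a bookkeeping argument with no new geometry. First I would dispose of the decomposition $\Gamma(\mbox{\sc x},\mbox{\sc y})=\bigcup_{n\in{\mathbb Z}}\Gamma(n)$: every bounded curvature path has a well-defined winding number $W_\lambda$ with respect to the fixed closure path $\lambda$ (Remark \ref{convlambda}, Definition of $W_\lambda$), so the $\Gamma(n)$ cover $\Gamma(\mbox{\sc x},\mbox{\sc y})$; disjointness is Corollary \ref{wcpm}, which itself is the Graustein--Whitney invariance of winding number under regular homotopy (and hence under bounded curvature homotopy, which is a regular homotopy). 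Thus the first displayed equation is immediate and needs only one or two sentences.

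Next I would record the count of homotopy classes inside each $\Gamma(n)$. For $n\neq k$ the preceding theorem shows $\Gamma(n)$ is a single homotopy class. For $n=k$ I would split into cases on the proximity condition satisfied by $\mbox{\sc x},\mbox{\sc y}$: by Theorem \ref{theoclassemb}, if the endpoint condition satisfies A, B, or C then $\Gamma(k)$ is a single (free) homotopy class; if it satisfies D and carries a region $\Omega$, then $\Gamma(k)=\Delta(\Omega)\cup\Delta'(\Omega)$ with the two pieces disjoint homotopy classes (one non-free, one free) by Theorem \ref{singlehomot} and Corollary \ref{mainresultp1}; and if it satisfies D but is the endpoint condition of an arc of a unit circle of length less than $\pi$ or of a concatenation of two oppositely oriented such arcs, then $\Gamma(k)$ is the union of an isolated point and a free homotopy class, again by Theorem \ref{theoclassemb} (via Propositions 114 and 115 of \cite{thesisayala}). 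These three D-subcases are exactly the three mutually exclusive items of Definition \ref{defd}, so they are exhaustive and non-overlapping once condition D holds.

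I would then combine: $\Delta(\Omega)\cup\Delta'(\Omega)=\Gamma(k)$ and $\Delta(\Omega)\cap\Delta'(\Omega)=\emptyset$ follow directly from Corollary \ref{mainresultp1} applied to the space $\Gamma(k)$ when the region $\Omega$ is present, and the alternative ``$\Gamma(k)$ is the union of a free homotopy class and an isolated point'' is the remaining D-subcase; the non-D cases give $\Gamma(k)$ a single class, which is subsumed under the stated dichotomy only when D holds, so the ``In particular, if \dots condition {\rm D}'' clause is precisely where the two-component phenomenon can occur. It is worth remarking explicitly that for a fixed $\mbox{\sc x},\mbox{\sc y}$ satisfying D, only the single value $n=k$ gives two components (Lemma 113 of \cite{thesisayala} forces $W_\lambda(\gamma)\in\{-1,1\}$ for $\gamma\in\Delta(\Omega)$, and Theorem 110 of \cite{thesisayala} puts the global minimum in $\Gamma(\mbox{\rm D},k)$ with $k=\pm 1$), which justifies the ``unique choice of $k$'' claim from the introduction; I would include this as the closing sentence.

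The argument has no single hard step — everything is invoked from earlier results — but the point requiring the most care is verifying that the three items of Definition \ref{defd} genuinely exhaust all ways condition D can be realized, so that the case analysis for $\Gamma(k)$ is complete; the excerpt already asserts these items are mutually exclusive, and one should add a line confirming that an endpoint condition satisfying (iv) must fall into one of them (either a region $\Omega$ is forced by the curvature bound, or it is not, in which case Remark \ref{condcconf} gives that $\mbox{\sc x},\mbox{\sc y}$ is the endpoint condition of a single short arc or a concatenation of two short arcs, or else condition C rather than D holds). Since the final statement of the theorem is phrased as ``we have proven the following result,'' I would keep the write-up to a short synthesis of these citations rather than reproving anything.

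\begin{proof}
The equality $\Gamma(\mbox{\sc x},\mbox{\sc y})=\bigcup_{n\in{\mathbb Z}}\Gamma(n)$ is immediate: every bounded curvature path $\gamma$ has a winding number $W_\lambda(\gamma)\in{\mathbb Z}$ with respect to the fixed closure path $\lambda$, so the $\Gamma(n)$ cover $\Gamma(\mbox{\sc x},\mbox{\sc y})$, and they are pairwise disjoint by Corollary \ref{wcpm}. Moreover, since a bounded curvature homotopy is in particular a regular homotopy, the Graustein--Whitney theorem shows that distinct $\Gamma(n)$ lie in distinct homotopy classes.

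It remains to count the homotopy classes of each $\Gamma(n)$. For $n\neq k$, the space $\Gamma(n)$ is a single homotopy class by the theorem preceding this one. For $n=k$, we distinguish according to the proximity condition satisfied by $\mbox{\sc x},\mbox{\sc y}$. If the endpoint condition satisfies A, B, or C, then by Theorem \ref{theoclassemb} the space $\Gamma(k)$ is a single (free) homotopy class, so the asserted description holds trivially. Suppose now the endpoint condition satisfies (iv), i.e.\ proximity condition D is in play. By Remark \ref{condcconf}, exactly one of the following occurs: the curvature bound induces a region $\Omega$; or $\mbox{\sc x},\mbox{\sc y}$ is the endpoint condition of a single arc of a unit circle of length less than $\pi$, or of a concatenation of two oppositely oriented arcs of unit circles each of length less than $\pi$; these are precisely the three mutually exclusive items of Definition \ref{defd}.

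In the first case, Corollary \ref{mainresultp1} applied to $\Gamma(k)$ gives the partition
$$\Gamma(k)=\Delta(\Omega)\cup\Delta'(\Omega),\qquad \Delta(\Omega)\cap\Delta'(\Omega)=\emptyset,$$
and Theorem \ref{singlehomot} shows that $\Delta(\Omega)$ and $\Delta'(\Omega)$ are homotopy classes, the former not free and the latter free. In the remaining two cases, Theorem \ref{theoclassemb} shows that $\Gamma(k)$ is the union of an isolated point (a non-free homotopy class, by Propositions 114 and 115 of \cite{thesisayala}) and a free homotopy class. This establishes both alternatives in the statement.

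Finally, for fixed $\mbox{\sc x},\mbox{\sc y}$ satisfying condition D, the two-component phenomenon occurs only for $n=k$: by Theorem 110 of \cite{thesisayala} the global minimum of length lies in $\Gamma(\mbox{\rm D},k)$ with $k\in\{-1,1\}$, and by Lemma 113 of \cite{thesisayala} any $\gamma\in\Delta(\Omega)$ has $W_\lambda(\gamma)=\pm 1=k$, so $\Delta(\Omega)\subset\Gamma(k)$; for every other $n$ the space $\Gamma(n)$ is a single homotopy class by the theorem preceding this one. This completes the proof.
\end{proof}
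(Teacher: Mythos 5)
Your proposal is correct and follows essentially the same route as the paper: the paper offers no separate argument for Theorem \ref{classificationbcps} beyond the sentence ``We have proven the following result,'' since the theorem is exactly the synthesis of Corollary \ref{wcpm}, the theorem on $\Gamma(n)$ for $n\neq k$, Theorem \ref{theoclassemb} and Theorem \ref{singlehomot}, which is what you assemble. Your write-up even inherits the paper's own mild abuse of notation (reading $\Delta'(\Omega)$ as its winding-number-$k$ part so that $\Delta(\Omega)\cup\Delta'(\Omega)=\Gamma(k)$), and your closing remark on the uniqueness of $k$ via Theorem 110 and Lemma 113 of \cite{thesisayala} matches the paper's Section 4 discussion.
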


\bibliographystyle{amsplain}
   
\end{document}